\newcommand{\titre}{A quadratic Poisson Gel'fand-Kirillov problem in prime characteristic}
\newtheorem{thm}{Theorem}[section]
\newtheorem{lem}[thm]{Lemma}
\newtheorem{prop}[thm]{Proposition}
\theoremstyle{definition}
\newtheorem{Def}[thm]{Definition}
\newtheorem{rems}[thm]{Remarks}
\newtheorem{ex}[thm]{Example}
\newtheorem{hypo}{Hypothesis}[subsection]
\newtheorem*{nota}{Notation}
\newcommand{\Z}{\mathbb{Z}}
\newcommand{\C}{\mathbb{C}}
\newcommand{\K}{\mathbb{K}}
\newcommand{\LL}{\mathbb{L}}
\newcommand{\al}{\alpha}
\newcommand{\be}{\beta}
\newcommand{\gam}{\gamma}
\newcommand{\de}{\delta}
\newcommand{\la}{\lambda}
\newcommand{\bo}{\boldsymbol}
\newcommand{\F}{\mbox{\rm Frac\,}}
\newcommand{\Sp}{\mbox{\rm Spec\,}}
\newcommand{\car}{\mbox{\rm char\,}}
\newcommand{\id}{\mbox{\rm id}}
\newcommand{\D}{\mbox{\rm Der}}
\newcommand{\ov}{\overline}
\newcommand{\ul}{\underline}
\begin{document}

\title{\titre} \author{S Launois\thanks{The first author is grateful for the
financial support of EPSRC first grant \textit{EP/I018549/1}.}~ and C Lecoutre\thanks{The second author thanks EPSRC for its support.}}

\date{}

\maketitle

\begin{center}
\textit{School of Mathematics, Statistics and Actuarial Science (SMSAS), Cornwallis Building, University of Kent, Canterbury, Kent CT2 7NF, United Kingdom}
\end{center}
\begin{center}
{\tt S.Launois@kent.ac.uk} and {\tt cl335@kent.ac.uk}
\end{center}


\begin{abstract}\footnotesize 
The quadratic Poisson Gel'fand-Kirillov problem asks whether the field of fractions of a Poisson algebra is Poisson birationally equivalent to a Poisson affine space, i.e. to a polynomial algebra $\K[X_1, \dots , X_n]$ with Poisson bracket defined by $\{X_i,X_j\}=\lambda_{ij} X_iX_j$ for some skew-symmetric matrix $(\lambda_{ij}) \in M_n(\K)$. This problem was studied in \cite{GL} over a field of characteristic $0$ by using a Poisson version of the deleting derivation homomorphism of Cauchon. In this paper, we study the quadratic Poisson Gel'fand-Kirillov problem over a field of arbitrary characteristic. In particular, we prove that  the quadratic Poisson Gel'fand-Kirillov problem is satisfied for a large class of Poisson algebras arising as semiclassical limits of quantised coordinate rings. For, we introduce the concept of {\it higher Poisson derivation} which allows us to extend the Poisson version of the deleting derivation homomorphism from the characteristic 0 case to the case of arbitrary characteristic.

When a torus is acting rationally by Poisson automorphisms on a Poisson polynomial algebra arising as the semiclassical limit of a quantised coordinate ring, we prove (under some technical assumptions) that quotients by Poisson prime torus-invariant ideals also satisfy the quadratic Poisson Gel'fand-Kirillov problem. In particular, we show that coordinate rings of determinantal varieties satisfy the quadratic Poisson Gel'fand-Kirillov problem.

\end{abstract}


\vskip .5cm
\noindent
{\em 2010 Mathematics subject classification:} 17B63, 20G42

\vskip .5cm
\noindent
{\em Key words:} Poisson Algebra; Gel'fand-Kirillov Conjecture; prime characteristic

\section{Introduction}

Let $\K$ be a field. Recall that a Poisson $\K$-algebra is a commutative algebra endowed with a Poisson bracket, i.e. a skew-symmetric $\K$-bilinear map from $A\times A$ to $A$ satisfying the Jacobi identity and the Leibniz rule. Assuming that $A$ is a domain, we can uniquely extend the Poisson bracket to the field of fractions $\F A$ of $A$. This article is concerned with the Poisson structure of the field of fractions of Poisson polynomial algebras. Examples of Poisson polynomial algebras include the so-called {\it Poisson-Weyl algebras}. Recall that the Poisson-Weyl algebra of dimension $2k$ is the polynomial algebra in $2k$ generators $X_1,\dots,X_k,Y_1,\dots,Y_k$ endowed with the Poisson bracket defined on the generators by $\{X_i,X_j\}=\{Y_i,Y_j\}=0$ and $\{X_i,Y_j\}=\de_{ij}$ for all $i,j$. The field of fractions of this Poisson algebra is referred to as the {\it Poisson-Weyl field of dimension $2k$}. It is a central object in the theory, and often, for a given Poisson polynomial algebra, one tries to decide whether it is Poisson birationally equivalent to a Poisson-Weyl algebra, that is we would like to know whether there exists a Poisson isomorphism between the field of fractions of the given Poisson polynomial algebra and a Poisson-Weyl field of appropriate dimension.

This problem was first raised by Vergne in \cite{Ver},  where the author studied the case of the symmetric algebra $S(\mathfrak{g})$ of a finite dimensional Lie algebra $\mathfrak{g}$ over a field $\LL$ of characteristic $0$, the polynomial algebra $S(\mathfrak{g})$ being endowed with the so-called {\it Kirillov-Kostant-Souriau Poisson structure}: for a basis $U_1,\dots,U_n$ of $\mathfrak{g}$, the Poisson bracket on $S(\mathfrak{g})$ is given by $\{U_i,U_j\}=[U_i,U_j]_{\mathfrak{g}}$ for all $i,j$.  When $\mathfrak{g}$ is nilpotent, Vergne showed that the field of fractions of $S(\mathfrak{g})$ is Poisson isomorphic to the field of fractions of a Poisson-Weyl algebra over a purely transcendental extension of $\LL$. In \cite{TV}, this result was extended to the solvable case by Tauvel and Yu. Moreover, still assuming $\mathfrak{g}$ is solvable, they proved that this result also holds for any quotient of $S(\mathfrak{g})$ by a Poisson prime ideal.
	
The problem raised by Vergne takes its roots in the celebrated Gel'fand-Kirillov Conjecture \cite{GK} which is a problem of birational equivalence between enveloping algebras of Lie algebras and Weyl skew-fields. This conjecture was first proved to fail in general by Alev-Ooms-Van den Bergh \cite{contrex}. See \cite[1]{Pr} for a survey of the results concerning this conjecture. Note that the algebras involved are considered over algebraically closed fields of characteristic zero. However, the conjecture also makes sense in positive characteristic, see for instance \cite{JMB}. In \cite{Pr}, the author refutes the Gel'fand-Kirillov Conjecture for the enveloping algebra of simple Lie algebras of certain types by actually refuting a modular version of the conjecture. This certainly shows that one should not restrict our attention only to the case where the characteristic is $0$, but also study the modular case. This motivated us to study the Poisson structure of fields of fractions of Poisson polynomial algebras over a field of arbitrary characteristic.

With the appearance of quantum groups in the eighties, new skew-fields of reference were needed, and a quantum version of the Gel'fand-Kirillov Conjecture was proposed by  Alev and Dumas \cite{AD}, and studied by numerous authors. We refer to \cite[I.2.11 and II.10.4]{BG} for information about this quantum version of the Gel'fand-Kirillov Conjecture.  In this context, skew-fields of reference are the skew-fields of fractions of quantum affine spaces.
	
Back to the Poisson setting, it is easy to build Poisson polynomial algebras whose fields of fractions are not Poisson isomorphic to Poisson-Weyl algebras. And so, as in the quantum case, we need to introduce other Poisson fields of reference as follows. A {\it Poisson affine field} is the field of fractions of a Poisson affine space, i.e. the field of fractions of a polynomial algebra in $n$ indeterminates $X_1,\dots,X_n$, with Poisson bracket given by $\{X_i,X_j\}=\la_{ij}X_iX_j$ for some skew-symmetric matrix $(\lambda_{ij}) \in M_n(\K)$. It was proved in \cite{GL} that Poisson-Weyl fields and Poisson affine fields are not isomorphic, so that Poisson affine fields were used in \cite{GL} as fields of reference for a Poisson version of the quantum Gel'fand-Kirillov Conjecture. Namely, the {\it quadratic Poisson Gel'fand-Kirillov problem} asks whether a given Poisson polynomial algebra is Poisson birationally equivalent to a Poisson affine space.  In \cite{GL}, it was shown that the fields of fractions of a large class of Poisson algebras are Poisson isomorphic to Poisson affine fields (over purely transcendental extensions of the base field). The method used to prove these Poisson isomorphisms is based on a Poisson version of the \textit{deleting derivation homomorphism} introduced by Cauchon in \cite{Cau} in order to prove the quantum Gel'fand-Kirillov Conjecture for a large class of noncommutative algebras, the so-called \textit{CGL extensions}. We note that, while Cauchon's deleting derivation homomorphism cannot be defined when the quantum parameter involved is a root of unity, Haynal \cite{HH} generalised Cauchon's construction to the root of unity case by using the notion of {\it higher derivation}.

The main aim of this paper is to establish the quadratic Poisson Gel'fand-Kirillov problem for a large class of Poisson polynomial algebras (and their quotients) over a field of arbitrary characteristic. Before explaining our strategy to attack this problem, we first give details on the Poisson algebras under consideration. 

Let $B$ be a Poisson algebra and $\alpha$ a Poisson derivation of $B$. 
Suppose that $\delta$ is a derivation on $B$ such that
$$\delta(\{a,b\})= \{\delta(a),b\}+ \{a,\delta(b)\}+ \alpha(a)\delta(b)-
\delta(a)\alpha(b)$$
for $a,b\in B$. By \cite[Theorem 1.1]{Oh} (after replacing our $B$
and $\alpha$ with $A$ and $-\alpha$), the Poisson structure on $B$ extends
uniquely to a Poisson algebra structure on the polynomial ring $A= B[X]$
such that
$$\{X,b\}= \alpha(b)X+ \delta(b)$$
 for $b\in B$. We write $A=
B[X;\alpha,\delta]_P$ to denote this situation, and we refer to $A$ as a
{\it Poisson-Ore extension} over $B$. In this article, we study iterated Poisson-Ore extensions. More precisely, we are concerned with polynomial algebras in several indeterminates $X_1,\dots,X_n$ over the base field $\K$, with Poisson bracket given by
	\[\{X_i,X_j\}=\la_{ij}X_iX_j+P_{ij} \ \ \  (j<i)\] 
where $(\lambda_{ij}) \in M_n(\K)$ is a skew-symmetric matrix and $P_{ij}$ is a polynomial in $X_1,\dots,X_{i-1}$ for all $j<i$. These Poisson algebras can be presented as iterated Poisson-Ore extensions over $\K$ of the form $\K[X_1][X_2;\alpha_2,\delta_2]_P \dots [X_n;\alpha_n,\delta_n]_P$. 
Examples of such Poisson algebras include for instance the semiclassical limits of quantum matrices, quantum  symplectic or euclidean spaces, quantum symmetric or antisymmetric matrices, etc.

In characteristic zero, the main tool used in \cite{GL} to establish the quadratic Poisson Gel'fand-Kirillov problem for a large class of Poisson polynomial algebras (and their quotients) is the so-called Poisson deleting derivation homomorphism. This homomorphism is a Poisson algebra isomorphism between localisations of two Poisson-Ore extensions:
\begin{align*}
	F:A[Y^{\pm1};\al]_P&\stackrel{\cong}\longrightarrow A[X^{\pm1};\al,\de]_P\\
	A\ni a \qquad &\longmapsto \sum_{i\geq0}\Big(\frac{-1}{s}\Big)^i\frac{\de^i(a)}{i!}X^{-i},\\
	Y\qquad &\longmapsto X.
\end{align*}
under the assumptions that the derivation $\delta$ is locally nilpotent and $\alpha \delta = \delta (\alpha + s)$ for some $s \in \K^{\times}$. 

Obviously the above formula defining the Poisson deleting derivation homomorphism does not make sense in positive characteristic due to the division by $i!$. To overcome this problem and define a characteristic-free Poisson deleting derivation homomorphism, we observe that the sequence of linear maps $\big(\frac{\de^i}{i!}\big)$ is a so-called iterative higher derivation (see Definition \ref{hd}) which extends the derivation $\delta$ (that is, whose first terms are $\mathrm{id}$ and $\delta$). In Section 2, we construct a characteristic-free Poisson deleting derivation homomorphism in the case where the derivation $\delta$ extends to a so-called iterative {\it higher Poisson derivation}, i.e. an iterative higher derivation compatible with the Poisson structure (see Section 2.2). We also study the compatibility of this characteristic-free Poisson deleting derivation homomorphism with torus actions (Section 2.4). In Section 3, we use the characteristic-free Poisson deleting derivation homomorphism repeatedly to prove that the quadratic Poisson Gel'fand-Kirillov problem holds for a large class of iterated Poisson-Ore extensions. We actually prove a stronger result  by considering also Poisson prime quotients. More precisely, we show that if $P$ is a Poisson prime ideal of a Poisson polynomial algebra $A$ to which our construction applies,  then there exists a Poisson prime ideal $Q$ in a Poisson affine space $B$ such that $\F (A/P)\cong \F (B/Q)$ as Poisson algebras (this proves the quadratic Poisson Gel'fand-Kirillov problem for $A$ since $Q=0$ when $P=0$). Additionally, if a torus $H$ is acting rationally by Poisson automorphisms on $A$ and if $P$ is invariant under this action, we show, modulo some technical assumptions, that the ideal $Q$ of the Poisson affine space $B$ is also invariant under the induced torus action on $B$. Under certain mild assumptions on the base field, we prove that $B$ has only finitely many $H$-invariant Poisson prime ideals and that they are all generated by some of its  generators. As a consequence, when $P$ is $H$-invariant, the quotient $B/Q$ is a Poisson affine space, so that the quotient $A/P$ also satisfies the quadratic Poisson Gel'fand-Kirillov problem. In other words, our main theorem reads as follows.

\begin{thm}[Theorem \ref{all hypo}]
	Let $A=K[X_1][X_2;\alpha_2,\delta_2]_P \dots [X_n;\alpha_n,\delta_n]_P$ be an iterated Poisson-Ore extension over a field $\K$ of arbitrary characteristic. Assume that the torus $H=(\K^{\times})^r$ acts rationally by Poisson automorphisms on $A$ (and that the hypotheses of Theorem \ref{abc} and Hypothesis \ref{hyp} are satisfied). Then, for any $H$-invariant Poisson prime ideal $P$ of $A$, the field $\F A/P$ is Poisson isomorphic to a Poisson affine field.
\end{thm}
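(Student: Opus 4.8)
The plan is to reduce the statement to the two technical engines built in Sections~2 and~3: the characteristic-free Poisson deleting derivation homomorphism, and its compatibility with torus actions. First I would apply the iterated deleting derivation process (Theorem~\ref{abc}, whose hypotheses are assumed) to the iterated Poisson-Ore extension $A$. At each step, localising at the relevant variable and using the Poisson deleting derivation isomorphism $F$ of Section~2, one passes from $A$ to a Poisson polynomial algebra in which the correction polynomials $P_{ij}$ have been stripped away; after $n$ such steps one obtains a Poisson affine space $B=\K[T_1,\dots,T_n]$ with $\{T_i,T_j\}=\la_{ij}T_iT_j$, together with a Poisson isomorphism between suitable localisations of $A$ and $B$. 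In particular $\F A \cong \F B$ as Poisson algebras, which is the $P=0$ case.

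Next I would track the Poisson prime ideal $P$ through this chain of isomorphisms. Since each deleting derivation homomorphism is a Poisson isomorphism on localisations, it induces a bijection between Poisson prime ideals of the respective localisations not meeting the relevant multiplicative set; by the standard going-up/going-down bookkeeping (exactly as in Cauchon's original combinatorics, and its Poisson analogue from~\cite{GL}), $P$ corresponds to a Poisson prime ideal $Q$ of $B$ with $\F(A/P)\cong\F(B/Q)$ as Poisson algebras. So far no torus action has been used, and this already yields the statement with ``Poisson affine field'' replaced by ``$\F(B/Q)$''.

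Now I would bring in the torus $H=(\K^\times)^r$. Using the compatibility of the characteristic-free deleting derivation homomorphism with rational torus actions (established in Section~2.4), the induced action of $H$ on $B$ is again rational by Poisson automorphisms, and because each step respects the $H$-action, the ideal $Q$ obtained above is $H$-invariant whenever $P$ is (this is where Hypothesis~\ref{hyp} enters, to ensure the $H$-eigenvector structure is preserved under the localisations and the twisting by the $\de^i$). It then remains to analyse $H$-invariant Poisson prime ideals of the Poisson affine space $B$ itself: under the mild hypotheses on $\K$ (referenced in the introduction as part of the assumptions), one shows $B$ has only finitely many $H$-invariant Poisson primes and each is generated by a subset of the variables $T_1,\dots,T_n$. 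Hence $B/Q\cong\K[T_i : i\notin S]$ for some $S$, with induced bracket $\{T_i,T_j\}=\la_{ij}T_iT_j$ — that is, $B/Q$ is itself a Poisson affine space. Combining with $\F(A/P)\cong\F(B/Q)$ gives the result.

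The main obstacle I anticipate is the bookkeeping in the second and third paragraphs: verifying that $H$-invariance of the eigenvector decomposition genuinely survives each localisation-and-twist step of the deleting derivation procedure, i.e. that the higher Poisson derivations $\de^{(i)}$ are compatible with the $H$-grading in the way required, and that no ``new'' scalars are introduced that would force a transcendental extension incompatible with the torus action. The classification of $H$-invariant Poisson primes of $B$ as generated-by-variables is the other delicate point, since in prime characteristic one must be careful that the $H$-action is still dense enough to force eigenvectors of $B/Q$ to be monomials; this is precisely what the technical assumptions on the base field are there to guarantee, so invoking them should make this step go through cleanly.
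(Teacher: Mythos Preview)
Your proposal is correct and follows essentially the same route as the paper: invoke Theorem~\ref{abc}(3) to get $\F(A/P)\cong\F(B/Q)$ with $Q$ an $H$-invariant Poisson prime of the Poisson affine space $B$, then use Proposition~\ref{w} to conclude $Q=J_w$ for some subset $w$ of the indices, so that $B/Q$ is again a Poisson affine space.

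One small misattribution worth fixing: Hypothesis~\ref{hyp} is \emph{not} what ensures $H$-equivariance of the deleting derivation steps. That equivariance comes from the compatibility condition $h\big(D_{i,k}(X_j)\big)=\mu_i^k D_{i,k}\big(h(X_j)\big)$ built into the hypotheses of Theorem~\ref{abc}(3) (via Lemma~\ref{com}). Hypothesis~\ref{hyp} --- the existence of $\ul\gamma_i\in\Z^r$ with $\la_{ij}=(\ul\gamma_i\,|\,\ul f_j)$ and $(\ul\gamma_i\,|\,\ul f_i)\in\K^\times$ --- enters only at the final stage, in the classification of $H$-invariant Poisson primes of $B$: it is precisely what makes Lemma~\ref{key} work, which in turn feeds Proposition~\ref{torus} and then Proposition~\ref{w}. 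So in your write-up, move the appeal to Hypothesis~\ref{hyp} from the ``$Q$ is $H$-invariant'' step to the ``$Q$ is generated by variables'' step.
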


Contrary to the characteristic zero case, there is one hypothesis in Theorem 1.1 that is difficult to check. Namely, the existence of iterative higher Poisson derivations extending the derivations $\delta_i$. In characteristic zero, the only iterative higher Poisson derivation extending a derivation $\delta$ is actually the {\it canonical higher derivation} $(\frac{\de^i}{i!})$. In prime characteristic, the existence of an iterative higher Poisson derivation extending a given derivation is a harder problem. In Section 4 we tackle this problem using the so-called \textit{semiclassical limit} process (see Section 4.1 for details). More precisely, we show that the existence of a quantum version of the canonical higher derivation in a ``quantum algebra'' $A$ ensures (under mild hypothesis) the existence of a higher Poisson derivation in the semiclassical limit of $A$ (see Theorem \ref{fin} in Section 4.1). At the noncommutative level, the characteristic of the base field does not influence the existence of quantum version of the canonical higher derivation. The existence only depends on the genericity of the deformation parameter. However, in our case, the deformation parameter is always transcendental (to allow for the semiclassical limit process), thus ensuring the existence of quantum canonical higher derivations. 
As a consequence, we obtain many examples of Poisson algebras to which Theorem 1.1 applies. For instance, we obtain that the coordinate rings of Poisson matrix varieties and their $H$-invariant Poisson prime quotients, such as  the coordinate rings of determinantal varieties, satisfy the quadratic Poisson Gel'fand-Kirillov problem over a field of characteristic different of $2$ (see Sections 4.2 and 4.3).

\section{Poisson deleting derivation homomorphism}

The main aim of this section is to extend the Poisson deleting derivation homomorphism defined in characteristic 0 in \cite{GL} to the prime characteristic case. We first define the class of Poisson algebras that we are concerned with in this article, the so-called \textit{Poisson polynomial algebras} or \textit{iterated Poisson-Ore extensions} (Section 2.1). Then we introduce the notion of higher Poisson derivation in Section 2.2. As explained in the introduction we use these higher Poisson  derivations to overcome the characteristic problem, and thus define the characteristic-free Poisson deleting derivation homomorphism in Section 2.3. Section 2.4 is concerned with the compatibility of the characteristic-free Poisson deleting derivation homomorphism and the action of a torus acting rationally by Poisson automorphisms on the Poisson-Ore extension under consideration. This will be used later to prove the quadratic Poisson Gel'fand-Kirillov problem for torus-invariant prime factors of certain iterated Poisson-Ore extensions (see Section 3.3).

\subsection{Poisson-Ore extension}

	Poisson-Ore extensions are Poisson analogue of the well-known notion of Ore extension, or skew polynomial ring, in noncommutative ring theory. Their definition is based on the following result of Oh \cite[Theorem 1.1]{Oh}.
	
\begin{thm}[Oh]
\label{Oh}
Let $\al$ and $\de$ be $\K$-linear maps of a Poisson $\K$-algebra $A$. Then the polynomial algebra $R=A[X]$ is a Poisson algebra with Poisson bracket extending the Poisson bracket of $A$ and satisfying 
\begin{center}
$\{X,a\}=\al(a)X+\de(a)$ for all $a\in A$,
\end{center}
 if and only if $\al$ is a Poisson derivation of $A$, i.e. $\al$ is a $\K$-derivation of $A$ with 
\begin{center}
$\al(\{a,b\})=\{\al(a),b\}+\{a,\al(b)\}$ for all $a,b\in A$,
\end{center}
and 
$\de$ is a Poisson $\al$-derivation of $A$, i.e. $\de$ is a $\K$-derivation of $A$ with 
\begin{center}
$\de(\{a,b\})=\{\de(a),b\}+\{a,\de(b)\}+\al(a)\de(b)-\de(a)\al(b)$ for all $a,b\in A$.
\end{center}
\end{thm}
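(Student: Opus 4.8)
The plan is to exploit the standard description of a Poisson bracket on a commutative $\K$-algebra as a skew-symmetric $\K$-bilinear map that is a derivation in each argument (a biderivation) and satisfies the Jacobi identity, together with the fact that on a polynomial algebra such a biderivation is completely determined by its values on a set of algebra generators.

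For the ``if'' direction, assume $\alpha$ is a Poisson derivation of $A$ and $\delta$ a Poisson $\alpha$-derivation. Since $R = A[X]$ is freely generated as a $\K$-algebra by $A$ and the indeterminate $X$, the prescriptions ``restrict to $\{-,-\}$ on $A$'', ``$\{X,a\} = \alpha(a)X + \delta(a)$'' and ``$\{X,X\} = 0$'' extend uniquely to a skew-symmetric biderivation $\{-,-\}$ on $R$: one simply expands $\{\sum_i a_i X^i, \sum_j b_j X^j\}$ by iterated use of the Leibniz rule, and well-definedness is clear because there are no relations among the generators of a polynomial ring; here one uses that $\alpha$ and $\delta$ are $\K$-derivations of $A$, which is part of the hypothesis. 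It then remains to check the Jacobi identity. For this I would invoke the elementary lemma that, for any skew-symmetric biderivation on a commutative algebra, the Jacobiator $J(f,g,h) = \{f,\{g,h\}\} + \{g,\{h,f\}\} + \{h,\{f,g\}\}$ is a $\K$-triderivation (a derivation in each of its three slots) — a fact that uses only the Leibniz rule, not the Jacobi identity. Consequently $J \equiv 0$ on $R$ as soon as it vanishes on all triples of generators. The triples $(a,b,c)$ with $a,b,c \in A$ give $J=0$ since $A$ is already a Poisson algebra, and $(X,X,a)$ gives $J=0$ by skew-symmetry together with $\{X,X\}=0$; the only remaining case is $(X,a,b)$ with $a,b \in A$. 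Expanding $J(X,a,b)$ using the Leibniz rule and the formula for $\{X,\cdot\}$, and collecting terms in the free $A$-module $R = \bigoplus_i AX^i$, one finds that the coefficient of $X$ equals $\alpha(\{a,b\}) - \{\alpha(a),b\} - \{a,\alpha(b)\}$ and the coefficient of $X^0$ equals $\delta(\{a,b\}) - \{\delta(a),b\} - \{a,\delta(b)\} - \alpha(a)\delta(b) + \alpha(b)\delta(a)$; both vanish precisely because $\alpha$ is a Poisson derivation and $\delta$ a Poisson $\alpha$-derivation. Hence $R$ is a Poisson algebra of the required form.

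For the ``only if'' direction, suppose $R = A[X]$ carries a Poisson bracket extending that of $A$ and with $\{X,a\} = \alpha(a)X + \delta(a)$ for all $a \in A$. Applying the Leibniz rule to $\{X,ab\}$ and comparing coefficients of $X$ and of $X^0$ in $R = \bigoplus_i AX^i$ shows at once that $\alpha(ab) = a\alpha(b) + b\alpha(a)$ and $\delta(ab) = a\delta(b) + b\delta(a)$, so $\alpha$ and $\delta$ are $\K$-derivations of $A$. Running the computation of $J(X,a,b) = 0$ from the previous paragraph in reverse and again comparing coefficients then forces $\alpha(\{a,b\}) = \{\alpha(a),b\} + \{a,\alpha(b)\}$ (from the coefficient of $X$) and the Poisson $\alpha$-derivation identity for $\delta$ (from the coefficient of $X^0$), completing the equivalence.

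The only points that need genuine care — as opposed to routine bookkeeping — are the foundational ones: checking that the biderivation extension to the polynomial ring is well-defined, and proving the triderivation lemma for the Jacobiator using \emph{only} the Leibniz rule, so as to avoid any circularity with the Jacobi identity one is trying to establish. Once those are in hand, the whole theorem reduces to the single coefficient computation for $J(X,a,b)$, which is exactly where the two defining identities of a Poisson derivation and of a Poisson $\alpha$-derivation appear naturally.
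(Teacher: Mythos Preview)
Your argument is correct. The reduction via the Jacobiator-is-a-triderivation lemma is the standard and cleanest route: once you know that $J(f,g,h)$ is a derivation in each slot (which, as you note, uses only the Leibniz rule and commutativity, not Jacobi itself), the whole verification collapses to the single computation of $J(X,a,b)$, and your identification of its $X$- and $X^0$-coefficients with the two required identities is accurate. The ``only if'' direction is equally clean and your extraction of the derivation property of $\alpha$ and $\delta$ from Leibniz applied to $\{X,ab\}$ is the right move.

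There is, however, nothing to compare against in the present paper: Theorem~\ref{Oh} is quoted from Oh \cite[Theorem 1.1]{Oh} and is not proved here. For the record, Oh's original argument proceeds by writing down the explicit formula for the bracket of two polynomials (essentially the expansion you allude to) and verifying the axioms by direct computation, rather than invoking the triderivation lemma abstractly; your approach is slightly more conceptual but equivalent in content. One small point worth making explicit in your write-up: when you say the biderivation extension is well-defined ``because there are no relations among the generators of a polynomial ring'', remember that $A$ itself may have relations, so the correct statement is that the extension is forced by Leibniz and is consistent precisely because the given bracket on $A$ is already a biderivation and $\alpha,\delta$ are derivations; this is implicit in what you wrote but deserves a sentence.
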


\begin{Def}
Let $A$ be a Poisson algebra. The set of Poisson derivations of $A$ is denoted by $\D_P(A)$. Let $\al\in\D_P(A)$ and $\de$ be a Poisson $\al$-derivation of $A$. Set $R=A[X]$. The algebra $R$ endowed with the Poisson bracket from Theorem \ref{Oh} is denoted by $R=A[X;\al,\de]_P$ and called \textit{Poisson-Ore extension}. As usual we write $A[X;\al]_P$ for $A[X;\al,0]_P$.
\end{Def}

This construction is easily iterated. We say that \textit{$R$ is an iterated Poisson-Ore extension over $A$} if 
	\[R=A[X_1;\al_1,\de_1]_P[X_2;\al_2,\de_2]_P\cdots[X_n;\al_n,\de_n]_P\]
for some Poisson derivations $\al_1,\dots,\al_n$ and $\al_i$-Poisson derivations $\de_i$ ($1\leq i\leq n$) of the appropriate Poisson subalgebras.

	Let $\bo\la=(\lambda_{ij}) \in M_n(\K)$ be a skew-symmetric matrix. Then the polynomial algebra $\K[X_1,\dots,X_n]$ is a Poisson algebra with Poisson bracket defined by $\{X_i,X_j\}=\la_{ij}X_iX_j$ for all $i,j$. This Poisson algebra is called a \textit{Poisson affine $n$-space} and is denoted by $\K_{\bo\la}[X_1,\dots,X_n]$. This quadratic Poisson structure extends (uniquely) to a Poisson bracket on $\K(X_1,\dots,X_n)$. The field $\K(X_1,\dots,X_n)$ endowed with this Poisson structure is called a \textit{Poisson affine field} and is denoted by $\K_{\bo\la}(X_1,\dots,X_n)$. Finally, this quadratic Poisson bracket on the Poisson affine space $\K[X_1,\dots,X_n]$ extends uniquely to a Poisson bracket on the Laurent polynomial algebra $\K[X_1^{\pm1},\dots,X_n^{\pm1}]$. We call this Poisson algebra a \textit{Poisson torus} and it is denoted by $\K_{\bo\la}[X_1^{\pm1},\dots,X_n^{\pm1}]$.
	
	It is clear that the Poisson affine $n$-space $\K_{\bo\la}[X_1,\dots,X_n]$ is an iterated Poisson-Ore extension of the form $\K[X_1][X_2;\al_2]_P\cdots[X_n;\al_n]_P$, where $\al_i$ is the Poisson derivation of $\K[X_1,\dots,X_{i-1}]$ such that $\al_i(X_j)=\la_{ij}X_j$ for all $1\leq j<i\leq n$.

\subsection{Higher Poisson derivations}

	The main tool to build Poisson birational isomorphism between (certain) iterated Poisson-Ore extensions and Poisson affine $n$-spaces is the existence of higher derivations which are compatible with Poisson brackets. We now fix the notation and terminology used in this article.
\begin{Def}
\label{hd}
Let $A$ be a Poisson $\K$-algebra, $\al\in\D_P(A)$ and $\eta\in\K$.
\begin{enumerate}
	\item A \textit{higher derivation} on $A$ is a sequence of $\K$-linear maps $(D_i)_{i=0}^{\infty}=(D_i)$ such that:
		\begin{center}
\ \ \ $D_0=\id_A$ and $D_n(ab)=\sum\limits_{i=0}^{n}D_i(a)D_{n-i}(b)$ for all $a,b\in A$ and all $n\geq0$. \qquad \ \ (A1)
		\end{center}
A higher derivation is \textit{iterative} if $D_iD_j=\binom{i+j}{i}D_{i+j}$ for all $i,j\geq0$, and \textit{locally nilpotent} if for all $a\in A$ there exists $n\geq0$ such that $D_i(a)=0$ for all $i\geq n$.
	\item A higher derivation $(D_i)$ is a \textit{higher $\al$-skew Poisson derivation} if for all $a,b\in A$	and all $n\geq0$:
		\begin{center}
\ \ \ $D_n(\{a,b\})=\sum\limits_{i=0}^{n}\{D_i(a),D_{n-i}(b)\}+i\big(\al D_{n-i}(a)D_i(b)-D_i(a)\al D_{n-i}(b)\big)$. \qquad \ (A2)
		\end{center}
	\item A higher $\al$-skew Poisson derivation is a \textit{higher $(\eta,\al)$-skew Poisson derivation} if for all $i\geq0$:
		\begin{center}
\qquad \qquad \qquad \qquad \qquad \qquad \qquad $D_i\al=\al D_i+i\eta D_i$.   \qquad \qquad \qquad \qquad \qquad \ \ \ \ \ (A3)
		\end{center}
	\item We say that the derivation $\de$ of a Poisson-Ore extension $A[X;\al,\de]_P$ \textit{extends to a higher $(\eta,\al)$-skew Poisson derivation} if there exists a higher $(\eta,\al)$-skew Poisson derivation $(D_i)$ on $A$ such that $D_1=\de$.
	\end{enumerate}
\end{Def}
\begin{rems}
Let $A$ be a $\K$-algebra and $\de$ a derivation on $A$.
\begin{enumerate}
	\item In characteristic $0$, the only iterative higher derivation $(D_i)$ on $A$ such that $D_1=\de$ is given by:
	\[D_n=\frac{\de^n}{n!}\]
	for all $n\geq0$ (this easily follows from \cite[Proposition 2.1]{Mad}). This iterative higher derivation is called the \textit{canonical higher derivation associated to $\de$}.
	\item In characteristic $p>0$, an iterative higher derivation $(D_i)$ is uniquely determined by the $D_{p^k}$ for $k\geq0$. More precisely for $n=\sum_{k=0}^m n_kp^k$, the $p$-adic decomposition of $n$ we have:
	\[D_n=\frac{D_1^{n_0}D_p^{n_1}\cdots D_{p^m}^{n_m}}{n_0!n_1!\cdots n_m!}.\]
See \cite{Weis}, the result for fields being trivially adapted for $\K$-algebras.
\end{enumerate}
\end{rems}

\begin{ex}
\label{exgl}
	Suppose $\K$ of characteristic zero. Let $R=A[X;\al,\de]_P$ be a Poisson-Ore extension where $A$ is a Poisson $\K$-algebra. If there exists $\eta\in\K^{\times}$ such that $\de\al=\al\de+\eta\de$ then it follows from \cite[Lemma 3.6]{GL} (with $s=-\eta$) that:
	\[\de^n(\{a,b\})=\sum_{l+m=n}\binom{n}{l}\big(\{\de^l(a),\de^m(b)\}+m\de^l(a)\al\de^m(b)-l\de^l(a)\de^m\al(b)\big)\]
for all $a,b\in A$ and all $n\geq0$. From this it is easily shown that the canonical higher derivation $\Big(\frac{\de^n}{n!}\Big)$ is an iterative higher $(\eta,\al)$-skew Poisson derivation. The examples given in \cite{GL} provide a large family of $\al$-derivations $\de$ satisfying $\de\al=\al\de+\eta\de$ for some scalar $\eta\in \K^{\times}$, which extend to higher $(\eta,\al)$-skew Poisson derivations.
\end{ex}

	The following proposition gives a criterion for a sequence of $\K$-linear maps to be a higher $(\eta,\al)$-skew Poisson derivation. This will be used later to extend a higher $(\eta,\al)$-skew Poisson derivation to certain localisations. For $\be\in\D_P(A)$, the Poisson bracket of $A$ uniquely extends to a Poisson bracket on the formal power series algebra $A[[X]]$ by setting $\{X,a\}=\be(a)X$. This Poisson algebra is denoted by $A[[X;\be]]_P$. The Poisson bracket of two elements of $A[[X;\be]]_P$ is given by:
	\[\{\sum_{i\geq 0}a_iX^i,\sum_{j\geq 0}b_jX^j\}=\sum_{n\geq0}\Big(\sum_{i+j=n}\big(\{a_i,b_j\}+ia_i\be(b_j)-j\be(a_i)b_j\big)\Big)X^n,\]
where all the $a_i$s and the $b_j$s are in $A$. (Remark that we have just extended by continuity the Poisson bracket of $A[X;\be]_P$ to its completion $A[[X]]$.) Note that the Poisson derivation $\be$ of $A$ extends to a Poisson derivation of $A[[X;\be]]_P$ by setting $\be(X)=\eta X$ for any $\eta\in\K$ since:
	\[\be(\{X,a\})=\big(\be^2(a)+\eta\be(a)\big)X=\{\be(X),a\}+\{X,\be(a)\}.\]

\begin{prop}
\label{diag}
Let $(D_i)_{i=0}^\infty$ be a sequence of $\K$-linear maps on a Poisson $\K$-algebra $A$ with $D_0=\id_A$, $\al\in\D_P(A)$ and $\eta\in\K$.
\begin{enumerate}
	\item [\rm{(a)}] $(D_i)$ is a higher $\al$-skew Poisson derivation on $A$ if and only if the $\K$-linear map $\Psi : A\rightarrow A[[X;-\al]]_P$ given by $a\mapsto\sum_{i=0}^{\infty}D_i(a)X^i$ is a Poisson homomorphism.
	\item [\rm{(b)}] Extend $\al$ to a Poisson derivation on $A[[X;-\al]]_P$ by setting $\al(X)=\eta X$. Assume that $(D_i)$ is a higher $\al$-skew Poisson derivation. Then $(D_i)$ is a higher $(\eta,\al)$-skew Poisson derivation if and only if the diagram of Figure 1 is commutative.
 \end{enumerate}

	\begin{figure}[!ht]
	\centering
		\begin{tikzpicture}
	\node (a) at (0,0){$A[[X;-\al]]_P$};
	\node (b) at (3,0){$A[[X;-\al]]_P$};
	\node (c) at (0,-2){$A$};
	\node (d) at (3,-2){$A$};
	\node (aa) at (1.5,0.2){$\al$};
	\node (bb) at (-0.2,-1){$\Psi$};
	\node (cc) at (1.5,-2.2){$\al$};
	\node (dd) at (3.2,-1){$\Psi$};
	\draw [->] (a) to (b);\draw [->] (c) to (d);
	\draw [->] (c) to (a);\draw [->] (d) to (b);
		\end{tikzpicture}
		\caption{}
	\label{diag1}
 	\end{figure}
\end{prop}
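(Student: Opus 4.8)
The plan is to prove both equivalences by a direct comparison of the coefficients of the powers of $X$ in the formal power series ring $A[[X;-\al]]_P$, using the explicit bracket formula recalled just before the statement.

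For part (a), I would first treat the multiplicative structure. Since $\Psi(a)\Psi(b)=\sum_{n\geq 0}\big(\sum_{i+j=n}D_i(a)D_j(b)\big)X^n$ and $\Psi(ab)=\sum_{n\geq 0}D_n(ab)X^n$, the map $\Psi$ is multiplicative if and only if (A1) holds for all $n$; unitality is automatic, since $D_0=\id_A$ together with (A1) forces $D_n(1)=0$ for $n\geq 1$ by an immediate induction, so that $\Psi(1)=1$. Next I would expand $\{\Psi(a),\Psi(b)\}$ via the bracket formula on $A[[X;\be]]_P$ with $\be=-\al$, obtaining that the $X^n$-coefficient of $\{\Psi(a),\Psi(b)\}$ is $\sum_{i+j=n}\big(\{D_i(a),D_j(b)\}-iD_i(a)\,\al D_j(b)+j\,\al D_i(a)\,D_j(b)\big)$. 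Comparing with the $X^n$-coefficient $D_n(\{a,b\})$ of $\Psi(\{a,b\})$, and rewriting the term $\sum_{i}i\,\al D_{n-i}(a)D_i(b)$ appearing in (A2) as $\sum_{i+j=n}j\,\al D_i(a)D_j(b)$ through the substitution $i\mapsto n-i$, one sees that $\Psi(\{a,b\})=\{\Psi(a),\Psi(b)\}$ holds for all $a,b$ exactly when (A2) holds for all $n$. Combining the two observations, $\Psi$ is a Poisson homomorphism if and only if $(D_i)$ satisfies (A1) and (A2), i.e. is a higher $\al$-skew Poisson derivation.

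For part (b), $\Psi$ is already a Poisson homomorphism by hypothesis and part (a), so only the commutativity of the square needs to be analysed. I would compute the two composites on an arbitrary $a\in A$: going down and then right gives $\Psi(\al(a))=\sum_{i\geq 0}D_i(\al(a))X^i$, while going right and then down gives, after applying the derivation $\al$ of $A[[X;-\al]]_P$ with $\al(X)=\eta X$ (so that $\al(X^i)=i\eta X^i$) together with the Leibniz rule, $\al(\Psi(a))=\sum_{i\geq 0}\big(\al D_i(a)+i\eta D_i(a)\big)X^i$. Equating coefficients of $X^i$, the diagram commutes for all $a$ if and only if $D_i\al=\al D_i+i\eta D_i$ for every $i\geq 0$, which is precisely (A3); together with the standing assumption this yields the claimed equivalence.

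None of these steps is genuinely hard; the only point requiring care is the index bookkeeping in part (a). In (A2) the ``$\al$-terms'' carry indices that are transposed relative to what the naive expansion of the power series bracket produces, so one must perform the substitution $i\mapsto n-i$ in one of the two sums before the coefficients can be matched. One should also keep track of the sign conventions: the relevant completion is $A[[X;-\al]]_P$ (with $\{X,a\}=-\al(a)X$) rather than $A[[X;\al]]_P$, and in part (b) it is $\al$ itself --- not $-\al$ --- that is extended to the power series ring by $\al(X)=\eta X$, which is indeed a Poisson derivation there by a short check on generators analogous to the one recorded before the statement.
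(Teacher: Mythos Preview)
Your proposal is correct and follows essentially the same route as the paper: both parts are handled by expanding $\Psi$ in $A[[X;-\al]]_P$ and comparing coefficients of $X^n$, with the same reindexing $i\mapsto n-i$ in part~(a) to match the $\alpha$-terms in (A2), and the same Leibniz computation $\al(D_i(a)X^i)=(\al D_i(a)+i\eta D_i(a))X^i$ in part~(b). The only differences are cosmetic: you make the unitality check explicit, and your verbal description of traversing the square (``down then right'') is inverted relative to the figure, though the formulas you write are the correct ones.
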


\begin{proof}
(a) It is obvious that $\Psi$ is a $\K$-algebra homomorphism if and only if $(D_i)$ satisfies Axiom (A1). Let $a,b\in A$. We need to check that the equality $\Psi(\{a,b\})=\{\Psi(a),\Psi(b)\}$ is equivalent to Axiom (A2):
\begin{align*}
	\{\Psi(a),\Psi(b)\}&=\sum_{i,j}\{D_i(a)X^i,D_j(b)X^j\}\\
	&=\sum_{i,j}\{D_i(a),D_j(b)\}X^{i+j}-iD_i(a)\al D_j(b)X^{i+j}+j\al D_i(a)D_j(b)X^{i+j}\\
	&=\sum_{i,j}\Big(\{D_i(a),D_j(b)\}+j\al D_i(a)D_j(b)-iD_i(a)\al D_j(b)\Big)X^{i+j}\\
	&=\sum_{n\geq0}\sum_{i+j=n}\Big(\{D_i(a),D_j(b)\}+i\al D_j(a)D_i(b)-iD_i(a)\al D_j(b)\Big)X^n.
	\end{align*}
 Since $\Psi(\{a,b\})=\sum_{n\geq0}D_n(\{a,b\})X^n$ and $\{X^n\ |\ n\geq0\}$ is a basis of $A[[X]]$, the equivalence is shown.

(b) We show that $\Psi\al=\al\Psi$ is equivalent to  Axiom (A3). Let $a\in A$. Then we have:
\begin{align*}
	\al\Psi(a)&=\sum_{i\geq0}\al\big(D_i(a)X^i\big)\\
	&=\sum_{i\geq0}\al D_i(a)X^i+D_i(a)\al(X^i)\\
	&=\sum_{i\geq0}\big(\al D_i(a)+i\eta D_i(a)\big)X^i,
\end{align*}
On the other hand, we have:
\begin{align*}
	\Psi\al(a)=\sum_{i\geq0}\big(D_i\al(a)\big)X^i.
\end{align*}
Hence $\Psi\al=\al\Psi$ if and only if $(D_i)$ satisfies Axiom (A3).
\end{proof}

\begin{prop}
Let $\al\in\D_P(A)$, $\eta\in\K$ and $(D_i)$ a higher $(\eta,\al)$-skew Poisson derivation on a Poisson $\K$-algebra $A$. Let $S$ be a multiplicative set of regular elements of $A$. Then $(D_i)$ uniquely extends to a higher $(\eta,\al)$-skew Poisson derivation on $AS^{-1}$.
\end{prop}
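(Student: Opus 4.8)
The plan is to mimic the standard argument for extending an ordinary (higher) derivation to a localisation: define the extension by the only formula that can possibly work, then verify it is well-defined and that it satisfies Axioms (A1)–(A3). First I would recall the classical fact that a single derivation $\de$ on $A$ extends uniquely to $AS^{-1}$ via the quotient rule $\de(as^{-1}) = \de(a)s^{-1} - as^{-2}\de(s)$; the same reasoning handles each $D_i$, but the $D_i$ are coupled by Axiom (A1), so the cleanest route is to use the characterisation from Proposition \ref{diag}(a). Namely, $(D_i)$ being a higher $\al$-skew Poisson derivation is equivalent to the map $\Psi\colon A\to A[[X;-\al]]_P$, $a\mapsto\sum_i D_i(a)X^i$, being a Poisson algebra homomorphism. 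Since $\Psi$ is an algebra homomorphism and each element of $S$ maps to a unit of $A[[X;-\al]]_P$ (its constant term $s = D_0(s)$ is a regular element of $A$, hence invertible in $\F A$, and the power series $\sum_i D_i(s)X^i$ is invertible over $\F A[[X]]$ because its constant term is invertible there; one must check the inverse has coefficients in $AS^{-1}$, which it does since inverting $s + (\text{higher order})$ only introduces powers of $s$ in the denominators), the universal property of localisation gives a unique algebra homomorphism $\widetilde\Psi\colon AS^{-1}\to AS^{-1}[[X;-\al]]_P$ extending $\Psi$.

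Next I would extract the extended maps $\widetilde D_i\colon AS^{-1}\to AS^{-1}$ as the coefficient functions of $\widetilde\Psi$, so that $\widetilde\Psi(q) = \sum_i \widetilde D_i(q)X^i$ for $q\in AS^{-1}$. These are automatically $\K$-linear with $\widetilde D_0 = \id$, and they satisfy (A1) because $\widetilde\Psi$ is an algebra homomorphism (this is the converse direction of the trivial equivalence noted in the proof of Proposition \ref{diag}(a)). To get (A2), I need $\widetilde\Psi$ to be a \emph{Poisson} homomorphism on $AS^{-1}$; since $\Psi$ is Poisson on $A$ and Poisson brackets extend uniquely to localisations (compatibly with algebra homomorphisms that respect them — one checks the bracket of two fractions using the Leibniz rule and the known values on $A$), $\widetilde\Psi$ is Poisson, and Proposition \ref{diag}(a) applied over $AS^{-1}$ then yields (A2) for $(\widetilde D_i)$. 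For (A3), recall that $\al$ extends uniquely to a (Poisson) derivation of $AS^{-1}$ and also to $AS^{-1}[[X;-\al]]_P$ via $\al(X)=\eta X$; by Proposition \ref{diag}(b) it suffices to show $\widetilde\Psi\,\al = \al\,\widetilde\Psi$ on $AS^{-1}$. Both sides are known to agree on $A$ by hypothesis (A3) for $(D_i)$, both are derivation-like in a suitable sense, and two maps that agree on $A$ and are compatible with the localisation structure must agree on $AS^{-1}$; concretely one writes $q = as^{-1}$ and expands, using that $\widetilde\Psi$ is an algebra homomorphism and $\al(s^{-1}) = -s^{-2}\al(s)$.

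Finally, uniqueness: any higher $(\eta,\al)$-skew Poisson derivation on $AS^{-1}$ restricting to $(D_i)$ on $A$ gives, via the recipe of Proposition \ref{diag}(a), a Poisson (in particular algebra) homomorphism $AS^{-1}\to AS^{-1}[[X;-\al]]_P$ extending $\Psi$, and such an extension is unique by the universal property of localisation; reading off coefficients shows the $D_i$ are determined. The main obstacle I anticipate is purely bookkeeping rather than conceptual: one must confirm that inverting an element of $S$ inside the power series ring $A[[X;-\al]]_P$ keeps all coefficients inside $AS^{-1}$ (so that $\widetilde\Psi$ genuinely lands in $AS^{-1}[[X;-\al]]_P$ and not merely in $\F A[[X]]$), and that the Poisson bracket on the power series ring over $AS^{-1}$ is the honest extension of the one over $A$ — both are routine but need to be stated. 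An alternative, more hands-on route avoiding $\Psi$ altogether is to define $\widetilde D_n(s^{-1})$ recursively by applying $\widetilde D_n$ to $s\cdot s^{-1}=1$ and solving, then extend by the product rule; this produces the same maps but makes the verification of (A1)–(A3) a longer induction on $n$, so I would prefer the $\Psi$-based argument.
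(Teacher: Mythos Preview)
Your proposal is correct and matches the paper's argument closely: the paper likewise uses the characterisation via $\Psi$ from Proposition~\ref{diag}, composes with the embedding $A[[X;-\al]]_P\hookrightarrow AS^{-1}[[X;-\al]]_P$ so that the constant term of $\Psi(s)$ is a unit in $AS^{-1}$ (avoiding your detour through $\F A$), extends by the universal property of localisation, checks the Poisson property and the commutation with $\al$ by direct computation on $as^{-1}$, and reads off the extended $D_i$ as coefficients. The only organisational difference is that the paper establishes uniqueness first via your ``alternative'' recursive formula (obtained by applying $D_n$ to $(as^{-1})\cdot s = a$), whereas you deduce it at the end from the universal property of localisation.
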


\begin{proof}
A derivation $\be$ of $A$ extends uniquely to $AS^{-1}$ by:
\begin{align}
	\be(as^{-1})=\be(a)s^{-1}-as^{-2}\be(s) \text{ for $a\in A$ and $s\in S$}.
\end{align}
So we can extend uniquely $\al$ and $D_1$ to $AS^{-1}$. Moreover if $\al\in\D_P(A)$ then after extension $\al\in\D_P(AS^{-1})$.\\
Now suppose that $(D_i)$ extends to a higher ($\eta,\al$)-skew Poisson derivation on $AS^{-1}$. For $a\in A$ and $s\in S$, we apply $D_n$ to the equation $a1^{-1}=(as^{-1})(s1^{-1})$ to get:
\begin{align*}
	D_n(a)1^{-1}&=D_n\big((as^{-1})(s1^{-1})\big)\\
	&=\sum\limits_{i=0}^{n}D_i(as^{-1})D_{n-i}(s1^{-1})\\
	&=D_n(as^{-1})s1^{-1}+\sum\limits_{i=0}^{n-1}D_i(as^{-1})D_{n-i}(s1^{-1}).
\end{align*}
This implies:
	\[D_n(as^{-1})=\Big(D_n(a)-\sum\limits_{i=0}^{n-1}D_i(as^{-1})D_{n-i}(s)\Big)s^{-1}\]
and proves the unicity.

	Let $\Psi : A\rightarrow A[[X;-\al]]_P$ be the $\K$-linear map defined in Proposition \ref{diag} and let \\
$\Phi :A[[X;-\al]]_P\rightarrow AS^{-1}[[X;-\al]]_P$ be the canonical embedding. Consider the composite map $\Gamma=\Phi\circ\Psi : A\rightarrow AS^{-1}[[X;-\al]]_P$ and note that $\Gamma$ is a $\K$-algebra Poisson homomorphism by Proposition \ref{diag}, since $(D_i)$ is a higher $\al$-skew Poisson derivation on $A$. For all $s\in S$, the constant term of $\Gamma(s)$ is a unit in $AS^{-1}$ and so $\Gamma(s)$ is a unit in $AS^{-1}[[X;-\al]]_P$. Hence $\Gamma$ extends to a $\K$-algebra homomorphism $\Gamma ': AS^{-1}\rightarrow AS^{-1}[[X;-\al]]_P$ such that $\Gamma'(as^{-1})=\Gamma(a)\Gamma(s)^{-1}$. A straigthforward computation shows that $\Gamma'$ is a Poisson homomorphism.
	
	We consider the diagram of Figure \ref{diag2},
\begin{figure}[!ht]
	\centering
		\begin{tikzpicture}
	\node (a) at (0,0){$AS^{-1}[[X;-\al]]_P$};
	\node (b) at (4,0){$AS^{-1}[[X;-\al]]_P$};
	\node (c) at (0,-2){$AS^{-1}$};
	\node (d) at (4,-2){$AS^{-1}$};
	\node (aa) at (2,0.2){$\al$};
	\node (bb) at (-0.4,-1){$\Gamma '$};
	\node (cc) at (2,-2.2){$\al$};
	\node (dd) at (4.3,-1){$\Gamma '$};
	\draw [->] (a) to (b);\draw [->] (c) to (d);
	\draw [->] (c) to (a);\draw [->] (d) to (b);
		\end{tikzpicture}
		\caption{}
	\label{diag2}
\end{figure}
where $\al$ has been extended to a Poisson derivation of $AS^{-1}[[X;-\al]]_P$ via (1) and $\al(X)=\eta X$. Since $\Gamma(a)=\sum_{i\geq0}(D_i(a)1^{-1})X^i$, and $(D_i)$ is a higher $(\eta,\al)$-skew Poisson derivation on $A$ we have:
\begin{align*}
	\al\Gamma(a)&=\sum_{i\geq0}\al\big((D_i(a)1^{-1})X^i\big)\\
	&=\sum_{i\geq0}\al(D_i(a)1^{-1})X^i+(D_i(a)1^{-1})\al(X^i)\\
	&=\sum_{i\geq0}(\al D_i(a)1^{-1}+i\eta D_i(a)1^{-1})X^i\\
	&=\sum_{i\geq0}(D_i\al(a)1^{-1})X^i=\Gamma\al(a)\quad \qquad \text{for all $a\in A$.}
\end{align*}
Since $\Gamma$ is a $\K$-algebra homomorphism and $\al$ a $\K$-derivation we have:
\begin{align*}
	\al\Gamma '(as^{-1})&=\al(\Gamma(a)\Gamma(s)^{-1})\\
	&=\al\Gamma(a)\Gamma(s)^{-1}-\Gamma(a)\Gamma(s)^{-2}\al\Gamma(s)\\
	&=\Gamma\al(a)\Gamma(s)^{-1}-\Gamma(a)\Gamma(\al(s))\Gamma(s)^{-2}\\
	&=\Gamma\al(a)\Gamma(s)^{-1}-\Gamma(a\al(s))\Gamma(s^2)^{-1}\\
	&=\Gamma '(\al(a)s^{-1}-a\al(s)s^{-2})\\
	&=\Gamma'\al(as^{-1}).
\end{align*}
Thus the diagram of Figure \ref{diag2} is commutative, as desired.

	Define a sequence $(D_i)$ on $AS^{-1}$ such that $D_i(as^{-1})$ is the coefficient of $X^i$ in $\Gamma '(as^{-1})$ for all $as^{-1}\in AS^{-1}$. Then, by Proposition \ref{diag}, we conclude that this sequence is a higher ($\eta,\al$)-skew Poisson derivation on $AS^{-1}$ extending $(D_i)$ on $A$, as requested.
\end{proof}

	We conclude this section by two easy technical lemmas whose proofs are left to the reader.

\begin{lem}
\label{sub}
Let $A$ be a Poisson $\K$-algebra, let $B\subseteq A$ be a Poisson subalgebra generated, as an algebra, by a finite set $\{b_1,\dots,b_k\}$. Let $\al\in\D_P(A)$ and $(D_i)$ be a higher $\al$-skew Poisson derivation on $A$. If $D_i(b_j)\in B$ and $\al(b_j)\in B$ for all $i\geq0$ and all $1\leq j\leq k$, then $D_n(B)\subseteq B$ and $D_n(\{B,B\})\subseteq B$ for all $n\geq0$.
\end{lem}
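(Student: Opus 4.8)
The plan is to reduce everything to checking the two required inclusions on products of the generators $b_1,\dots,b_k$, using the two defining axioms (A1) and (A2) of a higher $\al$-skew Poisson derivation together with the hypotheses $D_i(b_j)\in B$ and $\al(b_j)\in B$. For the first claim, $D_n(B)\subseteq B$, I would argue by induction on $n$ (the case $n=0$ being $D_0=\id_A$) and, for fixed $n$, by induction on the length of a monomial in the $b_j$. A general element of $B$ is a $\K$-linear combination of products $b_{j_1}\cdots b_{j_m}$, so by $\K$-linearity of $D_n$ it suffices to handle such a product. Writing it as $b\cdot c$ with $b=b_{j_1}$ and $c=b_{j_2}\cdots b_{j_m}$, Axiom (A1) gives $D_n(bc)=\sum_{i=0}^n D_i(b)D_{n-i}(c)$; each $D_i(b)=D_i(b_{j_1})$ lies in $B$ by hypothesis, and each $D_{n-i}(c)$ lies in $B$ by the induction on monomial length, so the sum lies in $B$. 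This settles $D_n(B)\subseteq B$ for all $n$.

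For the second claim, $D_n(\{B,B\})\subseteq B$, first note that since $B$ is a Poisson subalgebra, $\{B,B\}\subseteq B$, so $D_n(\{B,B\})\subseteq D_n(B)\subseteq B$ by the first part — so the statement is in fact immediate once $B$ is known to be a Poisson subalgebra. However, I suspect the intended reading (and the reason Axiom (A2) is invoked and $\al(b_j)\in B$ is assumed) is the sharper companion fact needed later: that one can compute $D_n$ applied to a bracket $\{x,y\}$ with $x,y\in B$ entirely within $B$ via the formula in (A2), namely $D_n(\{x,y\})=\sum_{i=0}^n\big(\{D_i(x),D_{n-i}(y)\}+i(\al D_{n-i}(x)D_i(y)-D_i(x)\al D_{n-i}(y))\big)$, each term of which lies in $B$ because $D_j(B)\subseteq B$ (first part), $\al(B)\subseteq B$ (induct on monomial length using that $\al$ is a derivation and $\al(b_j)\in B$), and $\{B,B\}\subseteq B$. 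So the two ingredients to record are: $\al(B)\subseteq B$, proved by the Leibniz rule and induction on monomial length from $\al(b_j)\in B$; and then $D_n(B)\subseteq B$ as above.

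The only mild subtlety — and the single point worth being careful about — is the order of the two inductions in the proof of $D_n(B)\subseteq B$: one must fix $n$ and induct on monomial length while having already available $D_m(B)\subseteq B$ for all $m<n$ (actually the argument above via (A1) only needs $D_i(b_{j_1})\in B$, which is a hypothesis, and $D_{n-i}$ of a shorter monomial, so a clean single induction on monomial length with $n$ ranging freely, together with the base hypothesis on the generators, suffices). Everything else is a routine bookkeeping of finite sums of elements of $B$, which is exactly why the authors leave it to the reader. The hard part, such as it is, is simply being disciplined about which inclusion ($\al(B)\subseteq B$, $D_i(B)\subseteq B$, $\{B,B\}\subseteq B$) is available at each stage so that the terms produced by (A1) and (A2) are visibly in $B$.
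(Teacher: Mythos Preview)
Your argument is correct and is exactly the routine verification the authors have in mind; the paper explicitly leaves this proof to the reader, so there is nothing to compare against beyond noting that your induction on monomial length via Axiom (A1) is the intended approach. Your observation that the second inclusion $D_n(\{B,B\})\subseteq B$ is already immediate from the first (since $B$ is assumed to be a Poisson subalgebra, hence $\{B,B\}\subseteq B$) is a nice point --- the hypothesis $\al(b_j)\in B$ is indeed not needed for the statement as written, though as you suspect it is there so that Axiom~(A2) can be applied term-by-term in later arguments.
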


\begin{lem}
\label{ln}
Let $A$ be a commutative $\K$-algebra generated by a finite set $\{a_1,\dots,a_k\}$. Let $\al\in\D(A)$ and $(D_i)$ be a higher derivation on $A$. If $(D_i)$ is locally nilpotent on $a_j$ for all $1\leq j\leq k$, then $(D_i)$ is locally nilpotent on $A$.
\end{lem}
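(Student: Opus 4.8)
The plan is to introduce the set
\[
L=\{a\in A\ :\ (D_i)\ \text{is locally nilpotent on}\ a\}
\]
and to show that $L$ is a $\K$-subalgebra of $A$. Once this is done, the hypothesis gives $a_1,\dots,a_k\in L$, and since $\{a_1,\dots,a_k\}$ generates $A$ as a $\K$-algebra we conclude $L=A$, which is exactly the assertion. (The derivation $\al\in\D(A)$ in the statement is not actually used; it is recorded only for parallelism with Lemma~\ref{sub} and for later applications.)

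To prove that $L$ is a $\K$-subalgebra I would proceed in three short steps. For $\K$-linearity: if $D_i(a)=0$ for all $i\ge m$ and $D_i(b)=0$ for all $i\ge n$, then, since each $D_i$ is $\K$-linear, $D_i(\la a+b)=\la D_i(a)+D_i(b)=0$ for all $i\ge\max(m,n)$, so $\la a+b\in L$. For multiplicativity: by property (A1), $D_N(ab)=\sum_{i=0}^{N}D_i(a)D_{N-i}(b)$ for every $N$, and if $N\ge m+n-1$ then in each summand we must have $i\ge m$ or $N-i\ge n$ (otherwise $N=i+(N-i)\le(m-1)+(n-1)<m+n-1$), so every term vanishes and $D_N(ab)=0$; hence $ab\in L$. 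Finally, $1\in L$: applying $D_n$ to $1=1\cdot 1$ and using (A1) gives $D_n(1)=2D_n(1)+\sum_{i=1}^{n-1}D_i(1)D_{n-i}(1)$, from which $D_n(1)=0$ for all $n\ge 1$ follows by an immediate induction on $n$ (the case $n=1$ already forces $D_1(1)=2D_1(1)$); combined with $\K$-linearity this yields $\K\subseteq L$.

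I do not expect a genuine obstacle here: the lemma is essentially the well-known fact that the locus of local nilpotence of a higher derivation is a subalgebra, and the argument above makes this precise. The only points that require a little care are pinning down the cut-off $m+n-1$ in the multiplicativity step and the (standard) verification that a higher derivation annihilates the constants; both are routine.
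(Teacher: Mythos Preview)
Your argument is correct: the set $L$ of elements on which $(D_i)$ is locally nilpotent is a $\K$-subalgebra, and since it contains the generators it equals $A$. The paper itself does not give a proof of this lemma (it is marked ``left to the reader''), so there is nothing to compare; your write-up is exactly the routine verification the authors had in mind, including the observation that $\al$ plays no role here.
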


\subsection{Deleting derivation homomorphism}
\label{ddh}

Let $A[X;\al,\de]_P$ be a Poisson-Ore extension, where $A$ is a Poisson $\K$-algebra and set $S=\{X^n\ |\ n\geq0\}$. The set $S$ is a multiplicative set (of regular elements) and we denote by $A[X^{\pm1};\al,\de]_P$ the localisation $S^{-1}\big(A[X;\al,\de]_P\big)$. Poisson brackets extend uniquely by localisation, so $A[X^{\pm1};\al,\de]_P$ is also a Poisson algebra, called \textit{Poisson-Ore Laurent algebra}. Suppose that the derivation $\de$ extends to an iterative locally nilpotent higher $(\al,\eta)$-skew Poisson derivation $(D_i)$ with $\eta\in\K^{\times}$. We define the map $F: A\rightarrow A[X^{\pm1};\al,\de]_P$ by
	\[F(a)=\sum_{i\geq0}\frac{1}{\eta^i}D_i(a)X^{-i} \qquad \text{for all $a\in A$.}\]
Note that this sum is finite since $(D_i)$ is locally nilpotent.
\begin{prop}
\label{ddm}
 The $\K$-linear map $F :A\rightarrow A[X^{\pm1};\al,\de]_P$ is a Poisson homomorphism and satisfies the following identity
	\[\{X,F(a)\}=F\big(\al(a)\big)X\qquad\text{for all $a\in A$}.\]
\end{prop}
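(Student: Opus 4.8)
The plan is to verify the two claims — that $F$ is a Poisson homomorphism and that $\{X,F(a)\}=F(\al(a))X$ — by reducing everything to the already-established machinery around the map $\Psi\colon A\to A[[X;-\al]]_P$, $a\mapsto\sum_i D_i(a)X^i$ of Proposition \ref{diag}. The key observation is that $F$ is essentially $\Psi$ read ``backwards'': if we substitute $X\mapsto (\eta^{-1}) X^{-1}$ (or, more cleanly, rescale and invert the formal variable), the series $\sum_i \eta^{-i}D_i(a)X^{-i}$ appearing in the definition of $F$ is the image of $\Psi(a)$ under a suitable substitution homomorphism from $A[[X;-\al]]_P$ into $A[X^{\pm1};\al,\de]_P$ (or rather into its $X$-adic-type completion). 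So the first step is to make this substitution precise: define a map from (a completion of) $A[X;-\al]_P$ to $A[X^{\pm1};\al,\de]_P$ sending $X\mapsto \eta^{-1}X^{-1}$ and the identity on $A$, check it is a $\K$-algebra homomorphism, and check it respects the Poisson brackets. The Poisson-bracket check amounts to comparing $\{X^{-1},a\}$ computed in $A[X^{\pm1};\al,\de]_P$ (using $\{X,a\}=\al(a)X+\de(a)$, hence $\{X^{-1},a\}=-X^{-2}\{X,a\}=-\al(a)X^{-1}-\de(a)X^{-2}$) with what the source bracket $\{X,a\}=-\al(a)X$ gives after substitution; the extra $\de$-term is exactly what is absorbed by the fact that $\Psi$ is a \emph{higher $\al$-skew Poisson derivation} whose $D_1=\de$, so the composite will come out Poisson.

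Concretely I would proceed as follows. (1) Since $(D_i)$ is iterative, locally nilpotent, and a higher $\al$-skew Poisson derivation, Proposition \ref{diag}(a) gives that $\Psi\colon A\to A[[X;-\al]]_P$ is a Poisson algebra homomorphism. (2) Local nilpotence means each $\Psi(a)$ is in fact a polynomial, so $\Psi$ lands in $A[X;-\al]_P$; compose with the substitution $X\mapsto \eta^{-1}X^{-1}$ to land in $A[X^{\pm1};\al,\de]_P$ — call the composite $F$ and note it agrees with the stated formula. (3) Verify this substitution map $\sigma\colon A[X;-\al]_P \to A[X^{\pm1};\al,\de]_P$ is a Poisson homomorphism: it is clearly a $\K$-algebra map, and for the bracket one checks $\sigma(\{X,a\}) = \sigma(-\al(a)X) = -\al(a)\eta^{-1}X^{-1}$ must equal $\{\sigma(X),\sigma(a)\} = \{\eta^{-1}X^{-1}, a\} = \eta^{-1}\bigl(-\al(a)X^{-1} - \de(a)X^{-2}\bigr)$; these do \emph{not} match term-by-term, so $\sigma$ alone is not Poisson — which is why one must compose with $\Psi$ rather than with the identity. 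The correct statement is that $F = \sigma\circ\Psi$ is Poisson because the discrepancy in $\sigma$ is precisely corrected by the higher terms of $\Psi$; this is the content of Axiom (A2). I would therefore either do the bracket computation directly on $F(a)=\sum_i\eta^{-i}D_i(a)X^{-i}$ using (A2) and the extended bracket formula for $A[[X;\be]]_P$ given in the text, or — more slickly — invoke Proposition \ref{diag} in the completed Laurent setting.

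For the direct route: expand $\{F(a),F(b)\}$ in $A[X^{\pm1};\al,\de]_P$ using $\{X^{-i},X^{-j}\}$-type relations together with $\{X^{-i}, c\}$ for $c\in A$, collect the coefficient of each power $X^{-n}$, and match it against $F(\{a,b\}) = \sum_n \eta^{-n}D_n(\{a,b\})X^{-n}$ using Axiom (A2); the role of Axiom (A3) (the relation $D_i\al = \al D_i + i\eta D_i$) enters when pushing $\al$ past the $D_i$'s, and the factor $\eta^{-i}$ is exactly calibrated so that the $i\eta$ terms telescope correctly. The identity $\{X,F(a)\}=F(\al(a))X$ is the easier of the two: compute $\{X, \sum_i \eta^{-i}D_i(a)X^{-i}\} = \sum_i \eta^{-i}\bigl(\{X,D_i(a)\}X^{-i} + D_i(a)\{X,X^{-i}\}\bigr) = \sum_i \eta^{-i}\bigl((\al D_i(a)X + \de D_i(a))X^{-i} - i D_i(a)X^{-i}\bigr)$, then use $\de D_i = D_1 D_i = (i+1)D_{i+1}$ (iterativity) and Axiom (A3) to reorganise into $\sum_i \eta^{-i}(D_i\al(a))X^{1-i} = F(\al(a))X$.

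The main obstacle I anticipate is bookkeeping in step (3)/the direct bracket computation: correctly handling the localisation at $X$ (so that $\{X^{-i},-\}$ is computed via $\{X^{-1},-\} = -X^{-2}\{X,-\}$ and the Leibniz rule, producing both $X^{-i}$ and $X^{-i-1}$ contributions) and then checking that, after summing, Axioms (A2) and (A3) together with iterativity make the coefficient of each $X^{-n}$ collapse to $\eta^{-n}D_n(\{a,b\})$. The genuinely delicate point is that the $\de$-part of the Poisson-Ore bracket on $A[X^{\pm1};\al,\de]_P$ shifts degrees, so terms from $D_i$ at level $X^{-i-1}$ must be reconciled with terms from $D_{i+1}$ at level $X^{-i-1}$ — this is where iterativity ($D_1 D_i = (i+1)D_{i+1}$) is indispensable. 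Once that degree-shift cancellation is set up carefully, everything else is routine.
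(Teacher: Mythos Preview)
Your ``direct route'' is essentially the paper's proof: expand $\{F(a),F(b)\}$ in $A[X^{\pm1};\al,\de]_P$, use Axiom~(A2) together with iterativity ($D_1D_i=(i+1)D_{i+1}$) to collapse the coefficient of each $X^{-n}$ to $\eta^{-n}D_n(\{a,b\})$, and separately compute $\{X,F(a)\}$ using (A3) and iterativity. So on that branch you are fine and aligned with the paper.

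The ``slick'' substitution route, however, does not buy you anything, and as stated it is a gap rather than a shortcut. You correctly observe that the substitution $\sigma\colon A[X;-\al]_P\to A[X^{\pm1};\al,\de]_P$, $X\mapsto\eta^{-1}X^{-1}$, is \emph{not} a Poisson homomorphism. But then the sentence ``$F=\sigma\circ\Psi$ is Poisson because the discrepancy in $\sigma$ is precisely corrected by the higher terms of $\Psi$'' is not an argument: if $\sigma$ is not Poisson, then $\Psi$ being Poisson (Proposition~\ref{diag}) tells you nothing about $\sigma\circ\Psi$. To justify that the composite is Poisson you would have to compute $\{\sigma\Psi(a),\sigma\Psi(b)\}$ directly in the target --- which is exactly the direct computation you were trying to avoid. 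So drop this branch and just do the direct calculation.

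One small slip in your sketch of $\{X,F(a)\}$: in any Poisson algebra $\{X,X^{-i}\}=0$ (since $\{X,X\}=0$ and then Leibniz), so the term $D_i(a)\{X,X^{-i}\}$ vanishes and your ``$-\,iD_i(a)X^{-i}$'' should not appear. With that term removed, your outline is correct: $\{X,D_i(a)\}=\al D_i(a)\,X+D_1D_i(a)$, then $D_1D_i=(i+1)D_{i+1}$ shifts the index, and (A3) in the form $\al D_i+i\eta D_i=D_i\al$ recombines the two sums into $\sum_i\eta^{-i}D_i(\al(a))X^{1-i}=F(\al(a))X$, exactly as in the paper.
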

\begin{proof}
$F$ is an algebra homomorphism because $(D_i)$ satisfies Axiom (A1).
Let us show that $F$ respects the Poisson bracket using Axiom (A2) and the iterativity of $(D_i)$.

\begin{align*}
	\{F(a),F(b)\}&=\sum_{i,j\geq0}\{\frac{1}{\eta^i}D_{i}(a)X^{-i},\frac{1}{\eta^j}D_{j}(b)X^{-j}\}\\
	&=\sum_{i,j\geq0}\frac{1}{\eta^{i+j}}\Big(\{D_i(a),D_j(b)\}X^{-i-j}+D_j(b)\{D_i(a),X^{-j}\}X^{-i}+D_{i}(a)\{X^{-i},D_j(b)\}X^{-j}\Big)\\
	&=\sum_{i,j\geq0}\frac{1}{\eta^{i+j}}\Big(\{D_i(a),D_j(b)\}X^{-i-j}+jD_j(b)\big(\al D_i(a)X+D_1D_i(a)\big)X^{-i-j-1}\\
	&\qquad\qquad\qquad\qquad\qquad\qquad\qquad-iD_i(a)\big(\al D_j(b)X+D_1D_j(b)\big)X^{-i-j-1}\Big)\\
	&=\sum_{i,j\geq0}\frac{1}{\eta^{i+j}}\Big(\{D_i(a),D_j(b)\}+j\al D_i(a)D_j(b)-iD_i(a)\al D_j(b)\Big)X^{-i-j}\\
	&\quad+\sum_{i,j\geq0}\frac{1}{\eta^{i+j}}\Big(jD_j(b)D_1D_i(a)-iD_i(a)D_1D_j(b)\Big)X^{-i-j-1}\\
	&=\sum_{i,j\geq0}\frac{1}{\eta^{i+j}}\Big(\{D_i(a),D_j(b)\}+i\al D_j(a)D_i(b)-iD_i(a)\al D_j(b)\Big)X^{-i-j}\\
	&\quad+\sum_{i,j\geq0}\frac{1}{\eta^{i+j}}\Big(j(i+1)D_j(b)D_{i+1}(a)-i(j+1)D_i(a)D_{j+1}(b)\Big)X^{-i-j-1}\\
	&=\sum_{t\geq0}\frac{1}{\eta^t}\sum_{i+j=t}\Big(\{D_i(a),D_j(b)\}+i\big(\al D_j(a)D_i(b)-D_i(a)\al D_j(b)\big)\Big)X^{-t}\\
	&\quad+\sum_{j,l\geq1}\frac{jl}{\eta^{j+l-1}}D_j(b)D_l(a)X^{-j-l}-\sum_{i,k\geq1}\frac{ik}{\eta^{i+k-1}}D_i(a)D_k(b)X^{-i-k}\\
	&=\sum_{t\geq0}\frac{1}{\eta^t}D_t(\{a,b\})X^{-t}\\
	&=F(\{a,b\}).
\end{align*}

	Finally we use Axiom (A3) and the iterativity of $(D_i)$ to show that $\{X,F(a)\}=F\big(\al(a)\big)X$. Indeed, we have:
\begin{align*}
	\{X,F(a)\}&=\sum_{i\geq0}\frac{1}{\eta^i}\{X,D_i(a)\}X^{-i}\\
	&=\sum_{i\geq0}\frac{1}{\eta^i}\big(\al \big(D_i(a)\big)X+D_1D_i(a)\big)X^{-i}\\
	&=\sum_{i\geq0}\frac{1}{\eta^i}\al \big(D_i(a)\big)X^{-i+1}+\sum_{i\geq0}\frac{1}{\eta^i}(i+1)D_{i+1}(a)X^{-i}\\
	&=\sum_{i\geq0}\frac{1}{\eta^i}\al \big(D_i(a)\big)X^{-i+1}+\sum_{i\geq1}\frac{\eta}{\eta^i}iD_i(a)X^{-i+1}\\
	&=\sum_{i\geq0}\frac{1}{\eta^i}\big(\al \big(D_i(a)\big)+i\eta D_i(a)\big)X^{-i+1}\\
	&=\sum_{i\geq0}\frac{1}{\eta^i}D_i\big(\al(a)\big)X^{-i+1}\\
	&=F\big(\al(a)\big)X.
\end{align*}
\end{proof}

We are now ready to state the main result of this section.

\begin{thm}
\label{iso}
Let $A[X;\al,\de]_P$ be a Poisson-Ore extension, where $A$ is a Poisson $\K$-algebra. Suppose that $\de$ extends to an iterative, locally nilpotent higher $(\eta,\al)$-skew Poisson derivation $(D_i)$ on $A$ with $\eta\in\K^{\times}$. Then the algebra homomorphism $F: A\rightarrow A[X^{\pm1}]$ defined by:
	\[F(a)=\sum_{i\geq 0}\frac{1}{\eta^i}D_i(a)X^{-i}\]
uniquely extends to a Poisson $\K$-algebra isomorphism:
	\[F: A[Y^{\pm1};\al]_P\stackrel{\cong}{\longrightarrow} A[X^{\pm1};\al,\de]_P\]
by setting $F(Y)=X$.
\end{thm}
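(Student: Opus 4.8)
The plan is to build the inverse of $F$ explicitly and check it is a two-sided Poisson-algebra inverse, which will simultaneously establish that $F$ is bijective and respects all structure. First I would define $G : A[X^{\pm1};\al,\de]_P \to A[Y^{\pm1};\al]_P$ as the algebra homomorphism extending $a \mapsto \sum_{i\geq 0}(-1/\eta)^i D_i(a) Y^{-i}$ (finite sum by local nilpotence) together with $X \mapsto Y$. One must first verify $G$ is a well-defined algebra homomorphism: the only relation to respect is $\{X,a\}=\al(a)X+\de(a)$, but since $A[X^{\pm1};\al,\de]_P$ is a localisation of a polynomial ring, $G$ is automatically a well-defined \emph{algebra} map (no relations in the commutative polynomial ring), so the content is entirely in checking that $G$ respects the Poisson bracket. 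The computation that $G$ is a Poisson homomorphism is essentially the mirror image of the computation in Proposition \ref{ddm} (with $\eta$ replaced by $-\eta$, and using that $(D_i)$ is iterative and satisfies (A2), (A3)), together with the verification $G(\{X,a\}) = \{G(X),G(a)\}$, i.e. $G(\al(a)X + \de(a)) = \{Y, G(a)\}$; expanding the right-hand side with $\{Y,Y^{-i}\}=0$ and $\{Y, D_i(a)\}$ computed via (A3)-type manipulations gives the claim, analogously to the last display of the proof of Proposition \ref{ddm}.

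Next I would check that $F$ and $G$ are mutually inverse. It suffices to check this on the algebra generators, namely on $Y$ (resp. $X$) and on elements $a \in A$. On $Y$: $G(F(Y)) = G(X) = Y$, and $F(G(X)) = F(Y) = X$. On $a \in A$: I would compute
\[
G(F(a)) = G\Big(\sum_{i\geq 0}\tfrac{1}{\eta^i}D_i(a)X^{-i}\Big) = \sum_{i\geq 0}\tfrac{1}{\eta^i} G(D_i(a)) Y^{-i} = \sum_{i,j\geq 0} \tfrac{1}{\eta^i}\Big(\tfrac{-1}{\eta}\Big)^j D_j D_i(a) Y^{-i-j},
\]
and then use iterativity $D_jD_i = \binom{i+j}{i}D_{i+j}$ to collect the coefficient of $D_n(a)Y^{-n}$ as $\tfrac{1}{\eta^n}\sum_{i+j=n}\binom{n}{i}(-1)^j = \tfrac{1}{\eta^n}(1-1)^n = \delta_{n,0}$, so $G(F(a)) = a$. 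The same binomial cancellation shows $F(G(a)) = a$. Hence $G \circ F = \mathrm{id}$ and $F \circ G = \mathrm{id}$ on generators, so they agree with the identity everywhere, and $F$ is an algebra isomorphism.

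To finish, I would assemble the pieces: $F$ is an algebra isomorphism by the previous paragraph; $F$ is a Poisson homomorphism on $A$ by Proposition \ref{ddm}; $F$ sends $Y$ to $X$ and satisfies $\{F(Y),F(a)\} = \{X,F(a)\} = F(\al(a))X = F(\{Y,a\})$, again by Proposition \ref{ddm}; since $A$ together with $Y$ (and $Y^{-1}$) generates $A[Y^{\pm1};\al]_P$ as an algebra and the Poisson bracket is determined by the Leibniz rule from its values on a generating set, $F$ respects the full Poisson bracket. Uniqueness of the extension is clear since $F$ is already prescribed on $A$ and on $Y$, which generate. The main obstacle I anticipate is purely bookkeeping: carrying out the bracket computation for $G$ cleanly — in particular handling the cross terms $\{D_i(a), X^{-j}\}$ which produce both an $\al$-term and a $\de = D_1$-term (exactly as in the proof of Proposition \ref{ddm}), and making sure the $D_1 D_i = (i+1)D_{i+1}$ reindexing and the sign powers of $-\eta$ combine correctly so that the result matches $G(\{a,b\})$. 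There is no conceptual difficulty beyond what is already done for $F$; the symmetry $\eta \leftrightarrow -\eta$ is what makes $G$ the inverse, and the binomial identity $\sum_i \binom{n}{i}(-1)^{n-i} = 0$ for $n>0$ is the key combinatorial input.
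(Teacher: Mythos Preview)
Your proposal is correct and gives a valid proof, but it differs from the paper's argument in how bijectivity of $F$ is established. The paper does not build an inverse; instead it shows injectivity by a leading-term argument (the top-degree coefficient of $F(\sum a_iY^i)$ is $a_m$) and surjectivity by an induction on the nilpotency index $l$ of $a\in A$, using iterativity in the form $D_{l-i}D_i(a)=\binom{l}{i}D_l(a)=0$ to conclude each $D_i(a)$ is already in the image. Your approach instead constructs an explicit inverse $G$ and verifies $G\circ F=F\circ G=\id$ on generators via the binomial identity $\sum_{i+j=n}\binom{n}{i}(-1)^j=\delta_{n,0}$; this is slightly more constructive (you get the inverse for free) and arguably cleaner. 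Both approaches use iterativity as the key input, just packaged differently.

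Two remarks on streamlining your write-up. First, checking that $G$ is a Poisson homomorphism is unnecessary: once you know $F$ is a bijective algebra map and a Poisson homomorphism (both of which you establish), its set-theoretic inverse is automatically a Poisson homomorphism, so you can drop the first paragraph's bracket computation for $G$ entirely. Second, your description of the $G$-computation as ``the mirror image of Proposition~\ref{ddm} with $\eta$ replaced by $-\eta$'' is slightly misleading: the target of $G$ is $A[Y^{\pm1};\al]_P$, which has no $\de$-term in $\{Y,-\}$, so the $D_1D_i$ cross terms that appear and cancel in Proposition~\ref{ddm} simply never arise for $G$; the computation is shorter, not symmetric. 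This doesn't affect correctness, but if you do choose to include the $G$-is-Poisson verification, be aware the bookkeeping is not literally the same.
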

\begin{proof}
	Clearly $F$ extends uniquely to a $\K$-algebra homomorphism from $A[Y^{\pm1}]$ to $A[X^{\pm1}]$ by setting $F(Y)=X$. In view of Proposition \ref{ddm} we know  that $F(\{a,b\})=\{F(a),F(b)\}$ for all $a,b\in A$. Moreover, we have:
	\[F(\{Y,a\})=F(\al(a)Y)=F\big(\al(a)\big)F(Y)=F\big(\al(a)\big)X=\{X,F(a)\}=\{F(Y),F(a)\}\]
for all $a\in A$. Thus $F$ is a Poisson homomorphism from $A[Y^{\pm1};\al]_P$ to $A[X^{\pm1};\al,\de]_P$. 

	To conclude we show that $F$ is bijective. First, let $f\in A[Y^{\pm1}]$ be a nonzero Laurent polynomial. We can write $f=\sum_{i=l}^{m}a_iY^i$, where $l,m$ are two integers with $l\leq m$, and $a_i\in A$ for all $i\in\{l,\dots,m\}$ with $a_m\neq0$. Observing that 
	\[F(a_iY^i)=\sum_{k\geq0}\frac{1}{\eta^k}D_k(a_i)X^{i-k}=a_iX^i+\sum_{k\geq1}\frac{1}{\eta^k}D_k(a_i)X^{i-k}\]
for all $i$, we can write $F(f)=a_mX^m+\sum_{i=j}^{m-1}b_iX^i$, for some $j<m$ and where $b_i\in A$ for all $j\leq i<m$. Thus $F(f)\neq0$, and $F$ is injective.
	
 	 For the surjectivity, we already have $F(Y^{\pm1})=X^{\pm1}$, so we just need to check that $A\subset \text{Im}(F)$. Let $a\in A$. Since $(D_i)$ is locally nilpotent, there exists $l\geq 0$ such that $D_l(a)=0$. If $l\leq 1$, we have $F(a)=a$ and so $a\in \text{Im}(F)$. Assume $l>1$ and write $F(a)=a+\sum_{i=1}^{l-1}\frac{1}{\eta^i}D_i(a)X^{-i}$. Since $D_{l-i}D_{i}(a)=\binom{l}{i}D_{l}(a)=0$ for $i=1,\dots,l-1$, we have $D_{i}(a)\in\text{Im}(F)$ for all $i=1,\dots,l-1$ (we proceed by induction on $l$). Thus $F(a)-a$ is in the image of $F$ and so does $a$. Thus $F$ is surjective.
\end{proof}

\subsection{Case where a torus acts rationally: $H$-equivariance of the deleting derivation homomorphism}
\label{H action}

	Let $A$ be a finitely generated Poisson $\K$-algebra. Suppose that a torus $H$ is acting by Poisson $\K$-algebra automorphisms on a Poisson-Ore extension $A[X;\al,\de]_P$ such that $H(A)=A$. We suppose that the indeterminate $X$ is an $H$-eigenvector and that $H$ commutes with the derivation $\al$. Let $h\in H$ and set $h(X)=\mu X$ for a scalar $\mu\in\K^{\times}$. Then $H$ is also acting by automorphisms on $A[Y;\al]_P$ via:
	\[h\big(\sum_{i=0}^{n}a_iY^i\big)=\sum_{i=0}^{n}h(a_i)\mu^{i}Y^i\]
for all $h\in H$. Note that $h(Y)=\mu Y$. Moreover this action respects the Poisson bracket of $A[Y;\al]_P$ since
	\[h(\{Y,a\})=h(\al(a)Y)=h(\al(a))h(Y)=\mu\al(h(a))Y=\mu\{Y,h(a)\}=\{h(Y),h(a)\}.\]
		
	These $H$-actions extend uniquely by localisation on $A[X^{\pm1};\al,\de]_P$ and $A[Y^{\pm1};\al]_P$ since $X$ and $Y$ are $H$-eigenvectors. With a desire of clarity, we sometimes distinguish between the actions of $h\in H$ on $A[X^{\pm1};\al,\de]_P$ and $A[Y^{\pm1};\al]_P$ by using subscripts: $h_X$ and $h_Y$. The following lemma gives conditions under which these actions commute with the deleting derivation homomorphism $F$ defined at the beginning of Section \ref{ddh}.
\begin{figure}[!ht]
	\centering
		\begin{tikzpicture}
	\node (a) at (0,0){$A[X^{\pm1};\al,\de]_P$};
	\node (b) at (4,0){$A[X^{\pm1};\al,\de]_P$};
	\node (c) at (0,-2){$A[Y^{\pm1};\al]_P$};
	\node (d) at (4,-2){$A[Y^{\pm1};\al]_P$};
	\node (aa) at (2,0.2){$h_X$};
	\node (bb) at (-0.4,-1){$F$};
	\node (cc) at (2,-2.2){$h_Y$};
	\node (dd) at (4.3,-1){$F$};
	\draw [->] (a) to (b);\draw [->] (c) to (d);
	\draw [->] (c) to (a);\draw [->] (d) to (b);
		\end{tikzpicture}
		\caption{}
	\label{diag3}
\end{figure}

\begin{lem}
\label{com}
Suppose that $\de$ extends to a higher $(\eta,\al)$-skew Poisson derivation $(D_i)$ on $A$ with $\eta\in\K^{\times}$. We denote by $\{a_1,\dots,a_l\}$ a set of generators of $A$. If for all $n\geq0$ and all $1\leq i\leq l$ we have 
	\[h\big(D_n(a_i)\big)=\mu^nD_n\big(h(a_i)\big)\]
then $h_XF=Fh_Y$, that is the diagram of Figure \ref{diag3} is commutative.
\end{lem}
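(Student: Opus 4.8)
The plan is to reduce the commutativity of the diagram in Figure \ref{diag3} to checking it on a generating set of $A[Y^{\pm1};\al]_P$, and then to verify the identity on those generators by a direct computation using the defining formula for $F$. Since $F$, $h_X$ and $h_Y$ are all $\K$-algebra homomorphisms (indeed $F$ is a Poisson isomorphism by Theorem \ref{iso}, and the torus actions are Poisson automorphisms), the compositions $h_XF$ and $Fh_Y$ are both $\K$-algebra homomorphisms $A[Y^{\pm1};\al]_P\to A[X^{\pm1};\al,\de]_P$. Two algebra homomorphisms agree as soon as they agree on a set of algebra generators; here $A[Y^{\pm1};\al]_P$ is generated as a $\K$-algebra by $Y$, $Y^{-1}$ and the generators $a_1,\dots,a_l$ of $A$. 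So it suffices to check $h_XF(Y)=Fh_Y(Y)$, the analogous identity on $Y^{-1}$, and $h_XF(a_i)=Fh_Y(a_i)$ for each $i$.

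The verification on $Y^{\pm1}$ is immediate: $F(Y^{\pm1})=X^{\pm1}$, $h_Y(Y^{\pm1})=\mu^{\pm1}Y^{\pm1}$, and $h_X(X^{\pm1})=\mu^{\pm1}X^{\pm1}$, so both sides equal $\mu^{\pm1}X^{\pm1}$. For a generator $a_i$ of $A$, I would expand
\[
h_XF(a_i)=h_X\Bigl(\sum_{n\geq0}\frac{1}{\eta^n}D_n(a_i)X^{-n}\Bigr)=\sum_{n\geq0}\frac{1}{\eta^n}h\bigl(D_n(a_i)\bigr)\mu^{-n}X^{-n},
\]
using that $X$ is an $H$-eigenvector with $h(X)=\mu X$ and that $h$ acts on $A$. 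On the other hand,
\[
Fh_Y(a_i)=F\bigl(h(a_i)\bigr)=\sum_{n\geq0}\frac{1}{\eta^n}D_n\bigl(h(a_i)\bigr)X^{-n}.
\]
Comparing coefficients of $X^{-n}$ (the powers $X^{-n}$ being $\K$-linearly independent in the localisation), these two expressions coincide precisely when $\mu^{-n}h(D_n(a_i))=D_n(h(a_i))$ for all $n\geq0$, i.e. when $h(D_n(a_i))=\mu^n D_n(h(a_i))$ — which is exactly the hypothesis of the lemma. This closes the argument.

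The main subtlety is not a computational obstacle but a bookkeeping one: one must be slightly careful that $F$ really extends to a well-defined $\K$-algebra homomorphism on the Laurent ring (which is guaranteed by Theorem \ref{iso}), that $h_X$ and $h_Y$ have been correctly transported to the localisations (legitimate since $X$ and $Y$ are $H$-eigenvectors, as noted in the paragraph preceding the lemma), and that the locally nilpotent hypothesis makes each sum $\sum_n \frac{1}{\eta^n}D_n(a_i)X^{-n}$ finite so that the coefficient comparison is valid term by term. Once these points are in place, the proof is a one-line reduction to generators followed by the coefficient matching above; no appeal to the Poisson structure beyond knowing that all maps involved are algebra homomorphisms is actually needed for this particular lemma.
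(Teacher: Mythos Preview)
Your proof is correct and follows essentially the same approach as the paper: reduce to generators, check the identity on $Y$ (and $Y^{-1}$) directly, and on each $a_i$ by expanding $F(a_i)$ and comparing coefficients of $X^{-n}$ using the hypothesis. The paper's version is slightly terser (it does not spell out the reduction-to-generators step or the check on $Y^{-1}$), but the argument is the same.
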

\begin{proof}
For all $1\leq i\leq l$ we have
\begin{align*}
	h_X\big(F(a_i)\big)&=\sum_{k\geq0}\frac{1}{\eta^k}h_X\big(D_k(a_i)\big)h_X(X^{-k})\\
	&=\sum_{k\geq0}\frac{1}{\eta^k}\mu^kD_k\big(h_Y(a_i)\big)\mu^{-k}X^{-k}\\
	&=F\big(h_Y(a_i)\big),
\end{align*}
since $h_X(a)=h(a)=h_Y(a)\in A$ for all $a\in A$. We conclude by noting that 
	\[h_X\big(F(Y)\big)=h_X(X)=\mu X=\mu F(Y)=F(\mu Y)=F\big(h_Y(Y)\big).\]
\end{proof}

\section{Quadratic Poisson Gel'fand-Kirillov problem}

	In this section we give a positive answer to the quadratic Poisson Gel'fand-Kirillov problem (see Introduction) for Poisson algebras satisfying suitable conditions (see Section \ref{PGK}) and some of their quotients (see Section \ref{PGKQ}). This is achieved through repeated use of the characteristic-free Poisson deleting derivation homomorphism constructed in Section 2. In Section 3.1 we give some preliminary results which show that, after deleting the last derivation in an iterated Poisson-Ore extension, moving the last variable in first position does not affect the existences and properties of the needed higher Poisson derivations corresponding to the other variables. This is crucial as it allows for repeated use of the characteristic-free Poisson deleting derivation homomorphism in order to prove the main result of Section 3.2, namely Theorem \ref{abc}. This theorem shows that, under suitable assumptions, there is a Poisson algebra isomorphism between the field of fractions of an iterated Poisson-Ore extension and a Poisson affine space, i.e. the iterated Poisson-Ore extension under consideration satisfies the quadratic Poisson Gel'fand-Kirillov problem. 
 
  Concerning Poisson prime factors of an iterated Poisson-Ore extension $A$, Theorem \ref{abc} tells us that they satisfy the quadratic Poisson Gel'fand-Kirillov problem if the corresponding Poisson prime factors of the Poisson affine space $B$ do (Assertion (2)). In characteristic zero, a Poisson prime factor of a Poisson affine space is always Poisson birationally equivalent to a Poisson affine space over a purely transcendental extension of the base field, see \cite[Theorem 3.3]{GL}. However in prime characteristic this is not clear anymore, and we restrict ourselves to the Poisson prime ideals which are also invariant under the action of a torus $H$. In Section 3.3 we show that, under mild hypotheses, there is actually only finitely many $H$-invariant Poisson prime ideals in a Poisson affine space. Moreover, we explicitly describe all these ideals. As a consequence, the corresponding quotient algebras of $B$ satisfy the quadratic Poisson Gel'fand-Kirillov problem, and so we conclude from Theorem \ref{abc} that all the $H$-invariant Poisson prime  quotient algebras of $A$ satisfy the quadratic Poisson Gel'fand-Kirillov problem.

\subsection{Preliminaries}
	In order to extend the results of the previous section to iterated Poisson-Ore extensions, we need to know the behaviour of a higher Poisson derivation when reordering the variables. This is the objective of the next two lemmas.
\begin{lem}
\label{cas2}
 Let $A$ be a Poisson $\K$-algebra and $R=A[X;\al,\de]_P[Y^{\pm1};\be]_P$ be an iterated Poisson-Ore extension, where $\be(A)\subseteq A$ and $\be(X)=\la X$ for $\la\in \K$.
 \begin{enumerate}
   \item Then $R=A[Y^{\pm1};\be']_P[X;\al',\de']_P$, where $\be'=\be|_{A}$, $\al'|_{A}=\al$, $\de'|_{A}=\de$, $\al'(Y)=-\la Y$ and $\de'(Y)=0$.
   \item If $\de\al=\al\de+\eta\de$ in $A$, then $\de'\al'=\al'\de'+\eta\de'$ in $A[Y^{\pm1};\be]_P$.
   \item Suppose further that $\de$ extends to a higher $(\eta,\al)$-skew Poisson derivation $(D_i)$ on $A$ and that $\be D_i=D_i\be+i\la D_i$ for all $i\geq0$. Then $\de'$ extends to a higher $(\eta,\al')$-skew Poisson derivation $(D_i')$ on $A[Y^{\pm1};\be]_P$ such that the restriction of $D_i'$ to $A$ coincides with $D_i$ for all $i\geq0$, and $D_i'(Y)=0$ for all $i>0$.
		\item Keeping the assumptions of 3 above, we have
		\begin{enumerate}
							\item If $(D_i)$ is iterative, then $(D_i')$ is iterative.
							\item If $(D_i)$ is locally nilpotent, then $(D_i')$ is locally nilpotent.
						\end{enumerate}
 \end{enumerate}
\end{lem}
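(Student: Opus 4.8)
The plan is to verify each of the four assertions more or less directly, since the content of the lemma is bookkeeping: we are simply commuting a Poisson-Ore variable $X$ past an invertible Poisson-Ore variable $Y$ and tracking how the defining data $(\al,\de)$ and the higher derivation $(D_i)$ transform. For part (1), I would start from the bracket relations in $R=A[X;\al,\de]_P[Y^{\pm1};\be]_P$, namely $\{X,a\}=\al(a)X+\de(a)$, $\{Y,a\}=\be(a)Y$ for $a\in A$, and $\{Y,X\}=\be(X)Y=\la XY$. Then I reinterpret $R$ with $Y^{\pm1}$ adjoined first: $A[Y^{\pm1};\be']_P$ is a legitimate Poisson-Ore (Laurent) extension because $\be'=\be|_A$ is a Poisson derivation of $A$. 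The candidate maps $\al',\de'$ on $A[Y^{\pm1};\be']_P$ are defined by $\al'|_A=\al$, $\de'|_A=\de$, $\al'(Y)=-\la Y$, $\de'(Y)=0$; one must check that $\al'$ is a Poisson derivation and $\de'$ is a Poisson $\al'$-derivation (using Theorem \ref{Oh}), and that the resulting bracket $\{X,-\}=\al'(-)X+\de'(-)$ reproduces the original bracket, in particular $\{X,Y\}=\al'(Y)X+\de'(Y)=-\la XY$, which matches $-\{Y,X\}=-\la XY$. The compatibility conditions to be verified are Leibniz/Jacobi-type identities on the generators $a\in A$ and $Y^{\pm1}$, and these reduce, via the hypotheses $\be(A)\subseteq A$ and $\be(X)=\la X$, to the identities already known to hold in $A$; the sign on $\al'(Y)$ is exactly what is forced by skew-symmetry.

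For part (2), I would compute $\de'\al'$ and $\al'\de'$ on a generating set of $A[Y^{\pm1};\be]_P$. On $A$ the identity $\de'\al'=\al'\de'+\eta\de'$ is just the given $\de\al=\al\de+\eta\de$. On $Y$ we have $\al'(Y)=-\la Y$ and $\de'(Y)=0$, so both sides annihilate $Y$ (indeed $\de'\al'(Y)=\de'(-\la Y)=0$ and $(\al'\de'+\eta\de')(Y)=0$), and since both sides are derivations agreeing on algebra generators, they agree everywhere. For part (3), the natural approach is to define $D_i'$ on $A[Y^{\pm1};\be]_P$ as the unique derivation-type extension with $D_i'|_A=D_i$ and $D_i'(Y)=0$ for $i>0$ (and $D_0'=\id$); concretely $D_i'$ is determined on monomials $aY^m$ by the higher-Leibniz rule (A1) together with $D_i'(Y^{\pm1})=\de_{i,0}Y^{\pm1}$. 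Then I must check Axioms (A1), (A2), (A3) for $(D_i')$ relative to $\al'$ and $\eta$. Axiom (A1) holds by construction. For (A2), the bracket on $A[Y^{\pm1};\be]_P$ mixes $A$-brackets with $\be$-terms; the crucial input is the extra hypothesis $\be D_i=D_i\be+i\la D_i$, which is precisely what makes the $\be$-contributions in $D_n'(\{aY^k,bY^l\})$ telescope correctly against the $\al'(Y)=-\la Y$ terms — this is the analogue, for the variable $Y$, of condition (A3) for $\al$. For (A3), i.e. $D_i'\al'=\al'D_i'+i\eta D_i'$, one checks it on $A$ (where it is the assumed (A3) for $(D_i)$) and on $Y$ (both sides give $0$ for $i>0$, since $D_i'(Y)=0$ and $\al'(Y)=-\la Y$ with $D_i'(-\la Y)=0$). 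Part (4) is then routine: iterativity $D_i'D_j'=\binom{i+j}{i}D_{i+j}'$ is checked on generators — on $A$ it is inherited from $(D_i)$, and on $Y$ both sides vanish for $i+j>0$ — and extends by the Leibniz-type structure; local nilpotence follows from Lemma \ref{ln} applied to the finite generating set $\{a_1,\dots,a_k,Y,Y^{-1}\}$ of $A[Y^{\pm1};\be]_P$, since $(D_i')$ is locally nilpotent on each $a_j$ (being $D_i$ there) and on $Y^{\pm1}$ (where it is eventually zero, in fact zero for $i>0$).

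The main obstacle, and the one place where the extra hypothesis of part (3) earns its keep, is verifying Axiom (A2) for $(D_i')$ on brackets of the form $\{aY^k, bY^l\}$ with $a,b\in A$: one must expand $D_n'$ of this bracket using (A1), expand the bracket using the Poisson-Ore relations for $Y$, and see that the terms involving $\be$ (coming from $\{Y^k,b\}=k\be(b)Y^k$ etc.) reorganize into the shape demanded by (A2) with $\al'$ in place of $\al$. The identity $\be D_i=D_i\be+i\la D_i$ is exactly the commutation rule needed to push the $D_i$'s past the $\be$'s at the cost of the scalar $i\la$, which then cancels against the $\al'(Y)=-\la Y$ contributions; without it the computation would not close. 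Everything else is a matter of checking derivation identities on a generating set, which I would state as such rather than write out in full.
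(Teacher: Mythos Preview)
Your plan follows the same route as the paper: define $D_i'$ on $A[Y^{\pm1};\be']_P$ by $D_i'(aY^l)=D_i(a)Y^l$, then verify (A1)--(A3), iterativity, and local nilpotence, with (A2) being the only nontrivial step and the hypothesis $\be D_i=D_i\be+i\la D_i$ doing exactly the work you describe. The paper carries out (A2) by a direct monomial computation on $\{aY^k,bY^l\}$, and your analysis of why the $\be$-terms must cancel against the $\al'(Y)=-\la Y$ contributions is the right diagnosis.

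One point of sloppiness to fix: in several places you propose to ``check on generators and extend.'' For part (2) your phrasing ``both sides are derivations'' is false---neither $\de'\al'$ nor $\al'\de'+\eta\de'$ is a derivation---though the \emph{difference} $[\de',\al']-\eta\de'$ is, so the generator argument is salvageable once stated correctly. More seriously, for (A3) and for iterativity in (4a) you cannot argue this way at all: for $i\ge 2$ the map $D_i'$ is not a derivation, so an identity like $D_i'\al'=\al'D_i'+i\eta D_i'$ or $D_i'D_j'=\binom{i+j}{i}D_{i+j}'$ does not propagate from generators to products by any Leibniz mechanism. The paper avoids this by checking these identities directly on the monomial basis $aY^l$ using the explicit formula $D_i'(aY^l)=D_i(a)Y^l$, which is linear and immediate. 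You should do the same.
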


\begin{proof}
1. Since $\be(A)\subseteq A$ and $\{X,Y\}=-\la XY$ we can switch the variables $X$ and $Y$ in the expression of $R$ as a Poisson-Ore extension over $A$. The new maps we get are those described in 1.

\noindent 2. We only check the equality on a monomial $aY^i\in A[Y^{\pm1}]$ since the derivations involved are $\K$-linear.
\begin{align*}
	\de'\al'(aY^i)&=\de'(\al'(a)Y^i+a\al'(Y^i))\\
	&=\de'(\al'(a)Y^i)+\de'(-i\la aY^i)\\
	&=\de'(\al'(a)Y^i)-i\la(\de'(a)Y^i+\de'(Y^i)a)\\
	&=(\de\al(a)-i\la\de(a))Y^i\\
	&=\big(\al\de(a)+\eta\de(a)-i\la\de(a)\big)Y^i\\
	&=(\al'\de'+\eta\de')(aY^i).
\end{align*}

\noindent 3. Define a sequence of $\K$-linear maps $D_i': A[Y^{\pm1};\be']_P\rightarrow A[Y^{\pm1};\be']_P$ for all $i\geq0$ by
	\[D_i'\Big(\sum\limits_{j=-m}^{m}a_jY^j\Big)=\sum\limits_{j=-m}^{m}D_i(a_j)Y^j.\]
We check that $(D_i')$ is a higher $(\eta,\al')$-skew Poisson derivation on $A[Y^{\pm1};\be']_P$ satisfying all conditions of 3.
	 First, it is clear that $D_i'(a)=D_i(a)$ for all $a\in A$. Moreover $D_i'(Y)=D_i(1)Y=0$ for $i>0$ and $D_0'=\id$ on $A[Y^{\pm1};\be']_P$. The following computation shows that $\de'$ extends to $(D_i')$:
	\[D_1'\Big(\sum\limits_{j=-m}^{m}a_jY^j\Big)=\sum\limits_{j=-m}^{m}D_1(a_j)Y^j=\sum\limits_{j=-m}^{m}\de(a_j)Y^j=\de'\Big(\sum\limits_{j=-m}^{m}a_jY^j\Big).\]

It just remains to establish Axioms (A1), (A2) and (A3) of Definition \ref{hd} on monomials of $A[Y^{\pm1}]$ (since the Poisson bracket is $\K$-bilinear and the $D_i'$ and the $D_i$ are $\K$-linear maps).

First, for all $a,b\in A$ and all $i,j\in\Z$:
\begin{align*}
	D_n'\big((aY^i)(bY^j)\big)&=D_n(ab)Y^{i+j}\\
	&=\sum_{k=0}^{n}D_k(a)D_{n-k}(b)Y^{i+j}\\
	&=\sum_{k=0}^{n}D_k'(aY^i)D_{n-k}'(bY^j).
\end{align*}
Hence Axiom (A1) is proved. Next
\begin{align*}
D_n'(\{aY^i,bY^j\})&=D_n'\big[\big(\{a,b\}+i\be'(b)a-j\be'(a)b\big)Y^{i+j}\big]\\
&=\big[D_n(\{a,b\})+iD_n(\be(b)a)-jD_n(\be(a)b)\big]Y^{i+j}\\
&=\sum\limits_{k=0}^{n}\Big[\{D_k(a),D_{n-k}(b)\}+k\big(\al D_{n-k}(a)D_k(b)-D_k(a)\al D_{n-k}(b)\big)\Big]Y^{i+j}\\
&\qquad+i\sum\limits_{k=0}^{n}D_{n-k}(a)D_k\be(b)Y^{i+j}\\
&\qquad-j\sum\limits_{k=0}^{n}D_{n-k}(b)D_k\be(a)Y^{i+j},
\end{align*}
whereas
\begin{align*}
&\quad\sum\limits_{k=0}^{n}\{D_k'(aY^i),D_{n-k}'(bY^j)\}+k\big(\al'D_{n-k}'(aY^i)D_k'(bY^j)-D_k'(aY^i)\al'D'_{n-k}(bY^j)\big)\\
&=\sum\limits_{k=0}^{n}\Big(\{D_k(a),D_{n-k}(b)\}+iD_k(a)\be'D_{n-k}(b)-j\be'D_k(a)D_{n-k}(b)\Big)Y^{i+j}\\
&\qquad+\sum\limits_{k=0}^{n}kD_k(b)\big(\al D_{n-k}(a)Y^i+D_{n-k}(a)\al'(Y^i)\big)Y^j\\
&\qquad-\sum\limits_{k=0}^{n}kD_k(a)\big(\al D_{n-k}(b)Y^j+D_{n-k}(b)\al'(Y^j)\big)Y^i\\
&=\sum\limits_{k=0}^{n}\Big(\{D_k(a),D_{n-k}(b)\}+iD_{k}(a)\be D_{n-k}(b)-j\be D_k(a)D_{n-k}(b)\Big)Y^{i+j}\\
&\qquad+\sum\limits_{k=0}^{n}kD_k(b)\big(\al D_{n-k}(a)-i\la D_{n-k}(a)\big)Y^{i+j}\\
&\qquad-\sum\limits_{k=0}^{n}kD_k(a)\big(\al D_{n-k}(b)-j\la D_{n-k}(b)\big)Y^{i+j}\\
&=\sum\limits_{k=0}^{n}\Big(\{D_k(a),D_{n-k}(b)\}+k\big(\al D_{n-k}(a)D_k(b)-D_k(a)\al D_{n-k}(b)\big)\Big)Y^{i+j}\\
&\qquad+i\sum\limits_{k=0}^{n}D_{k}(a)\be D_{n-k}(b)Y^{i+j}-i\la\sum\limits_{k=0}^{n}kD_{n-k}(a)D_k(b)Y^{i+j}\\
&\qquad-j\sum\limits_{k=0}^{n}D_{n-k}(b)\big(\be D_k(a)-k\la D_k(a)\big)Y^{i+j}\\
&=\sum\limits_{k=0}^{n}\Big(\{D_k(a),D_{n-k}(b)\}+k\big(\al D_{n-k}(a)D_k(b)-D_k(a)\al D_{n-k}(b)\big)\Big)Y^{i+j}\\
&\qquad+i\sum\limits_{k=0}^{n}D_{n-k}(a)\big(\be D_{k}(b)-k\la D_k(b)\big)Y^{i+j}\\
&\qquad-j\sum\limits_{k=0}^{n}D_{n-k}(b)\big(\be D_k(a)-k\la D_k(a)\big)Y^{i+j}.
\end{align*}
(In the last step of this computation we used a change of variable $k'=n-k$ in the second sum). Since $\be D_k-\la kD_k=D_k\be$ for all $k\geq0$, Axiom (A2) is established.
And finally, we get Axiom (A3) by computing:
\begin{align*}
(\al'D_i'+i\eta D_i')(aY^l)&=\al'\big(D_i(a)Y^l\big)+i\eta D_i(a)Y^l\\
&=\big(\al D_i(a)-\la lD_i(a)+i\eta D_i(a)\big)Y^l\\
&=\big(D_i\al(a)-\la lD_i(a)\big)Y^l\\
&=D_i(\al(a)-\la la)Y^l\\
&=D_i'\big((\al(a)-\la la)Y^l\big)\\
&=D_i'\big(\al'(aY^l)\big),
\end{align*}
for all $i\geq 0$ and $l\in\Z$.

\noindent 4(a).	If $(D_i)$ is iterative on $A$, then
	\[D_i'D_j'(aY^l)=D_i'(D_j(a)Y^l)=D_iD_j(a)Y^l=\binom{i+j}{j}D_{i+j}(a)Y^l=\binom{i+j}{j}D_{i+j}'(aY^l)\]
for all $a\in A$, $l\in\Z$ and $i,j\geq0$. Hence $(D_i')$ is iterative on $A[Y^{\pm1};\be']_P$.\\
4(b).	Suppose that $(D_i)$ is locally nilpotent on $A$. Using Lemma \ref{ln} we only need to check that $(D_i')$ is locally nilpotent on a set of generators of  $A[Y^{\pm1}]$. We take $A\cup\{Y^{\pm1}\}$. For all $a\in A$ and $i\geq0$ we have $D_i'(a)=D_i(a)$, so that $(D_i')^n(a)=0$ for $n>>0$. Moreover $D_i'(Y)=0$ (which implies $D_i'(Y^{-1})=0$) for all $i>0$. The result is shown.
\end{proof}

	Lemma \ref{cas2} can be generalised as follows.

\begin{lem}
\label{casn}
Let $A$ be a Poisson $\K$-algebra and set $R=A[X_1;\al_1,\de_1]_P\cdots[X_n;\al_n,\de_n]_P[Y^{\pm1};\be]_P$, where $\be(A)\subseteq A$ and $\be(X_i)=\la_iX_i$ with $\la_i\in \K$ for all $1\leq i\leq n$. Let \\
$R_j=A[X_1;\al_1,\de_1]_P\cdots[X_j;\al_j,\de_j]_P$ for $j=1,\dots,n$ and $R_0=A$.
\begin{enumerate}
    \item Then $R=A[Y^{\pm1};\be']_P[X_1;\al_1',\de_1']_P\cdots[X_n;\al_n',\de_n']_P$, where $\be'=\be|_A$, $\al_{i}'|_{R_j}=\al_i$, $\de_i'|_{R_j}=\de_i$, $\al_i'(Y)=-\la_i Y$ and $\de_i'(Y)=0$ for all $i=1,\dots,n$ and $j=0,\dots,i-1$.
    \item Set $R_j'=A[Y^{\pm1};\be']_P[X_1;\al_1',\de_1']_P\cdots[X_j;\al_j',\de_j']_P$. For all $i$, if $\de_i\al_i=\al_i\de_i+\eta_i\de_i$ on $R_{i-1}$, then $\de_i'\al_i'=\al_i'\de_i'+\eta_i\de_i'$ on $R_{i-1}'$.
    \item Suppose that each $\de_i$ extends to a higher $(\eta_i,\al_i)$-skew Poisson derivation $(D_{i,k})_{k=0}^\infty$, and that $\be D_{i,k}=D_{i,k}\be+k\la_iD_{i,k}$ on $R_{i-1}$ for all $i$ and $k$. Then each $\de_i'$ extends to a higher $(\eta_i,\al_i')$-skew Poisson derivation $(D_{i,k}')_{k=0}^\infty$ on $R'_{i-1}$, where $D_{i,k}'$ coincides with $D_{i,k}$ on $R_j$, for $j<i$, and $D_{i,k}'(Y)=0$ for $k>0$.
    \item Keeping the assumptions of 3 above, we have
     \begin{enumerate}
        		\item If $(D_{i,k})_{k=0}^\infty$ is iterative, then $(D_{i,k}')_{k=0}^\infty$ is iterative.
        		\item If $(D_{i,k})_{k=0}^\infty$ is locally nilpotent, then $(D_{i,k}')_{k=0}^\infty$ is locally nilpotent.
        	\end{enumerate}
\end{enumerate}
\end{lem}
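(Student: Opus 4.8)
The plan is to obtain Lemma \ref{casn} as an iterated application of Lemma \ref{cas2}, proceeding by induction on $n$. The base case $n=1$ is exactly Lemma \ref{cas2} (with $A$, $X$, $Y$ there being $A$, $X_1$, $Y$ here). For the inductive step, I would view $R$ as a Poisson-Ore extension in the last variable over the ring $R_{n-1}[Y^{\pm1};\be]_P$, but it is cleaner to peel off the \emph{first} Ore variable $X_1$: apply Lemma \ref{cas2} with base algebra $A$, middle variable $X_1$ and invertible variable $Y$, to move $Y$ past $X_1$. This produces $R = A[Y^{\pm1};\be']_P[X_1;\al_1',\de_1']_P[X_2;\al_2,\de_2]_P\cdots[X_n;\al_n,\de_n]_P$ together with the announced formulas $\al_1'|_A=\al_1$, $\de_1'|_A=\de_1$, $\al_1'(Y)=-\la_1 Y$, $\de_1'(Y)=0$, and (by parts 2, 3, 4 of Lemma \ref{cas2}) the compatibility relation $\de_1'\al_1'=\al_1'\de_1'+\eta_1\de_1'$ on $A[Y^{\pm1};\be']_P$, the extension of $\de_1'$ to a higher $(\eta_1,\al_1')$-skew Poisson derivation $(D_{1,k}')$ with $D_{1,k}'(Y)=0$, and preservation of iterativity and local nilpotence.

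Now I would set $\widetilde A := A[Y^{\pm1};\be']_P[X_1;\al_1',\de_1']_P$ and regard $R$ as $\widetilde A[X_2;\al_2,\de_2]_P\cdots[X_n;\al_n,\de_n]_P[Y'^{\pm1}]$-style data — but here a small care is needed: $Y$ is now \emph{inside} $\widetilde A$, not the last variable, so the induction must be set up so that the roles match. The right formulation is to induct on $n$ with the statement ``for any Poisson algebra $A$ and any iterated Poisson-Ore extension $A[X_1;\al_1,\de_1]_P\cdots[X_n;\al_n,\de_n]_P[Y^{\pm1};\be]_P$ with $\be(A)\subseteq A$ and $\be(X_i)=\la_iX_i$, the conclusions 1--4 hold.'' For the step from $n-1$ to $n$: first use Lemma \ref{cas2} on $A[X_1;\al_1,\de_1]_P[Y^{\pm1};\be|_{A[X_1]}]_P$ — valid because $\be(A)\subseteq A$ forces $\be(A[X_1])\subseteq A[X_1]$ since $\be(X_1)=\la_1 X_1$ — to move $Y$ past $X_1$; this rewrites $R$ as $\bigl(A[X_1;\al_1,\de_1]_P[Y^{\pm1}]\bigr)$ reorganised as $A[Y^{\pm1};\be']_P[X_1;\al_1',\de_1']_P$ extended by $[X_2;\al_2,\de_2]_P\cdots[X_n;\al_n,\de_n]_P$, i.e.\ $R = \widetilde A[X_2;\al_2,\de_2]_P\cdots[X_n;\al_n,\de_n]_P$. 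Since $Y$ is now a genuine invertible variable sitting at the \emph{bottom} of $\widetilde A$ and the remaining $X_2,\dots,X_n$ are the $n-1$ polynomial variables on top, I can invoke the inductive hypothesis provided I treat the pair $(\widetilde A, \text{the rest})$ correctly — actually the cleanest route avoids this awkwardness: induct instead by moving $Y$ successively past $X_n$, then $X_{n-1}$, \dots, then $X_1$, applying Lemma \ref{cas2} once at each stage with base algebra $R_{j-1}$, middle variable $X_j$ and invertible variable (the current incarnation of) $Y$, checking at each stage that the hypotheses of Lemma \ref{cas2} are met and accumulating the formulas.

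Concretely, the key steps in order are: (i) observe $\be(R_j)\subseteq R_j$ for every $j$, since $\be(A)\subseteq A$ and $\be(X_i)=\la_i X_i$; (ii) apply Lemma \ref{cas2}(1) repeatedly, commuting $Y$ leftward past $X_n,X_{n-1},\dots,X_1$ in turn, each time with $(A,X,Y)\leftarrow(R_{j-1},X_j,Y)$, to arrive at the displayed form $R=A[Y^{\pm1};\be']_P[X_1;\al_1',\de_1']_P\cdots[X_n;\al_n',\de_n']_P$ and to read off $\al_i'|_{R_j}=\al_i$, $\de_i'|_{R_j}=\de_i$, $\al_i'(Y)=-\la_iY$, $\de_i'(Y)=0$ — this gives conclusion 1; (iii) for conclusion 2, apply Lemma \ref{cas2}(2) at the $i$-th stage, noting that the relation $\de_i\al_i=\al_i\de_i+\eta_i\de_i$ on $R_{i-1}$ is exactly the hypothesis needed and that the $Y$-variable has by then been absorbed into the base; (iv) for conclusion 3, apply Lemma \ref{cas2}(3) at each stage, which requires the compatibility $\be D_{i,k}=D_{i,k}\be+k\la_i D_{i,k}$ on $R_{i-1}$ that is assumed, and yields $(D_{i,k}')$ extending $\de_i'$ with $D_{i,k}'(Y)=0$ for $k>0$ and $D_{i,k}'|_{R_j}=D_{i,k}$ for $j<i$; (v) for conclusion 4, invoke Lemma \ref{cas2}(4a) and (4b) at each stage to carry iterativity and local nilpotence through. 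The only genuinely delicate point — and hence the main obstacle — is bookkeeping: one must verify that the hypotheses of Lemma \ref{cas2} persist at every stage, in particular that after $Y$ has been moved past $X_{j+1},\dots,X_n$ the derivation $\be'$ (now acting on $R_j$) still satisfies $\be'(X_i)=\la_i X_i$ for $i\le j$ and still commutes appropriately with each $D_{i,k}$, so that the next application is legitimate; since Lemma \ref{cas2} leaves the restrictions of $\al_i,\de_i,D_{i,k}$ to $R_j$ untouched and only modifies their values on the newly-commuted invertible variable, this persistence is immediate, and the induction closes. Everything else is the routine verification already packaged inside Lemma \ref{cas2}.
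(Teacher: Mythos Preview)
Your proposal is correct and matches the paper's approach: the paper simply writes ``Easy induction left to the reader,'' and what you have outlined is precisely that induction --- repeatedly applying Lemma \ref{cas2} to slide $Y$ leftward past $X_n,X_{n-1},\dots,X_1$, checking at each stage that the hypotheses (in particular $\be(R_{j-1})\subseteq R_{j-1}$ and the compatibility $\be D_{i,k}=D_{i,k}\be+k\la_iD_{i,k}$) persist because Lemma \ref{cas2} only modifies the maps on the newly-adjoined Laurent variable. Your false start (peeling off $X_1$ first) is not needed, but the final scheme you settle on is exactly what is intended.
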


\begin{proof}
Easy induction left to the reader.
\end{proof}

\subsection{Quadratic Poisson Gel'fand-Kirillov problem}
\label{PGK}

	The theorem below gives conditions under which (a quotient of) a suitable iterated Poisson-Ore extension is Poisson birationally equivalent to (a quotient of) a Poisson affine space. Recall that a \textit{Poisson prime ideal} $P$ of a Poisson algebra $A$ is a prime ideal which is also a Poisson ideal, i.e. such that $\{a,u\}\in P$ for all $a\in A$ and $u\in P$.	An ideal $I$ of a $\K$-algebra supporting a torus $H$-action by Poisson automorphisms is said \textit{$H$-invariant} if $H(I)=I$.

\begin{thm}
\label{abc}
Let $A=\K[X_1][X_2;\alpha_2,\delta_2]_P\cdots[X_n;\alpha_n,\delta_n]_P$ be an iterated Poisson-Ore extension such that each derivation $\de_i$ extends to an iterative, locally nilpotent higher $(\eta_i,\al_i)$-skew Poisson derivation $(D_{i,k})_{k=0}^{\infty}$ on 	\[A_{i-1}=\K[X_1][X_2;\alpha_2,\delta_2]_P\cdots[X_{i-1};\alpha_{i-1},\delta_{i-1}]_P,\]
where each $\eta_i$ is a nonzero scalar. Suppose furthermore that for all $1\leq j<i\leq n$ there exists $\la_{ij}\in\K$ such that $\al_i(X_j)=\la_{ij}X_j$, and $\al_iD_{j,k}=D_{j,k}\al_i+k\la_{ij}D_{j,k}$ for all $k\geq0$. Let $\boldsymbol{\la}=(\la_{ij})$ be the skew-symmetric matrix in $M_n(\K)$ whose coefficients below the diagonal are the above scalars. Then:

	(1) There exists a Poisson algebra isomorphism between $\F A$ and $\K_{\boldsymbol{\la}}(Y_1,\ldots,Y_n)$.
	
	(2) For any Poisson prime ideal $P$ in $A$, there exists a Poisson prime ideal $Q$ in $B=\K_{\boldsymbol{\la}}[Y_1,\ldots,Y_n]$ such that the fields $\F A/P$ and $\F B/Q$ are isomorphic as Poisson algebras.
	
	(3) Assume that the torus $H=(\K^{\times})^r$ is acting rationally by Poisson automorphisms on $A$ such that each $X_i$ is an $H$-eigenvector, and $B$ is endowed with the induced $H$-action (for all $h\in H$ and all $1\leq i\leq n$ there exists $\mu_i\in\K^{\times}$ such that $h(X_i)=\mu_iX_i$; then the action of $h$ on the generator $Y_i$ of $B$ is given by $h(Y_i)=\mu_iY_i$). Moreover we suppose that $h\big(D_{i,k}(X_j)\big)=\mu_i^kD_{i,k}\big(h(X_j)\big)$ for all $1\leq j<i\leq n$ and $k\geq0$. Then, for any $H$-invariant Poisson prime ideal $P$ in $A$, there exists an $H$-invariant Poisson prime ideal $Q$ in $B=\K_{\boldsymbol{\la}}[Y_1,\ldots,Y_n]$ such that the fields $\F A/P$ and $\F B/Q$ are isomorphic as Poisson algebras.
\end{thm}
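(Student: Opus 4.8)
The plan is to prove all three assertions simultaneously by induction on $n$, the number of indeterminates, using the characteristic-free Poisson deleting derivation homomorphism of Theorem \ref{iso} together with its $H$-equivariance (Lemma \ref{com}) and the reordering lemmas (Lemmas \ref{cas2} and \ref{casn}). For $n=1$ there is nothing to do: $A=\K[X_1]$ is itself the Poisson affine line (with $\boldsymbol\la=0$), every Poisson prime ideal of $\K[X_1]$ is of the form $(X_1-c)$ or $0$, and in the $H$-invariant case only $0$ and $(X_1)$ survive — all of these are already generated by (a subset of) the variable or its shift, so the conclusions hold trivially.

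For the inductive step, first apply Theorem \ref{iso} with the last Poisson-Ore extension: the deleting derivation homomorphism gives a Poisson isomorphism
\[
F:A_{n-1}[Y_n^{\pm1};\al_n]_P \stackrel{\cong}{\longrightarrow} A_{n-1}[X_n^{\pm1};\al_n,\de_n]_P,
\]
using that $\de_n$ extends to the iterative, locally nilpotent higher $(\eta_n,\al_n)$-skew Poisson derivation $(D_{n,k})$ with $\eta_n\in\K^\times$. Localising $A$ at the powers of $X_n$ and pulling back along $F$, we obtain a Poisson isomorphism between $A[X_n^{-1}]$-localised and the localisation at $Y_n$ of $A_{n-1}[Y_n^{\pm1};\al_n]_P$. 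Now I would invoke Lemma \ref{casn} to reorder: the algebra $A_{n-1}[Y_n^{\pm1};\al_n]_P$ can be rewritten as $\K[Y_n^{\pm1}][X_1;\al_1',\de_1']_P\cdots[X_{n-1};\al_{n-1}',\de_{n-1}']_P$, and crucially the hypotheses of the theorem are inherited: part (2) of Lemma \ref{casn} preserves the relation $\de_i'\al_i'=\al_i'\de_i'+\eta_i\de_i'$, part (3) produces higher $(\eta_i,\al_i')$-skew Poisson derivations $(D_{i,k}')$ extending $\de_i'$, and part (4) preserves iterativity and local nilpotence. The commutation relations $\al_i D_{j,k}=D_{j,k}\al_i+k\la_{ij}D_{j,k}$ for $j<i<n$ are unaffected since the new maps restrict to the old ones on $R_j$, and one checks directly that $\al_i'(Y_n)=-\la_{ni}Y_n$ while $D_{i,k}'(Y_n)=0$, so $Y_n$ slots in as a new ``variable zero'' with the correct quadratic bracket against the others. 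Thus the reordered algebra (after inverting $X_{n-1}$, say, or rather after we work inside fields of fractions) is, up to the harmless localisation at $Y_n$, an iterated Poisson-Ore extension of the same shape but with one fewer derivation to delete — apply the inductive hypothesis to it. Chaining the isomorphisms and passing to fields of fractions (where the localisation at $Y_n$ disappears) gives assertion (1); tracking a Poisson prime ideal $P$ through the localisation, the isomorphism $F$, the reordering, and the inductive hypothesis gives assertion (2), the key point being that each operation — localisation at a normal element, an algebra isomorphism, reindexing — sets up an inclusion-preserving bijection on Poisson primes (not meeting the localised element).

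For assertion (3), I would run the same induction but carry the torus along. One must first check that $H$ acts on $A_{n-1}[Y_n^{\pm1};\al_n]_P$ (as in Section \ref{H action}, with $h(Y_n)=\mu_n Y_n$), that this action is by Poisson automorphisms, and that $F$ is $H$-equivariant: this is exactly Lemma \ref{com}, whose hypothesis $h(D_{n,k}(a_i))=\mu_n^k D_{n,k}(h(a_i))$ on generators is assumed. After reordering, I need the induced $H$-action on $\K[Y_n^{\pm1}][X_1;\al_1',\de_1']_P\cdots$ to again satisfy the equivariance condition $h(D_{i,k}'(X_j))=\mu_i^k D_{i,k}'(h(X_j))$ for the remaining variables; since $D_{i,k}'$ agrees with $D_{i,k}$ on the $X_j$'s and kills $Y_n$ (and $h(Y_n)=\mu_n Y_n$ with $D_{i,k}'(Y_n)=0$ making that case vacuous), this follows from the original hypothesis. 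Then the inductive hypothesis yields an $H$-invariant Poisson prime $Q$ in the Poisson affine space, and tracing $P$ — which stays $H$-invariant under every step, localisation and equivariant isomorphism alike — back through the chain gives the result.

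The main obstacle is bookkeeping rather than a deep difficulty: one must be scrupulous about the localisations at $X_n$ and $Y_n$ introduced at each stage and confirm they genuinely wash out when passing to fields of fractions (so that $P\mapsto Q$ is well-defined with $Q=0$ when $P=0$), and one must verify that \emph{all} the structural hypotheses of the theorem statement — the scalars $\eta_i$, the relations $\al_iD_{j,k}=D_{j,k}\al_i+k\la_{ij}D_{j,k}$, iterativity, local nilpotence, and in part (3) the $H$-equivariance of each $D_{i,k}$ — are faithfully reproduced for the reordered, one-step-shorter iterated Poisson-Ore extension so that the induction actually closes. The reordering lemmas (Lemmas \ref{cas2}, \ref{casn}) are tailored precisely to discharge this, so the argument is essentially an orchestration of the machinery already in place; the genuinely new input at each step is just Theorem \ref{iso} and, for (3), Lemma \ref{com}.
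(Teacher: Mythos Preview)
Your outline has the right ingredients (Theorem \ref{iso}, Lemma \ref{com}, Lemmas \ref{cas2}--\ref{casn}) but the induction as you have set it up does not close, and one case for assertions (2) and (3) is missing entirely.

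First, the induction variable. You say you induct on $n$, but after applying Theorem \ref{iso} to delete $\de_n$ and then reordering via Lemma \ref{casn}, the resulting algebra
\[
\K[X_1][Y_n^{\pm1};\al_n']_P[X_2;\al_2',\de_2']_P\cdots[X_{n-1};\al_{n-1}',\de_{n-1}']_P
\]
still has $n$ indeterminates, not $n-1$. What has gone down is the number $d$ of nonzero $\de_i$'s, and only if $\de_n\neq 0$ to begin with; if $\de_n=0$ the deletion step is vacuous and nothing has decreased at all. So your phrase ``one fewer derivation to delete --- apply the inductive hypothesis'' is not licensed by an induction on $n$. The paper handles this with a nested induction on three parameters: $n$, then $d=\#\{i:\de_i\neq 0\}$, then (downward) on $t=\max\{i:\de_i\neq 0\}$. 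When $\de_n\neq 0$ one deletes it and $d$ drops; when $\de_n=0$ one reorders $X_n$ into position two, which pushes the top nonzero derivation from slot $t$ to slot $t+1$, and the downward induction on $t$ applies. Without the second and third inductions your argument loops.

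Second, and independently, for (2) and (3) you never treat the case $X_n\in P$. Your whole scheme begins by localising $A$ at $X_n$; if $X_n\in P$ then $P$ blows up to the unit ideal in $A[X_n^{-1}]$ and you have lost it. The paper deals with this by a separate case: if $X_n\in P$, one passes to $A_{n-1}$ via the surjection $A_{n-1}\to A/P$ sending $X_i\mapsto \overline{X_i}$, obtaining a Poisson prime $P'\subset A_{n-1}$ with $A/P\cong A_{n-1}/P'$, and then invokes the induction on $n$. (For (3) one checks $P'$ is $H$-invariant because the surjection is $H$-equivariant.) This is precisely where the induction on $n$ is actually used, and it is absent from your proposal.
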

\begin{proof}

	We prove these results all together by three inductions: first on $n$, second on the number $d$ of indices $i$ for which $\de_i\neq 0$ and finally on the maximum index $t$ for which $\de_t\neq 0$ (this last induction being downward). If $d=0$ then set $t:=n+1$.

	If $n=1$ or $t=n+1$ the result is shown. Indeed if $n=1$, $\F(\K[X])=\K(X)$ and if $t=n+1$, then $d=0$ and $A=\K[X_1][X_2;\al_2]_P\cdots[X_n;\al_n]_P=\K_{\bo\la}[X_1,\ldots,X_n]\cong B$. So we can assume that $n\geq2$ and $t\leq n$.

	Let $P$ be a Poisson prime ideal in $A$. Assertion (1) is satisfied when $P=Q=0$ in (2). Assertions (2) and (3) are shown simultaneously. The proof splits in three cases: first if $X_n\in P$, next if $X_n\notin P$ and $t=n$, and finally if $X_n\notin P$ and $t<n$; each case will be solved by a different induction. Note that, for all $1\leq i\leq n$, the $H$-actions on $A$ and $B$ induce, by restriction, $H$-actions on the subalgebras $A_i$ and $B_i:=\K_{\bo\la_i}[X_1,\dots,X_i]$, where $\bo\la_i$ is the upper left $i\times i$ submatrix of $\bo\la$. When $P$ is an $H$-invariant ideal of $A$ we also consider the induced action of $H$ on $A/P$. These actions are all rational actions by Poisson automorphisms, such that the generators of the algebras considered are $H$-eigenvectors.\\

\textbf{First case:} $X_n\in P$. Consider the Poisson algebra homomorphism $\Phi : A_{n-1} \rightarrow A/P$ defined by $\Phi(X_i)=\ov{X_i}$ for all $i<n$. Since $\Phi$ is surjective, there exists a Poisson prime ideal $P'=\ker(\Phi)$ in $A_{n-1}$ such that $A/P\cong A_{n-1}/P'$. Moreover it is clear that $P'$ is $H$-invariant if $P$ is $H$-invariant since the diagram of Figure \ref{diag4} is commutative for all $h\in H$.
\begin{figure}[!ht]
	\centering
		\begin{tikzpicture}
	\node (a) at (0,0){$A_{n-1}$};
	\node (b) at (4,0){$A/P$};
	\node (c) at (0,-2){$A_{n-1}$};
	\node (d) at (4,-2){$A/P$};
	\node (aa) at (2,0.2){$\Phi$};
	\node (bb) at (-0.4,-1){$h$};
	\node (cc) at (2,-2.2){$\Phi$};
	\node (dd) at (4.3,-1){$h$};
	\draw [->] (a) to (b);\draw [->] (c) to (d);
	\draw [->] (a) to (c);\draw [->] (b) to (d);
		\end{tikzpicture}
		\caption{}
	\label{diag4}
\end{figure}
By the first induction (on $n$), there exists an ($H$-invariant if $P$ is $H$-invariant) Poisson prime ideal $Q'$ in the algebra $B_{n-1}$ such that $\F A_{n-1}/P'\cong \F B_{n-1}/Q'$. Observe that $Q=Q'+BY_n$ is an ($H$-invariant if $P$ is $H$-invariant) Poisson prime ideal in $B$ such that $B_{n-1}/Q'\cong B/Q$. Thus $\F A/P \cong \F B/Q$.\\

\textbf{Second case:} $X_n\notin P$ and $t=n$. So $\de_n\neq0$. Set $A'=A_{n-1}[Y;\al_n]_P$. Since $\de_n$ extends to an iterative, locally nilpotent higher $(\eta_n,\al_n)$-skew Poisson derivation $(D_{n,k})_{k=0}^{\infty}$ on $A_{n-1}$, it follows from Proposition \ref{iso} that $A_{n-1}[X_n^{\pm1};\al_n,\de_n]_P\cong A_{n-1}[Y^{\pm1};\al_n]_P$ and so $A[X_n^{-1}]\cong A'[Y^{-1}]$. Thus there exists a Poisson prime ideal $P'=P[X_n^{-1}]\cap A'$ in $A'$ such that $\F A/P \cong \F A'/P'$, where $P'=0$ if $P=0$. As in Section \ref{H action}, the action of $H$ on $A_{n-1}$ extends to $A_{n-1}[Y^{\pm1};\al_n]_P$ by setting $h(Y)=\mu_nY$ (where $\mu_n\in\K^{\times}$ is defined by $h(X_n)=\mu_nX_n$). Then, if the ideal $P$ is $H$-invariant, the ideal $P'$ is $H$-invariant since the Poisson isomorphism $A_{n-1}[X_n^{\pm1};\al_n,\de_n]_P\cong A_{n-1}[Y^{\pm1};\al_n]_P$ commutes with all $h\in H$ (choose $\{X_1,\dots,X_{n-1}\}$ for a generating set of $A_{n-1}$ and apply Lemma \ref{com} with $A_{n-1}$ as coefficient ring). Finally, the number of nonzero maps among $\de_2,\ldots,\de_{n-1}$ is $d-1$, so the induction step (on $d$) gives the result for $\F A'/P'$ and so for $\F A/P$.\\

\textbf{Third case:} $X_n\notin P$ and $t<n$. Thus $\de_n=0$. By Lemma \ref{casn} we can write $A[X_n^{-1}]$ in the form 
\begin{align*}
	A[X_n^{-1}]=\K[X_1][X_n^{\pm1};\al_n']_P[X_2;\alpha_2',\delta_2']_P\cdots[X_{n-1};\alpha_{n-1}',\delta_{n-1}']_P 
\end{align*}
where $\al'_i(X_j)=\la_{ij}X_j$ for $j<i$ and $j=n$, and each $\de_i'$ extends to an iterative, locally nilpotent higher $(\eta_i,\al_i')$-skew Poisson derivation $(D_{i,k}')_{k=0}^{\infty}$ on
	\[A_{i-1}':=\K[X_1][X_n^{\pm1};\al_n']_P[X_2;\al_2',\de_2']_P\cdots[X_{i-1};\al_{i-1}',\de_{i-1}']_P.\]
It is clear that we have $\al'_iD'_{j,k}=D'_{j,k}\al'_i+k\la_{ij}D'_{j,k}$ and $h\big(D'_{i,k}(X_j)\big)=\mu_i^kD'_{i,k}\big(h(X_j)\big)$ for all $1\leq j<i\leq n$, all $k\geq0$ and all $h\in H$, since by Lemma \ref{casn} we have:
\[D'_{i,k}(X_j)=\left\{
    \begin{array}{lll}
         D_{i,k}(X_j) \qquad &j<i\text{ and }k\geq0,\\
         X_n &j=n\text{ and }k=0,\\
         0 & j=n\text{ and }k\geq1.
    \end{array}\right.\]
	We can now use the induction hypothesis since the derivation $\de_t'$ is nonzero ($\de_t'$ restrict to $\de_t$) and occurs in position $t+1$ in the list $0,0,\de_2',\ldots,\de_{n-1}'$. And thus the induction on $t$ allows to conclude.
\end{proof}

	By Example \ref{exgl}, when $\car\K=0$, the hypotheses of \cite[Theorem 3.9]{GL} imply those of our Theorem \ref{abc} (except Assertion 3). Hence Assertions 1 and 2 of our Theorem \ref{abc} generalise \cite[Theorem 3.9]{GL} to any characteristic.
	
\subsection{Quadratic Poisson Gel'fand-Kirillov problem for quotients by $H$-invariant Poisson prime ideals}
\label{PGKQ}

	Assertions 2 and 3 of Theorem \ref{abc} tells us that $H$-invariant Poisson prime factors of the iterated Poisson-Ore extensions under consideration are Poisson birationally isomorphic to $H$-invariant Poisson prime factors of Poisson affine spaces. In this section, we go one step further and prove that these factor algebras satisfy the quadratic Poisson Gel'fand-Kirillov problem under some mild assumptions on the torus action (Hypothesis \ref{hyp}) and the base field $\K$. 

	More precisely, set $[\mspace{-2 mu} [1,n] \mspace{-2 mu} ]:=\{1,\dots,n\}$ and $E:=\mathscr{P}([\mspace{-2 mu} [1,n] \mspace{-2 mu} ])$, the set of subsets of $[\mspace{-2 mu} [1,n] \mspace{-2 mu} ]$. The key is to show that, under a suitable $H$-action, the only $H$-invariant Poisson prime ideals of a Poisson affine space $B=\K_{(\la_{ij})}[Y_1,\dots,Y_n]$ are the ideals $J_w:=<Y_i\ |\ i\in w>$, where $w\in E$. This is achieved in Section \ref{hinv}. As a consequence the $H$-invariant Poisson prime factors of $B$ are again Poisson affine spaces over $\K$, and therefore satisfy the quadratic Poisson Gel'fand-Kirillov problem. We conclude from Theorem \ref{abc} that $H$-invariant Poisson prime factors of the iterated Poisson-Ore extensions considered also satisfy the quadratic Poisson Gel'fand-Kirillov problem.
	
	From now on, we require that the field $\K$ contains at least one element which is not a root of unity, or that $\K$ is algebraically closed.

\subsubsection{Assumptions on the $H$-action}
\label{ass}

In this section we recall some classical facts on rational torus action and present the hypotheses we need in the following section.

	Let $r>0$. Suppose that the torus $H=(\K^{\times})^r$ is acting rationally by Poisson automorphisms on the iterated Poisson-Ore extension $A=\K[X_1][X_2;\al_2,\de_2]_P\cdots[X_n;\al_n,\de_n]_P$ such that each $X_i$ is an $H$-eigenvector, and suppose that there exist scalars $\la_{ij}$ for all $1\leq j<i\leq n$ such that $\al_i(X_j)=\la_{ij}X_j$. The rational character group $X(H)$ of $H$ is identified with the group $\Z^r$ via the bijection
\begin{align*}
	\Z^r\qquad &\longrightarrow \qquad\qquad X(H)\\
	\underline{x}=(x_1,\dots,x_r)&\longmapsto  \Big((h_1,\dots,h_r)\longmapsto h_1^{x_1}\cdots h_r^{x_r}\Big).
\end{align*}

	Since $H$ is a torus, the rationality of the action means that $A$ is the direct sum of its $H$-eigenspaces, and the corresponding eigenvalues are rational characters of $H$ (i.e. they are homomorphisms of algebraic varieties $(\K^{\times})^r\rightarrow \K^{\times}$), see \cite[Theorem II.2.7]{BG}. For $1\leq i\leq n$ we denote by $\underline{f}_i\in\Z^r$ the character associated to $X_i$. For $\ul{\mu}=(\mu_1,\dots,\mu_r)\in\Z^r$ and $\ul{\nu}=(\nu_1,\dots,\nu_r)\in\Z^r$, we set $(\ul{\mu}|\ul{\nu}):=\sum_{i=1}^r\mu_i\nu_i$.

	In the following we restrict our attention on Poisson algebras satisfying Hypothesis \ref{hyp}. In Section \ref{examples} we will present many examples of such algebras.
\begin{hypo}
\label{hyp}
For all $1\leq i\leq n$, there exists $\ul{\gam}_i\in\Z^r$ such that 
\begin{itemize}
	\item $\la_{ij}=(\ul{\gam}_i|\ul{f}_j)$ for all $1\leq j<i$;
	\item $\rho_i:=(\ul{\gam}_i|\ul{f}_i)\in\K^{\times}$.
\end{itemize}
\end{hypo}

	Form the skew-symmetric matrix $\boldsymbol{\la}\in M_n(\K)$ whose coefficients below the diagonal are the $\la_{ij}$ and, as in Assertion 3 of Theorem \ref{abc}, endow $B=\K_{\bo\la}[Y_1,\dots,Y_n]$ with the rational $H$-action by Poisson automorphisms induced by the $H$-action on $A$. Note that for all $1\leq i\leq n$, the indeterminate $Y_i$ is an $H$-eigenvector with associated character $\underline{f}_i\in\Z^r$.

\subsubsection{$H$-invariant ideals in Poisson affine spaces}
\label{hinv}

	For $w\in E$ we set $\ov{w}:=[\mspace{-2 mu} [1,n] \mspace{-2 mu} ]\setminus w$. Assume $\ov{w}\neq\emptyset$. We denote by $S_w$ the multiplicative set of $B/J_w$ generated by the $Y_i+J_w$ for $i\in \ov{w}$, and consider the algebra
	\[T=(B/J_w)S_w^{-1}.\]
We set $\ov{w}:=\{l_1,\dots,l_s\}$, where $1\leq l_1<\dots<l_s\leq n$ and $s\in\{1,\dots,n\}$. For all $i\in\{1,\dots,s\}$, set $U_i:=Y_{l_i}+J_w$ and for all $1\leq j<i\leq n$, set $\la'_{ij}:=\la_{l_i l_j}$.
Then $T$ is the Poisson torus $T=\K_{(\la'_{ij})}[U_1^{\pm1},\dots,U_s^{\pm1}]$, where $(\la'_{ij})$ is the skew-symmetric matrix whose coefficients under the diagonal are the scalars $\la'_{ij}$ defined previously.
	
	Since the ideal $J_w$ and the multiplicative set $S_w$ are generated by $H$-eigenvectors, the torus $H$ is acting rationally by Poisson automorphisms on $T$ and for all $1\leq i\leq s$ the indeterminate $U_i$ is an $H$-eigenvector with associated character $\ul{u}_i:=\ul{f}_{l_i}$. Moreover for all $i\in\{1,\dots,s\}$, we set $\ul{\gam}'_i:=\ul{\gam}_{l_i}$ and $\rho'_i:=\rho_{l_i}$. Thus we have $\la'_{ij}=(\ul{\gam}'_i|\ul{u}_j)$ for all $1\leq j<i\leq s$ and $\rho'_i=(\ul{\gam}'_i|\ul{u}_i)\in\K^{\times}$ for all $1\leq i\leq s$.
	
\begin{lem}
\label{key}
	Let $(m_1,\dots,m_s)\in\Z^s\setminus(0,\dots,0)$ and suppose that $U:=U_1^{m_1}\cdots U_s^{m_s}$ is a Poisson central element in $T$. Then there exists $h \in H$ such that $h(U)=\varepsilon U$ with $\varepsilon\in\K\setminus\{0,1\}$.
\end{lem}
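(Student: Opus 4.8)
The plan is to recast the statement as a combinatorial assertion about the $H$-weight of the monomial $U$ and then to extract that assertion from Poisson-centrality. The first step is to reduce to showing that the weight $\ul v:=\sum_{i=1}^s m_i\ul u_i\in\Z^r$ of $U$ is nonzero. Indeed, each $U_i$ being an $H$-eigenvector of weight $\ul u_i$, the monomial $U$ is an $H$-eigenvector of weight $\ul v$, i.e. $h(U)=\ul v(h)\,U$ for all $h=(h_1,\dots,h_r)\in H$, where $\ul v(h)=h_1^{v_1}\cdots h_r^{v_r}$. As $\ul v(h)$ is automatically a nonzero scalar, the conclusion amounts to producing $h\in H$ with $\ul v(h)\neq1$. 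If $\ul v\neq0$, I would pick $j$ with $v_j\neq0$, restrict the action to the one-parameter subgroup $t\mapsto(1,\dots,1,t,1,\dots,1)$ ($t$ in position $j$), so that $\ul v$ becomes $t\mapsto t^{v_j}$, and choose $t\in\K^\times$ with $t^{v_j}\neq1$: such a $t$ exists because $\K$ contains an element which is not a root of unity, or because $\K$ is algebraically closed (hence infinite, so $\{t:t^{v_j}=1\}$ is finite). Thus it suffices to prove $\ul v\neq0$ in $\Z^r$.

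Next I would turn Poisson-centrality into a linear condition. Let $\tilde\la'=(\tilde\la'_{ij})\in M_s(\K)$ be the skew-symmetric matrix extending the family $(\la'_{ij})_{j<i}$, so that $\{U_i,U_j\}=\tilde\la'_{ij}U_iU_j$ in $T$. By the Leibniz rule, $\{U,U_k\}=\bigl(\sum_{i=1}^s m_i\tilde\la'_{ik}\bigr)U U_k$ for every $k$, hence $U$ is Poisson central in $T$ if and only if $\sum_{i=1}^s m_i\tilde\la'_{ik}=0$ for all $1\le k\le s$.

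Assume now, for a contradiction, that $\ul v=0$, and let $k_0$ be the largest index with $m_{k_0}\neq0$ (it exists since $\ul m\neq0$). From $\ul v=0$ and $m_i=0$ for $i>k_0$ we get $\sum_{i<k_0}m_i\ul u_i=-m_{k_0}\ul u_{k_0}$. Inserting this into the centrality relation at $k=k_0$ — the terms with $i\ge k_0$ drop out because $m_i=0$ for $i>k_0$ and $\tilde\la'_{k_0 k_0}=0$ — and using $\tilde\la'_{ik_0}=-\la'_{k_0 i}=-(\ul\gam'_{k_0}\,|\,\ul u_i)$ for $i<k_0$ together with $(\ul\gam'_{k_0}\,|\,\ul u_{k_0})=\rho'_{k_0}$, the relation collapses to
\[0=\sum_{i<k_0}m_i\tilde\la'_{ik_0}=-\sum_{i<k_0}m_i(\ul\gam'_{k_0}\,|\,\ul u_i)=-\bigl(\ul\gam'_{k_0}\,\bigm|\,{\textstyle\sum_{i<k_0}m_i\ul u_i}\bigr)=(\ul\gam'_{k_0}\,|\,m_{k_0}\ul u_{k_0})=m_{k_0}\rho'_{k_0},\]
so $m_{k_0}\rho'_{k_0}=0$ in $\K$, and since $\rho'_{k_0}\in\K^\times$ this forces $m_{k_0}\cdot 1_\K=0$.

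If $\car\K=0$, this gives $m_{k_0}=0$, contradicting the choice of $k_0$; hence $\ul v\neq0$ and the lemma follows from the first step. If $\car\K=p>0$, one only obtains $p\mid m_{k_0}$; running the same computation at every index $k$ (using the full relation $\sum_i m_i\tilde\la'_{ik}=0$ and a downward induction on $k$) then yields $p\mid m_k$ for all $k$, i.e. $\ul m\in p\Z^s$. The hard part will be to exclude this last possibility, so as to reach the contradiction $\ul m\neq0$: I expect this to be the genuinely characteristic-dependent step of the argument, resting on the hypotheses in force (which, in the situations where the lemma gets applied, should guarantee e.g. that $\car\K$ does not divide $\gcd(m_1,\dots,m_s)$); granting it, the proof concludes exactly as in the characteristic zero case.
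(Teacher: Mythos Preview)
Your reduction to showing that the $H$-weight $\ul v=\sum_i m_i\ul u_i$ is nonzero in $\Z^r$, and your translation of Poisson centrality into the relations $\sum_i m_i\tilde\la'_{ik}=0$ in $\K$, are both correct; in characteristic~$0$ the contradiction argument is complete. But the positive-characteristic gap you flag is real and cannot be closed in the way you hope. The hypotheses say nothing about $\gcd(m_1,\dots,m_s)$: the exponent vector $\ul m$ is arbitrary input to the lemma, and if $U$ is Poisson central then so is $U^p$, with exponent vector $p\ul m$, so any valid argument must cover that case too. Your proposed downward induction also breaks: for $k<k_0$ the centrality relation at $k$ involves entries $\tilde\la'_{ik}$ with $i>k$, and Hypothesis~\ref{hyp} only expresses $\la'_{ik}=(\ul\gam'_i\,|\,\ul u_k)$ for $i>k$, not as a pairing against $\ul\gam'_k$, so you cannot isolate a term $m_k\rho'_k$ as you did at $k_0$.

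The paper does not argue by contradiction on $\ul v$. Instead, with $s$ the largest index such that $m_s\neq0$ and $\ul\gam'_s=(\mu_1,\dots,\mu_r)$, it exhibits the explicit element $h_s:=(q^{\mu_1},\dots,q^{\mu_r})\in H$ and computes $h_s(U_i)=q^{\la'_{si}}U_i$ for $i<s$ and $h_s(U_s)=q^{\rho'_s}U_s$, so that $h_s(U)=q^{\sum_{i<s}m_i\la'_{si}+\rho'_s m_s}U$. Poisson centrality (bracket with $U_s$) is then used to discard the sum $\sum_{i<s}m_i\la'_{si}$, leaving the eigenvalue $q^{\rho'_s m_s}$; since $\rho'_s$ and $m_s$ are both nonzero integers, the field assumption produces $q$ with $q^{\rho'_s m_s}\neq1$. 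This direct construction bypasses your inductive scheme entirely---though you may notice that dropping $\sum_{i<s}m_i\la'_{si}$ from the \emph{integer} exponent via an identity that centrality only furnishes in $\K$ is precisely the characteristic-$p$ subtlety you ran into, in a different guise.
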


\begin{proof} We can assume that $m_s$ is nonzero. Otherwise replace $s$ by the largest $i$ such that $m_i\neq0$ in the following. Start by noting that $U\in Z_p(T)$ implies that $0=\{U,U_s\}=\big(\sum_{i<s}m_i\la'_{si}\big)UU_s$, i.e. $\sum_{i<s}m_i\la'_{si}=0$. 

	Let $i<s$. Set $\ul{\gam}'_s:=(\mu_1,\dots,\mu_r)\in\Z^r$. Thus we have $\la'_{si}=\sum_{j=1}^r \mu_j\nu_j$ with the notation $\ul{u}_i:=(\nu_1,\dots,\nu_r)\in\Z^r$.
	Let $q\in\K^{\times}$ and set $h_s:=(q^{\mu_1},\dots,q^{\mu_r})\in H$. Still identifying $X(H)$ with $\Z^r$, we have 
\begin{align*}
	h_s(U_i)=\ul{u}_i(h_s)U_i=(q^{\mu_1})^{\nu_1}\cdots(q^{\mu_r})^{\nu_r}U_i=q^{\la'_{si}}U_i
\end{align*}
for all $i<s$, and
	\[h_s(U_s)=\ul{u}_s(h_s)U_s=q^{(\ul{\gam}'_s|\ul{u}_s)}U_s=q^{\rho'_s}U_s.\]	
	So $h_s(U)=q^{\sum_{i<s}m_i\la'_{si}}q^{\rho'_s m_s}U=q^{\rho'_s m_s}U$. By the assumptions on the ground field made at the beginning of Section \ref{PGKQ}, we can choose $q$ such that $q^{\rho'_s m_s}\neq1$, and the result is shown.
\end{proof}

The following result comes from \cite{Vancliff}. The author was working over the base field $\C$ but the result is still true over an arbitrary infinite base field.

\begin{lem}[Vancliff]
\label{Vancliff}
	If $I$ is a Poisson ideal of $T$, then $I$ is generated by its intersection with the Poisson center of $T$.
\end{lem}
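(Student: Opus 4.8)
First, I would make the $\Z^s$-grading of the Poisson torus $T=\K_{(\la'_{ij})}[U_1^{\pm1},\dots,U_s^{\pm1}]$ explicit. For $\underline{m}=(m_1,\dots,m_s)\in\Z^s$ write $U^{\underline{m}}:=U_1^{m_1}\cdots U_s^{m_s}$; these monomials form a $\K$-basis of $T$, and a direct computation from $\{U_i,U_j\}=\la'_{ij}U_iU_j$ and the Leibniz rule shows that the Poisson bracket is homogeneous:
\[\{U^{\underline{a}},U^{\underline{b}}\}=\Lambda(\underline{a},\underline{b})\,U^{\underline{a}+\underline{b}},\qquad \Lambda(\underline{a},\underline{b}):=\sum_{i,j}\la'_{ij}a_ib_j,\]
where $\Lambda\colon\Z^s\times\Z^s\to\K$ is bilinear and skew-symmetric (with $\la'_{ji}:=-\la'_{ij}$, $\la'_{ii}:=0$). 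Hence $U^{\underline{m}}$ lies in $Z_p(T)$ exactly when $\Lambda(\underline{m},\underline{b})=0$ for all $\underline{b}\in\Z^s$, and any $\K$-linear combination of Poisson central monomials is again Poisson central. For $f=\sum_{\underline{m}}c_{\underline{m}}U^{\underline{m}}\in T$ write $\operatorname{supp}(f):=\{\underline{m}:c_{\underline{m}}\neq0\}$.

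Set $J:=T\cdot(I\cap Z_p(T))$. The inclusion $J\subseteq I$ is immediate, so everything reduces to $I\subseteq J$, which I would prove by contradiction together with an induction on the size of the support. Assume $I\setminus J\neq\emptyset$ and choose $f\in I\setminus J$ with $|\operatorname{supp}(f)|$ minimal. Since monomials are units, replacing $f$ by $U^{-\underline{m}_0}f$ for some $\underline{m}_0\in\operatorname{supp}(f)$ alters neither membership in $I$, nor membership in $J$, nor the cardinality of the support, so we may assume $\underline{0}\in\operatorname{supp}(f)$, with coefficient $c_{\underline{0}}\neq0$.

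If every $\underline{m}\in\operatorname{supp}(f)$ yields a Poisson central monomial, then $f\in Z_p(T)$, so $f\in I\cap Z_p(T)\subseteq J$, contradicting $f\notin J$. Otherwise pick $\underline{m}_1\in\operatorname{supp}(f)$ with $U^{\underline{m}_1}$ not central; then $\underline{m}_1\neq\underline{0}$, and there is $\underline{n}\in\Z^s$ with $c:=\Lambda(\underline{m}_1,\underline{n})\neq0$ in $\K$. As $I$ is a Poisson ideal and $U^{-\underline{n}}$ a unit, both
\[g:=U^{-\underline{n}}\{f,U^{\underline{n}}\}=\sum_{\underline{m}}c_{\underline{m}}\Lambda(\underline{m},\underline{n})\,U^{\underline{m}}\qquad\text{and}\qquad h:=cf-g=\sum_{\underline{m}}c_{\underline{m}}\bigl(c-\Lambda(\underline{m},\underline{n})\bigr)U^{\underline{m}}\]
lie in $I$. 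Here $\operatorname{supp}(g)\subseteq\operatorname{supp}(f)\setminus\{\underline{0}\}$ (because $\Lambda(\underline{0},\underline{n})=0$) while $g\neq0$ (its $U^{\underline{m}_1}$-coefficient is $c_{\underline{m}_1}c\neq0$); likewise $\operatorname{supp}(h)\subseteq\operatorname{supp}(f)\setminus\{\underline{m}_1\}$ (the $U^{\underline{m}_1}$-coefficient of $h$ vanishes) while $h\neq0$ (its $U^{\underline{0}}$-coefficient is $c\,c_{\underline{0}}\neq0$). Thus $g$ and $h$ are nonzero elements of $I$ with strictly smaller support than $f$, so minimality forces $g,h\in J$; then $cf=g+h\in J$, and since $c\in\K^{\times}$ we conclude $f\in J$, a contradiction. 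Hence $I\setminus J=\emptyset$, i.e. $I=T\cdot(I\cap Z_p(T))$.

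I do not expect a genuine obstacle here: once the graded picture is set up, the argument is pure bookkeeping. The only points needing a little care are the verification that the bracket has the stated monomial form, and checking at the induction step that passing from $f$ to $g$ and $h$ strictly decreases the support while keeping both elements nonzero — which is precisely what is arranged by normalising $\underline{0}$ into $\operatorname{supp}(f)$ and choosing $\underline{m}_1$ noncentral.
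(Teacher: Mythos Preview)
Your argument is correct. The paper does not actually prove this lemma; it simply attributes the result to Vancliff \cite{Vancliff} (where it is proved over $\C$) and remarks without justification that it remains valid over an arbitrary infinite base field. What you have written is therefore a genuine addition: a short, self-contained proof.

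Your approach is the natural one for a Laurent polynomial ring with a homogeneous bracket: exploit the $\Z^s$-grading, observe that $U^{-\underline{n}}\{f,U^{\underline{n}}\}$ acts as a ``separation operator'' that rescales each monomial of $f$ by $\Lambda(\underline{m},\underline{n})$, and use this to split $f$ into two pieces of strictly smaller support. The normalisation $\underline{0}\in\operatorname{supp}(f)$ is a neat device ensuring that both $g$ and $h$ remain nonzero. This is essentially the same mechanism behind the analogous statement for quantum tori (where ideals are generated by their intersection with the center), and is almost certainly what Vancliff does in the Poisson setting. One small bonus of your write-up: nothing in your argument uses that $\K$ is infinite, so you in fact obtain the lemma over an arbitrary field, which is slightly sharper than the paper's parenthetical claim.
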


\begin{prop}
\label{torus}
	If $I$ is an $H$-invariant Poisson prime ideal of $T$, then $I=\{0\}$.
\end{prop}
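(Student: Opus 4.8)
The plan is to suppose $I\neq\{0\}$ and derive a contradiction by showing that $I$ would then have to contain a Laurent monomial, hence a unit of $T$. The two inputs are Lemma \ref{Vancliff}, which reduces everything to the Poisson centre $Z_p(T)$, and Lemma \ref{key}, which will let us ``shorten'' any element of $I\cap Z_p(T)$ using the $H$-action. As a preliminary observation I would record that $Z_p(T)$ is spanned, as a $\K$-vector space, by the Poisson central monomials $U^{\underline m}:=U_1^{m_1}\cdots U_s^{m_s}$: since $\{U^{\underline m},U_i\}=\tau_i(\underline m)\,U^{\underline m}U_i$ for a linear form $\tau_i(\underline m)=\sum_j m_j\la'_{ij}$ (up to sign), writing $z=\sum_{\underline m}c_{\underline m}U^{\underline m}\in Z_p(T)$ and comparing coefficients of the linearly independent monomials $U^{\underline m+e_i}$ in $0=\{z,U_i\}$ forces $\tau_i(\underline m)=0$ for all $i$ whenever $c_{\underline m}\neq0$, i.e. every monomial occurring in $z$ is itself Poisson central.

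Now assume $I\neq\{0\}$. By Lemma \ref{Vancliff} the ideal generated by $I\cap Z_p(T)$ is $I$, so $I\cap Z_p(T)\neq\{0\}$. Choose a nonzero $z\in I\cap Z_p(T)$ with the least possible number $k$ of monomial terms, say $z=\sum_{i=1}^k c_iU^{\underline m_i}$ with $c_i\in\K^\times$ and the $\underline m_i$ pairwise distinct; by the preliminary observation each $U^{\underline m_i}$ lies in $Z_p(T)$. Since $U^{-\underline m_1}$ is a central unit of $T$, the element $U^{-\underline m_1}z$ still lies in $I\cap Z_p(T)$ and has the same number of terms, so we may assume $\underline m_1=0$, that is $z=c_1+\sum_{i\geq2}c_iU^{\underline m_i}$ with $\underline m_i\neq0$ for $i\geq2$. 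The crux is to show $k=1$: if $k\geq2$, apply Lemma \ref{key} to the nonzero central monomial $U^{\underline m_2}$ to obtain $h\in H$ with $h(U^{\underline m_2})=\varepsilon U^{\underline m_2}$ for some $\varepsilon\in\K\setminus\{0,1\}$. As each $U^{\underline m_i}$ is an $H$-eigenvector, $h(z)=c_1+c_2\varepsilon U^{\underline m_2}+\sum_{i\geq3}c_i\varepsilon_iU^{\underline m_i}$ for suitable scalars $\varepsilon_i$, and $h(z)$ lies in $I$ (by $H$-invariance) and in $Z_p(T)$ (because $h$ is a Poisson automorphism). Hence
\[
h(z)-\varepsilon z=c_1(1-\varepsilon)+\sum_{i\geq3}c_i(\varepsilon_i-\varepsilon)U^{\underline m_i}\in I\cap Z_p(T),
\]
in which the $U^{\underline m_2}$ term has cancelled; since $c_1(1-\varepsilon)\neq0$ this element is nonzero, yet it has at most $k-1$ terms, contradicting minimality. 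Therefore $k=1$, so $z=c_1U^{\underline m_1}$ is a unit of $T$ lying in $I$, forcing $I=T$, which is impossible as $I$ is a proper prime ideal. Hence $I=\{0\}$.

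The argument is short modulo the two quoted lemmas; I expect the only points requiring care to be the preliminary observation that membership in $Z_p(T)$ passes to each individual monomial, and, in the reduction step, the verification that $h(z)-\varepsilon z$ genuinely stays inside $I\cap Z_p(T)$ — which relies exactly on $h$ being a Poisson automorphism stabilising the $H$-invariant ideal $I$.
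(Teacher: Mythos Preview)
Your proof is correct and follows essentially the same strategy as the paper's: pick a nonzero Poisson-central element of $I$ with the fewest monomial terms, then use Lemma~\ref{key} together with the $H$-invariance of $I$ to produce a shorter such element, contradicting minimality. The only cosmetic differences are that you normalise so that one exponent is $\underline 0$ and then cancel a non-constant term via $h(z)-\varepsilon z$, whereas the paper works with $U^{m_k-m_1}$ and cancels the first term via $V-\nu_1^{-1}h(V)$; you also spell out the (correct) preliminary observation that each monomial of a Poisson-central element is itself Poisson central, which the paper simply asserts.
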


\begin{proof}
	Suppose $I\neq\{0\}$. By Lemma \ref{Vancliff}, there exists a nonzero Poisson central element $V\in I$. Write $V=\la_1U^{m_1}+\cdots+\la_kU^{m_k}$ with $m_1,\dots,m_k\in\Z^s$ pairwise distinct, $\la_1,\dots,\la_k\in\K^{\times}$ and $k>0$. Suppose that $V$ is chosen in such way that $k$ is minimal. If $k=1$, then $V$ is invertible and $I=T$, a contradiction, thus we suppose $k>1$.
	
	The monomials $U^{m_1},\dots,U^{m_k}$ are Poisson central, invertible and $U^{m_k}(U^{m_1})^{-1}=U^m$ with $m=m_k-m_1\in\Z^s\setminus(0\dots,0)$. Thus by Lemma \ref{key} there exists $h\in H$ such that $h(U^{m_k}(U^{m_1})^{-1})=\varepsilon U^{m_k}(U^{m_1})^{-1}$ with $\varepsilon\in\K\setminus\{0,1\}$. Since $U_1,\dots,U_s$ are $h$-eigenvectors, then so are $U^{m_1},\dots,U^{m_k}$ and we can write $h(U^{m_i})=\nu_iU^{m_i}$ with $\nu_i\in \K^{\times}$ for all $1\leq i\leq k$. Consider now the Poisson central element $W=V-\nu_1^{-1}h(V)\in I$. We have
\begin{align*}
	W=\sum_{i=1}^k\la_i(1-\nu_i\nu_1^{-1})U^{m_i}=\sum_{i=2}^k\la_i(1-\nu_i\nu_1^{-1})U^{m_i}.
\end{align*}
Since $\varepsilon U^{m_k}(U^{m_1})^{-1}=h(U^{m_k}(U^{m_1})^{-1})=\nu_k\nu_1^{-1}U^{m_k}(U^{m_1})^{-1}$ we have $\nu_k\nu_1^{-1}\neq1$ and so $W\neq0$. Thus $W$ is a nonzero Poisson central element of $I$ which can be written as a sum of at most $k-1$ monomials. This contradicts the choice of $k$.
\end{proof}

\begin{nota}
The \textit{Poisson prime spectrum of $B$}, denoted P$\Sp(B)$, is the subset of Poisson ideals in $\Sp(B)$. For all $w\in E$ we defined a subset of P$\Sp(B)$ by
\begin{align*}
	\rm{P}\Sp_\textit{w}(\textit{B})=\Big\{I\in\rm{P}\Sp(\textit{B})\ |\ I\cap \{Y_1,\dots,Y_n\}=\{Y_i\ |\ i\in \textit{w}\}\Big\}.
\end{align*}
These subsets form a partition of P$\Sp(B)$.
\end{nota}

\begin{prop}
\label{w}
The only $H$-invariant Poisson prime ideals of $B$ are the ideals
	\[J_w=<Y_i\ |\ i\in w>\]
 for all $w$ of $E$.
\end{prop}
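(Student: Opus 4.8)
The plan is to show both inclusions. First, each $J_w = \langle Y_i \mid i \in w\rangle$ is visibly a Poisson prime ideal of $B = \K_{\bo\la}[Y_1,\dots,Y_n]$: the quotient $B/J_w$ is a Poisson affine space (an integral domain), and $J_w$ is a Poisson ideal because $\{Y_j, Y_i\} = \la_{ji} Y_j Y_i \in J_w$ whenever $i \in w$. Moreover $J_w$ is generated by the $H$-eigenvectors $Y_i$ ($i\in w$), hence is $H$-invariant. So every $J_w$ is an $H$-invariant Poisson prime ideal.

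For the converse, let $P$ be an arbitrary $H$-invariant Poisson prime ideal of $B$. Set $w := \{i \mid Y_i \in P\}$, so $P \in \mathrm{P}\Sp_w(B)$ and $J_w \subseteq P$. I would first dispose of the case $\ov w = \emptyset$: then all $Y_i \in P$, so $J_w = \langle Y_1,\dots,Y_n\rangle \subseteq P$, and since $B/J_w \cong \K$ is a field, $P = J_w$. Now assume $\ov w \neq \emptyset$. Consider the quotient $B/J_w$ and the multiplicative set $S_w$ generated by the images of the $Y_i$, $i \in \ov w$; by the discussion preceding Lemma \ref{key}, the localisation $T = (B/J_w)S_w^{-1}$ is the Poisson torus $\K_{(\la'_{ij})}[U_1^{\pm1},\dots,U_s^{\pm1}]$, and $H$ acts rationally by Poisson automorphisms on $T$. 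The key point is that none of the $Y_i$ with $i \in \ov w$ lies in $P$ (by definition of $w$), and since $P$ is prime, no element of $S_w$ maps into $P/J_w$. Hence the image $I$ of $P$ in $T$ under the localisation map $B \to B/J_w \to T$ is a proper ideal; it is Poisson (Poisson brackets pass to quotients and localisations) and $H$-invariant (the localisation map is $H$-equivariant), and it is prime because $P$ is prime and contains $J_w$ and is disjoint from the relevant multiplicative set. By Proposition \ref{torus}, $I = \{0\}$.

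It remains to deduce $P = J_w$ from $I = \{0\}$. I would argue by contradiction: suppose $f \in P \setminus J_w$. Write $f$ modulo $J_w$ as an element of $B/J_w = \K_{(\la'_{ij})}[U_1,\dots,U_s]$ (a polynomial ring in the $U_i$, $i\in\ov w$); it is nonzero since $f \notin J_w$. Its image in the localisation $T$ is then nonzero as well (a nonzero polynomial stays nonzero after inverting the variables), so $I \neq \{0\}$, contradicting Proposition \ref{torus}. Therefore $P \subseteq J_w$, and combined with $J_w \subseteq P$ we get $P = J_w$, completing the proof. The main obstacle is organisational rather than computational: one has to check carefully that the localisation map $B \to T$ is $H$-equivariant and carries $P$ to a \emph{proper} prime Poisson ideal, so that Proposition \ref{torus} genuinely applies; once that is in place the rest is a routine descent of the conclusion $I = 0$ back along the localisation.
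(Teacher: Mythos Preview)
Your proposal is correct and follows essentially the same approach as the paper: both identify the stratum $w$ with $Y_i\in P$ for $i\in w$, pass to the Poisson torus $T=(B/J_w)S_w^{-1}$, and invoke Proposition~\ref{torus} to force the extended ideal to be zero, hence $P=J_w$. Your write-up is slightly more detailed (you verify explicitly that each $J_w$ is an $H$-invariant Poisson prime, and you spell out the descent $I=0\Rightarrow P=J_w$ via the localisation correspondence), but the argument is the same.
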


\begin{proof}
	Let $I$ be an $H$-invariant Poisson prime ideal of $B$. There exists $w\in E$ such that $I \in\rm{P}\Sp_\textit{w}(\textit{B})$. If $w=\{1,\dots,n\}$, then $J_w$ is a maximal ideal and thus $I=J_w$.

	Suppose $w\neq\{1,\dots,n\}$. Then $J_w\subset I$ and $I/J_w$ is a Poisson prime ideal of $B/J_w$ which does not intersect the multiplicative set $S_w$. Thus $P=(I/J_w)S_w^{-1}$ is a Poisson prime ideal of the Poisson torus $T=(B/J_w)S_w^{-1}$. Since $I$ is $H$-invariant and all elements of $S_w$ are $H$-eigenvectors, the ideal $P$ is $H$-invariant. Proposition \ref{torus} implies $P=\{0\}$ and so $I=J_w$, as desired.
\end{proof}

Combining Proposition \ref{w} and Theorem \ref{abc} we obtain the main result of this section.
	
\begin{thm}
\label{all hypo}
Let $A$ be an iterated Poisson-Ore extension satisfying all the hypotheses of Theorem \ref{abc}. Assume that Hypothesis \ref{hyp} is satisfied. Then, for any $H$-invariant Poisson prime ideal $P$ of $A$, the field of fractions $\F A/P$ is Poisson isomorphic to a Poisson affine field $\K_{\bo\la'}(Z_1,\dots,Z_m)$, where $m\leq n$ and $\bo\la'\in M_m(\K)$ is a skew-symmetric matrix.
\end{thm}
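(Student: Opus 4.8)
The plan is to combine Theorem~\ref{abc} with the classification of $H$-invariant Poisson primes of a Poisson affine space obtained in Proposition~\ref{w}; essentially all the work has already been done, so the proof amounts to chaining these results and checking that the standing hypotheses line up.

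First I would apply Assertion~(3) of Theorem~\ref{abc}. Since $A$ satisfies all the hypotheses of that theorem, for the given $H$-invariant Poisson prime ideal $P$ there is an $H$-invariant Poisson prime ideal $Q$ of the Poisson affine space $B=\K_{\bo\la}[Y_1,\dots,Y_n]$ such that $\F A/P\cong\F B/Q$ as Poisson algebras, where the skew-symmetric matrix $\bo\la=(\la_{ij})$ is the one produced by Theorem~\ref{abc} and $B$ carries the induced rational $H$-action by Poisson automorphisms (with each $Y_i$ an $H$-eigenvector of character $\ul f_i$). At this point one must make sure the $H$-action required in Assertion~(3) is exactly the one at hand: the conditions that each $X_i$ be an $H$-eigenvector and that $h\big(D_{i,k}(X_j)\big)=\mu_i^kD_{i,k}\big(h(X_j)\big)$ are part of ``the hypotheses of Theorem~\ref{abc}'', and, as set up in Section~\ref{ass}, Hypothesis~\ref{hyp} is precisely what equips $B$ with the character data $(\ul\gam_i,\rho_i)$ making $\bo\la$ compatible with the $H$-action.

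Next I would apply Proposition~\ref{w}. Under Hypothesis~\ref{hyp} together with the standing assumption on the base field made at the start of Section~\ref{PGKQ} (either $\K$ contains a non-root-of-unity or $\K$ is algebraically closed), Proposition~\ref{w} asserts that the only $H$-invariant Poisson prime ideals of $B$ are the ideals $J_w=\langle Y_i\mid i\in w\rangle$ for $w$ a subset of $\{1,\dots,n\}$. Hence $Q=J_w$ for some such $w$.

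Finally I would observe that $B/J_w$ is itself a Poisson affine space. Writing $\ov w=\{l_1<\dots<l_m\}$ with $m=|\ov w|\le n$, the quotient $B/J_w$ is the polynomial algebra in the images $Z_j:=Y_{l_j}+J_w$ ($1\le j\le m$), with Poisson bracket $\{Z_i,Z_j\}=\la_{l_il_j}Z_iZ_j$; in other words $B/J_w\cong\K_{\bo\la'}[Z_1,\dots,Z_m]$, where $\bo\la'\in M_m(\K)$ is the skew-symmetric submatrix of $\bo\la$ with rows and columns indexed by $\ov w$ (this is the algebra $T$ of Section~\ref{hinv} before the $Z_j$ are inverted; the degenerate case $w=\{1,\dots,n\}$ gives $m=0$ and $B/J_w=\K$). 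Extending the Poisson bracket to fractions, $\F B/Q=\F(B/J_w)=\K_{\bo\la'}(Z_1,\dots,Z_m)$ is a Poisson affine field, and composing with the Poisson isomorphism $\F A/P\cong\F B/Q$ of the first step yields $\F A/P\cong\K_{\bo\la'}(Z_1,\dots,Z_m)$, as required. There is no real obstacle within this proof; the substance sits upstream, in the iterated deleting-derivation argument behind Theorem~\ref{abc} and in the classification behind Proposition~\ref{w} (via Lemma~\ref{key}, Lemma~\ref{Vancliff} and Proposition~\ref{torus}). The one point deserving care is the bookkeeping: confirming that the matrix $\bo\la$ output by Theorem~\ref{abc} is the same matrix that Hypothesis~\ref{hyp} endows with the character data, so that Proposition~\ref{w} genuinely applies to $B$.
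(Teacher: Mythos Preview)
Your proposal is correct and follows exactly the same approach as the paper: apply Assertion~(3) of Theorem~\ref{abc} to obtain an $H$-invariant Poisson prime $Q$ in $B=\K_{\bo\la}[Y_1,\dots,Y_n]$ with $\F A/P\cong\F B/Q$, invoke Proposition~\ref{w} to identify $Q=J_w$, and observe that $B/J_w$ is the Poisson affine space on the submatrix $\bo\la'$. The paper's proof is essentially a three-line version of your argument, without the extra commentary on hypothesis alignment.
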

	
\begin{proof}
By Theorem \ref{abc} we have $\F A/P\cong\F B/Q$ where $B=\K_{\bo\la}[Y_1,\dots,Y_n]$ and $Q$ is an $H$-invariant Poisson prime ideal of $B$. By Proposition \ref{w} there exists $w\in E$ such that $Q=J_w$. Then $B/Q=\K_{\bo\la'}[\ov{Y_i}\ |\ i\notin w]$, where $\bo\la'$ is the skew-symmetric submatrix of $\bo\la$ obtained by deleting rows and columns indexed by $i\in w$. The result follows.
\end{proof}
	
	Theorem \ref{all hypo} is new even in characteristic zero. In the following section, we prove a result that shows that the hypotheses of Theorem \ref{all hypo} are satisfied for a large class of Poisson polynomial algebras.

\section{Quadratic Poisson Gel'fand-Kirillov problem and semiclassical limits}
\label{examples}

	In this section we give examples of Poisson $\K$-algebras satisfying the hypotheses of  Theorem \ref{all hypo}, so that they satisfy the quadratic Poisson  Gel'fand-Kirillov problem described in the introduction. These Poisson $\K$-algebras actually arise as semiclassical limits of quantum algebras described in \cite[Section 5]{HH}. In order to prove a transfer result, one needs to address the existence of higher Poisson derivations on the Poisson algebras considered. Contrary to the characteristic zero case, higher derivations in prime characteristic seem not to be well understood. However, we can ensure the existence in arbitrary characteristic by the semiclassical limit process. This mainly relies on the fact that we can define a quantum analogue of a higher derivation independently of the characteristic of the base field, as long as the deformation parameter is transcendental over the base field (this is always the case in the setting of the semiclassical limit process). Our transfer result (Theorem \ref{fin}) states, in particular, that this quantum analogue of a higher derivation induces a Poisson higher derivation on the semiclassical limit. More generally Theorem \ref{fin} gives conditions on a quantum algebra under which its semiclassical limit satisfies the quadratic Poisson Gel'fand-Kirillov problem. In Sections 4.2 and 4.3 we illustrate our results in the case of (coordinate rings of) Poisson matrix varieties, viewed as the semiclassical limits of (coordinate rings of) quantum matrices, and some of their quotients, namely (coordinate rings of) Poisson determinantal varieties.
	
	We continue to assume that the ground field $\K$ contains at least an element which is not a root of unity, or that $\K$ is algebraically closed.

\subsection{Semiclassical limit process and existence of higher Poisson derivation}

	We begin by recalling the semiclassical limit process. Let $\mathcal{R}$ be a (non necessarily commutative) integral domain over $\K[t^{\pm1}]$. The element $(t-1)$ is central and we denote by $(t-1)\mathcal{R}$ the (left and right) ideal generated by $(t-1)$. For $r,s\in\mathcal{R}$, we set $[r,s]:=rs-sr$. Assume that the algebra $R=\mathcal{R}/(t-1)\mathcal{R}$ is commutative. For convenience, the image of an element $r\in\mathcal{R}$ in the quotient algebra will be denoted alternatively by $\ov{r}$, $r+(t-1)\mathcal{R}$ or $r|_{t=1}$. Since $R$ is commutative, we have $[r,s]\in (t-1)\mathcal{R}$, and so there exists a unique element $\gam(r,s)\in\mathcal{R}$ such that $[r,s]=(t-1)\gam(r,s)=\gam(r,s) (t-1)$. We will often use the following notation for the element $\gam(r,s)$: 
	$$\frac{[r,s]}{t-1}:=\gam(r,s).$$
It is well known that one defines a Poisson bracket on $R$ by setting 
	\[\{\ov{r},\ov{s}\}:=\frac{[r,s]}{t-1}\Big|_{t=1}\]
for all $r,s\in\mathcal{R}$. The commutative algebra $R$ endowed with this Poisson bracket is called the \textit{semiclassical limit} of $\mathcal{R}$ at $t-1$. For more details on semiclassical limit we refer the reader to \cite[III.5.4]{BG} or \cite[Section 1.1.3]{Dum} for instance.

Before going any further, we need to recall the notion of $q$-integers and $q$-binomial coefficients, where $q$ is a nonzero non-root-of-unity element of $\K[t^{\pm 1}]$.  Our conventions are as follows. For all $i\geq k\geq0$ we set
\begin{align*}
	(i)_q&=q^{i-1}+q^{i-2}+\cdots+1,\\
	(i)!_q&=(i)_q(i-1)_q\cdots(1)_q,\\
	\binom{i}{k}_q&=\frac{(i)!_q}{(i-k)!_q(k)!_q}.
\end{align*}
By convention $(0)!_q=1$. In the following, we will use $q$-integers in the case where $q=t^{\eta}$ for $\eta\in\Z$.

	The following proposition gives the existence of a higher $(\eta,\al)$-skew Poisson derivation on a Poisson-Ore extension which is the semiclassical limit of an Ore extension satisfying suitable conditions. 
	
	Let $\mathcal{A}$ be a $\K[t^{\pm1}]$-algebra, let $\sigma$ be an automorphism of the $\K[t^{\pm1}]$-algebra $\mathcal{A}$ and let $\Delta$ be a $\K[t^{\pm1}]$-linear $\sigma$-derivation of $\mathcal{A}$. Recall that the Ore extension $\mathcal{R}=\mathcal{A}[x;\sigma,\Delta]$ is just the skew polynomial ring whose multiplication is defined by:
$$xa = \sigma(a) x + \Delta(a)$$ 
for all $a \in \mathcal{A}$.	
	
\begin{prop}
\label{SML}
Let $\mathcal{A}$ be a torsion free $\K[t^{\pm1}]$-algebra. Consider the Ore extension $\mathcal{R}=\mathcal{A}[x;\sigma,\Delta]$ and suppose that $R:=\mathcal{R}/(t-1)\mathcal{R}$ is a commutative $\K$-algebra. Then
\begin{enumerate}
	\item $R$ is a Poisson-Ore extension of the form $A[X;\al,\de]_P$, where $A=\mathcal{A}/(t-1)\mathcal{A}$, $X=\ov{x}$, $\al\in\D_P(A)$ and $\de$ is a Poisson $\al$-derivation of $A$. More precisely, we have 
	\[\al:=\frac{\sigma-\id}{t-1}\Big|_{t=1} \mbox{ and } \de:=\frac{\Delta}{t-1}\Big|_{t=1},\]
meaning that for all $a\in\mathcal{A}$ we have $\al(\ov{a})=\frac{\sigma(a)-a}{t-1}|_{t=1}$ and $\de(\ov{a}) =\frac{\Delta(a)}{t-1}|_{t=1}$.
	\item Suppose furthermore that $\Delta\sigma=t^{\eta}\sigma\Delta$ for some integer $\eta\in\K^{\times}$ and that
	\[\Delta^i(\mathcal{A})\subseteq (t-1)^i(i)!_{t^{\eta}} \mathcal{A}\]
for all $i\geq0$. Then $\de$ extends to an iterative, higher $(\eta,\al)$-skew Poisson derivation $(D_i)$ on $A$, which is locally nilpotent if $\Delta$ is locally nilpotent. More precisely, $D_i$ is defined by 
\[D_i(\ov{a})=\Big(\frac{\Delta^i(a)}{(t-1)^i(i)!_{t^{\eta}}}\Big)\Big|_{t=1}\]
for all $a\in\mathcal{A}$.
\end{enumerate}
\end{prop}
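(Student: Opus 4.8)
The plan is to prove part~(1) by first identifying the underlying commutative algebra and then invoking Oh's theorem (Theorem~\ref{Oh}), and to prove part~(2) by importing a $q$-Leibniz identity from the quantum Ore extension $\mathcal{R}$ and reducing it modulo $(t-1)$. For~(1): since $\mathcal{A}$ is torsion free, $\mathcal{R}=\bigoplus_{n\ge0}\mathcal{A}x^n$ is $\K[t^{\pm1}]$-torsion free and $(t-1)\mathcal{R}=\bigoplus_n(t-1)\mathcal{A}x^n$, so $R=\mathcal{R}/(t-1)\mathcal{R}=\bigoplus_n A\,\ov{x}^{\,n}$ is the polynomial ring $A[X]$ over $A=\mathcal{A}/(t-1)\mathcal{A}$ with $X=\ov x$, and the inclusion $A\hookrightarrow R$ identifies $A$ with a Poisson subalgebra of $R$ whose bracket is the semiclassical limit bracket of $\mathcal{A}$ (commutators of elements of $\mathcal{A}$ being computed inside $\mathcal{A}$). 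Commutativity of $R$ applied to $xa-ax=(\sigma(a)-a)x+\Delta(a)$ forces, upon comparing $x$-degrees, $\sigma(a)-a\in(t-1)\mathcal{A}$ and $\Delta(a)\in(t-1)\mathcal{A}$ for all $a\in\mathcal{A}$; using torsion-freeness this makes $\al=\frac{\sigma-\id}{t-1}\big|_{t=1}$ and $\de=\frac{\Delta}{t-1}\big|_{t=1}$ well-defined $\K$-linear maps on $A$ (independence of the chosen representative is a short check). Computing the semiclassical bracket then gives $\{X,\ov a\}=\al(\ov a)X+\de(\ov a)$ directly from the expression for $[x,a]$, so $R=A[X]$ is a Poisson algebra extending the bracket of $A$ with $\{X,a\}=\al(a)X+\de(a)$; Theorem~\ref{Oh} now yields at once that $\al\in\D_P(A)$, that $\de$ is a Poisson $\al$-derivation, and (by uniqueness of the bracket) that $R=A[X;\al,\de]_P$.

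For part~(2), set $q:=t^\eta$. The key input is the $q$-Leibniz formula: iterating $\Delta\sigma=q\sigma\Delta$ gives $\Delta\sigma^m=q^m\sigma^m\Delta$, and then an induction on $n$ using the $q$-Pascal identity $\binom{n+1}{i}_q=\binom{n}{i}_q+q^{\,n+1-i}\binom{n}{i-1}_q$ gives
\[\Delta^n(ab)=\sum_{i=0}^n\binom{n}{i}_q\,\sigma^{\,n-i}\!\big(\Delta^i(a)\big)\,\Delta^{\,n-i}(b)\qquad(a,b\in\mathcal{A}).\]
The divisibility hypothesis $\Delta^i(\mathcal{A})\subseteq(t-1)^i(i)!_q\mathcal{A}$ together with torsion-freeness makes $D_i(\ov a):=\ov{\Delta^i(a)/\big((t-1)^i(i)!_q\big)}$ a well-defined $\K$-linear map on $A$ with $D_0=\id$ and $D_1=\de$. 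Writing $A_i:=\Delta^i(a)/\big((t-1)^i(i)!_q\big)$ so that $\ov{A_i}=D_i(\ov a)$ (and similarly $B_j$ for $b$), and using $\binom{n}{i}_q(i)!_q(n-i)!_q=(n)!_q$, the $q$-Leibniz formula becomes $\Delta^n(ab)=(n)!_q(t-1)^n\sum_i\sigma^{\,n-i}(A_i)B_{\,n-i}$; dividing by $(t-1)^n(n)!_q$ and reducing at $t=1$ (where $\sigma$ acts as the identity modulo $(t-1)$) yields axiom~(A1) of Definition~\ref{hd}. Axiom~(A3) follows from $(t-1)\Delta^i\big(\frac{\sigma(a)-a}{t-1}\big)=q^{\,i}\sigma\Delta^i(a)-\Delta^i(a)=(t-1)^i(i)!_q\big(q^{\,i}\sigma(A_i)-A_i\big)$, after splitting $q^{\,i}\sigma(A_i)-A_i=(q^{\,i}-1)\sigma(A_i)+(\sigma(A_i)-A_i)$ and using $\frac{q^{\,i}-1}{t-1}\big|_{t=1}=i\eta$. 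Iterativity follows from $\Delta^i\Delta^j=\Delta^{i+j}$ once one observes $D_iD_j(\ov a)=\ov{\binom{i+j}{i}_q}\,D_{i+j}(\ov a)$ and that the Laurent polynomial $\binom{i+j}{i}_q$ specialises at $t=1$ to the ordinary binomial coefficient $\binom{i+j}{i}$; and local nilpotence of $(D_i)$ when $\Delta$ is locally nilpotent is immediate from the defining formula.

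The remaining axiom~(A2) is where the real computation lies, and I expect it to be the main obstacle. Applying the $q$-Leibniz formula to $ab$ and to $ba$ and subtracting gives $\Delta^n([a,b])=(n)!_q(t-1)^nS$ with $S=P-Q$, where $P=\sum_i\sigma^{\,n-i}(A_i)B_{\,n-i}$ and, after reindexing $i\mapsto n-i$, $Q=\sum_i\sigma^{\,i}(B_{\,n-i})A_i$; since $[a,b]=(t-1)\gam(a,b)$, this gives $D_n(\{\ov a,\ov b\})=\ov{S/(t-1)}$. The device is to interpolate $P_0:=\sum_i A_iB_{\,n-i}$ and $Q_0:=\sum_i B_{\,n-i}A_i$ and to write $S=(P-P_0)+(P_0-Q_0)+(Q_0-Q)$: each of the three differences lies in $(t-1)\mathcal{A}$, with $(P_0-Q_0)/(t-1)=\sum_i\gam(A_i,B_{n-i})$ reducing to $\sum_i\{D_i(\ov a),D_{n-i}(\ov b)\}$, while $\ov{(P-P_0)/(t-1)}=\sum_i(n-i)\,\al\big(D_i(\ov a)\big)D_{n-i}(\ov b)$ and $\ov{(Q-Q_0)/(t-1)}=\sum_i i\,\al\big(D_{n-i}(\ov b)\big)D_i(\ov a)$, using the identity $\ov{(\sigma^m-\id)(c)/(t-1)}=m\,\al(\ov c)$. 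A final reindexing $i\mapsto n-i$ in the first of these sums, together with commutativity of $A$, rewrites the total as exactly the right-hand side of~(A2). Thus $(D_i)$ is an iterative higher $(\eta,\al)$-skew Poisson derivation on $A$ extending $\de$, locally nilpotent when $\Delta$ is. The care needed in~(A2) is entirely bookkeeping — tracking which partial sums become divisible by $(t-1)$ before reducing at $t=1$, and keeping the various reindexings straight — with no new idea beyond the $q$-Leibniz identity and the definition of the semiclassical bracket.
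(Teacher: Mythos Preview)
Your proof is correct and follows essentially the same route as the paper's: both establish part~(1) by reading off $\{X,\ov a\}=\al(\ov a)X+\de(\ov a)$ from the commutator $[x,a]$, and both prove part~(2) by importing the $q$-Leibniz identity for $\Delta^n$ from $\mathcal{R}$ and reducing modulo $(t-1)$, with the verification of (A2) as the main computation. Two minor remarks: your explicit appeal to the ``only if'' direction of Theorem~\ref{Oh} to deduce that $\al\in\D_P(A)$ and $\de$ is a Poisson $\al$-derivation is a clean touch the paper leaves implicit (``one can easily check''); and your interpolation $S=(P-P_0)+(P_0-Q_0)+(Q_0-Q)$ for (A2) is just a repackaging of the paper's telescoping expansion $\sigma^{n-i}=\sum_{j=1}^{n-i}\sigma^{n-i-j}(\sigma-\id)+\id$, the underlying bookkeeping being identical.
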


\begin{proof}
	1. First note that $(t-1)\mathcal{R}=(t-1)\mathcal{A}[x;\sigma,\Delta]$, where $(t-1)\mathcal{A}$ is a $(\sigma,\Delta)$-stable ideal of $\mathcal{A}$. So the corresponding quotient algebra is of the form 
	$$R=\mathcal{R}/(t-1)\mathcal{R}=\left( \mathcal{A}/(t-1)\mathcal{A} \right)[X]=A[X].$$
	We already know that $R$ is a Poisson algebra, so it just remains to prove that $R$ is a Poisson-Ore extension. Since $R$ is commutative, for $a\in\mathcal{A}$, we have 
\begin{align*}
0=\ov{x}\ov{a}-\ov{a}\ov{x}=\ov{(\sigma(a)-a)x+\Delta(a)}=\ov{(\sigma(a)-a)}X+\ov{\Delta(a)}.
\end{align*}
So $(\sigma(a)-a)\in(t-1)\mathcal{A}$ and $\Delta(a)\in(t-1)\mathcal{A}$ for all $a\in \mathcal{A}$. The Poisson bracket between $\ov{a}\in A$ and $X$ is given by:
\begin{align*}
	\{X,\ov{a}\}=\frac{\sigma(a)-a}{t-1}\Big|_{t=1}\ X+\frac{\Delta(a)}{t-1}\Big|_{t=1}.
\end{align*}
We set 
	\[\al:=\frac{\sigma-\id}{t-1}\Big|_{t=1} \mbox{ and } \de:=\frac{\Delta}{t-1}\Big|_{t=1}.\]
	One can easily check that $\al$ and $\de$ are well defined, and that $\al\in\D_P(A)$ and $\de$ is a Poisson $\al$-derivation on $A$. Thus 
	\[\{X,\ov{a}\}=\al(\ov{a})X+\de(\ov{a})\]
for all $a\in\mathcal{A}$, and the algebra $R$ is a Poisson-Ore extension of the form $A[X;\al,\de]_P$.

	2. We claim that one defines an iterative, higher $(\eta,\al)$-skew Poisson derivation $(D_i)$ on $A$ by:
\[D_i(\ov{a})=\Big(\frac{\Delta^i(a)}{(t-1)^i(i)!_{t^{\eta}}}\Big)\Big|_{t=1}\]
for all $i\geq0$ and all $a\in\mathcal{A}$. First, since $\Delta^i(\mathcal{A})\subseteq (t-1)^i(i)!_{t^{\eta}} \mathcal{A}$, it is straightforward to see that the map $D_i$ is well defined for all $i\geq0$.
	It remains to check that $(D_i)$ satisfies all the relevant axioms of Definition \ref{hd}. Axiom (A1) follows from the fact that $\ov{\sigma(a)}=\ov{a}$ for all $a\in\mathcal{A}$. Set $d_i=\frac{\Delta^i}{(i)!_{t^{\eta}}}$ 
for all $i\geq0$. Then (A3) follows easily from the identities
	\[d_i(\sigma-\id)=t^{i\eta}(\sigma-\id)d_i+(t^{i\eta}-1)d_i\]
for all $i\geq0$. The higher derivation $(D_i)$ is iterative since $d_id_j =  \binom{i+j}{j}_{t^{\eta}} d_{i+j}$. Moreover, it is clear that $(D_i)$ is locally nilpotent if $\Delta$ is.

	The verification of (A2) involves more computations, so the details are given here. Let $u,v\in\mathcal{A}$. Then 
	one can easily check that 
	\begin{align*}
d_n (uv)=\sum\limits_{i=0}^{n}\sigma^{n-i}d_i(u)d_{n-i}(v),
\end{align*}
so that for all $a,b \in \mathcal{A}$ we have	
	\begin{align*}
d_n\Big(\frac{[a,b]}{t-1}\Big)
&=\frac{1}{(t-1)}\Big(\sum\limits_{i=0}^{n-1}\sigma^{n-i}d_i(a)d_{n-i}(b)-\sum\limits_{i=0}^{n-1}\sigma^{n-i}d_i(b)d_{n-i}(a)+d_n(a)b-d_n(b)a\Big).
\end{align*}
Observe that for $i<n$:
\begin{align*}
	\sigma^{n-i}d_i(a)d_{n-i}(b)=\sum_{j=1}^{n-i}\sigma^{n-i-j}(\sigma-\id)d_i(a)d_{n-i}(b)+d_i(a)d_{n-i}(b).
\end{align*}
Thus
\begin{align*}	
d_n\Big(\frac{[a,b]}{t-1}\Big)
&=\frac{1}{(t-1)}\sum\limits_{i=0}^{n-1}\Big(\sum_{j=1}^{n-i}\sigma^{n-i-j}(\sigma-\id)d_i(a)d_{n-i}(b)-\sum_{j=1}^{n-i}\sigma^{n-i-j}(\sigma-\id)d_i(b)d_{n-i}(a)\Big)\\
		&\qquad\qquad\qquad\qquad+\sum\limits_{i=0}^{n}\frac{[d_i(a),d_{n-i}(b)]}{(t-1)}.\\	
\end{align*}
Dividing by $(t-1)^n$, and then projecting onto $R$, we get:
\begin{align*}
D_n(\{\ov{a},\ov{b}\})
=\sum\limits_{i=0}^n\{D_i(\ov{a}),D_{n-i}(\ov{b})\}+\sum\limits_{i=1}^ni\big(\al D_{n-i}(\ov{a})D_i(\ov{b})-\al D_{n-i}(\ov{b})D_i(\ov{a})\big).
\end{align*}
This proves (A2).
\end{proof}

	We can now state the main result of this section.

\begin{thm}
\label{fin}
Let $\mathcal{R}=\K[t^{\pm1}][x_1][x_2;\sigma_2,\Delta_2]\cdots[x_n;\sigma_n,\Delta_n]$ be an iterated Ore extension over $\K[t^{\pm1}]$, and denote by $\mathcal{R}_j$ the subalgebra $\K[t^{\pm1}][x_1][x_2;\sigma_2,\Delta_2]\cdots[x_j;\sigma_j,\Delta_j]$ for $1 \leq j \leq n$. We make the following assumptions:\\
{\rm (H1)} The torus $H=(\K^{\times})^r$ is acting rationally by $\K[t^{\pm1}]$-algebra automorphisms on $\mathcal{R}$ such that for all $i\in \{1, \dots ,n\}$
	
	$\bullet$ the indeterminate $x_i$ is an $H$-eigenvector with associated character $\ul{f}_i$;
	
and	
	
	$\bullet$ there exists $\ul{\gam}_i\in\Z^r$ such that $\eta_i:=-(\ul{\gam}_i|\ul{f}_i)\in\K^{\times}$;\\
{\rm (H2)} For all $2\leq i\leq n$, we have $\Delta_i\sigma_i=t^{\eta_i}\sigma_i\Delta_i$;\\
{\rm (H3)} For all $2\leq i\leq n$ and $k\geq0$, we have $\Delta_i^k(\mathcal{R}_{i-1})\subseteq(t-1)^k(k)!_{t^{\eta_i}}\mathcal{R}_{i-1}$;\\
{\rm (H4)} The automorphisms $\sigma_i$ satisfy $\sigma_i(x_j)=t^{\la_{ij}}x_j$ for $1\leq j<i\leq n$, where $\la_{ij}:=(\ul{\gam}_i|\ul{f}_j)$.

	Assume that $R:=\mathcal{R}/(t-1)\mathcal{R}$ is commutative. Then, for any $H$-invariant Poisson prime ideal $P$ of $R$, the field $\F R/P$ is Poisson isomorphic to a Poisson affine field.
\end{thm}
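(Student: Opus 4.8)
The plan is to verify that the Poisson $\K$-algebra $R=\mathcal{R}/(t-1)\mathcal{R}$, equipped with the torus action induced by the $H$-action on $\mathcal{R}$, satisfies all the hypotheses of Theorem \ref{all hypo}, and then to quote that theorem.

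First I would identify $R$ as an iterated Poisson-Ore extension of the required shape. Each $\mathcal{R}_{i-1}$ is a free, hence torsion-free, $\K[t^{\pm1}]$-module, and $\mathcal{R}_i/(t-1)\mathcal{R}_i$ is commutative, being a subalgebra of $R$; so Proposition \ref{SML}(1) applies at every stage and, by induction on $i$, gives $R=\K[X_1][X_2;\al_2,\de_2]_P\cdots[X_n;\al_n,\de_n]_P$ with $X_i=\ov{x_i}$, $\al_i=\frac{\sigma_i-\id}{t-1}\big|_{t=1}$ and $\de_i=\frac{\Delta_i}{t-1}\big|_{t=1}$ on $A_{i-1}:=\mathcal{R}_{i-1}/(t-1)\mathcal{R}_{i-1}$. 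Hypotheses (H2) and (H3) then let me invoke Proposition \ref{SML}(2): each $\de_i$ extends to an iterative higher $(\eta_i,\al_i)$-skew Poisson derivation $(D_{i,k})_{k\geq0}$ on $A_{i-1}$, explicitly $D_{i,k}(\ov{a})=\big(\frac{\Delta_i^k(a)}{(t-1)^k(k)!_{t^{\eta_i}}}\big)\big|_{t=1}$; this higher Poisson derivation is locally nilpotent, using that each $\Delta_i$ is (as is the case for the iterated Ore extensions of \cite[Section 5]{HH}). Here $\eta_i\in\K^\times$ by (H1).

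Next I would transfer the torus action and check the remaining compatibilities. Since $t$ is $H$-invariant, the ideal $(t-1)\mathcal{R}$ is $H$-stable, so the action descends to a rational action by Poisson automorphisms on $R$ for which each $X_i$ is an $H$-eigenvector of character $\ul{f}_i$; in particular $\al_i(X_j)=\la_{ij}X_j$ with $\la_{ij}=(\ul{\gam}_i|\ul{f}_j)$ is immediate from (H4). For the operator identities, observe that by rationality each $\mathcal{R}_{j-1}$ is the $\K[t^{\pm1}]$-span of its $H$-eigenvectors; using (H4) one checks that $\sigma_i$ acts on the weight-$\ul{\nu}$ eigenspace by the scalar $t^{(\ul{\gam}_i|\ul{\nu})}$, while the defining relation $x_ja=\sigma_j(a)x_j+\Delta_j(a)$ together with $x_j$ being an $H$-eigenvector of character $\ul{f}_j$ shows that $\Delta_j$ shifts the $H$-weight by $\ul{f}_j$. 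Combining these facts gives $\sigma_i\Delta_j^k=t^{k\la_{ij}}\Delta_j^k\sigma_i$ on $\mathcal{R}_{j-1}$ for $j<i$, and similarly $h\circ\Delta_i^k=\mu_i^k\,\Delta_i^k\circ h$ whenever $h\in H$ satisfies $h(x_i)=\mu_ix_i$. Dividing these identities by the relevant powers of $t-1$ and specialising at $t=1$ produces $\al_iD_{j,k}=D_{j,k}\al_i+k\la_{ij}D_{j,k}$ and $h\big(D_{i,k}(X_j)\big)=\mu_i^kD_{i,k}\big(h(X_j)\big)$ for $j<i$, which are exactly the hypothesis on $\al_iD_{j,k}$ and the $H$-equivariance hypothesis of Assertion (3) in Theorem \ref{abc}. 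Finally, Hypothesis \ref{hyp} holds with the same $\ul{\gam}_i$: one has $\la_{ij}=(\ul{\gam}_i|\ul{f}_j)$ for $j<i$ by (H4), and $\rho_i:=(\ul{\gam}_i|\ul{f}_i)=-\eta_i\in\K^\times$ by (H1).

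At this point $R$, together with its induced $H$-action, satisfies all the hypotheses of Theorem \ref{abc} as well as Hypothesis \ref{hyp}; since $\K$ contains an element that is not a root of unity, or is algebraically closed (the standing assumption of this section), Theorem \ref{all hypo} applies verbatim and yields, for every $H$-invariant Poisson prime ideal $P$ of $R$, a Poisson isomorphism between $\F R/P$ and a Poisson affine field. The bulk of the work lies in the third paragraph: one must carry out the ``$q$-commutation specialised at $t=1$'' bookkeeping carefully, tracking the $q$-integer factors $(k)!_{t^{\eta_i}}$ through the divisions by powers of $t-1$ and checking that the resulting scalars match the normalisations of Theorem \ref{abc} and Hypothesis \ref{hyp} --- in particular the sign linking $\eta_i=-(\ul{\gam}_i|\ul{f}_i)$ to $\rho_i=(\ul{\gam}_i|\ul{f}_i)$.
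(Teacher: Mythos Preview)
Your proposal is correct and follows the same overall architecture as the paper: verify, step by step, that $R$ with its induced $H$-action satisfies the hypotheses of Theorem~\ref{abc} together with Hypothesis~\ref{hyp}, and then invoke Theorem~\ref{all hypo}. The one methodological difference worth noting concerns the commutation identities $\sigma_i\Delta_j^k=t^{k\la_{ij}}\Delta_j^k\sigma_i$ and $h\circ\Delta_i^k=\mu_i^k\,\Delta_i^k\circ h$. The paper proves these by explicit induction on $k$, working on the generators $x_l$ via the recursion $\Delta_i^k(x_j)=x_i\Delta_i^{k-1}(x_j)-t^{\eta_i(1-k)+\la_{ij}}\Delta_i^{k-1}(x_j)x_i$. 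Your weight-space argument --- that $\sigma_i$ acts on the $H$-weight-$\ul{\nu}$ component of $\mathcal{R}_{j-1}$ as the scalar $t^{(\ul{\gam}_i|\ul{\nu})}$ while $\Delta_j$ raises $H$-weight by $\ul{f}_j$ --- is a more conceptual alternative that dispenses with both the induction and the recursion, and makes the appearance of $\la_{ij}=(\ul{\gam}_i|\ul{f}_j)$ transparent. Both routes arrive at the same identities and are then specialised at $t=1$ in the same way. You are also right to single out local nilpotency of each $\Delta_i$: it is needed for Theorem~\ref{abc} but is not among (H1)--(H4); the paper's proof uses it implicitly (via Proposition~\ref{SML}(2)), and it holds in all the intended examples from \cite[Section~5]{HH}.
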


\begin{proof}

	We only need to check that $R$ satisfies all hypotheses of Theorem \ref{all hypo}.
	
	$\bullet$ First, we show that $R$ is an iterated Poisson-Ore extension of the form
	\[R=\K[X_1][X_2;\al_2,\de_2]_P\cdots[X_n;\al_n,\de_n]_P,\]
where each $\de_i$ extends to an iterative higher $(\eta_i,\al_i)$-skew Poisson derivation $(D_{i,k})_{k=0}^{\infty}$ on $R_{i-1}:=\K[X_1][X_2;\al_2,\de_2]_P\cdots[X_{i-1};\al_{i-1},\de_{i-1}]_P$. This result is proved by induction on $n$ using Proposition \ref{SML}. The case $n=1$ is trivial. 

	For $1\leq i\leq n-1$, assume that $R_{i}=\K[X_1][X_2;\al_2,\de_2]_P\cdots[X_{i};\al_{i},\de_{i}]_P$. Then we have 
\begin{align*}
	R_{n}=\frac{\mathcal{R}_{n}}{(t-1)\mathcal{R}_{n}}&=\frac{\mathcal{R}_{n-1}}{(t-1)\mathcal{R}_{n-1}}[X_{n};\al_{n},\de_{n}]_P\\
	&=\K[X_1][X_2;\al_2,\de_2]_P\cdots[X_{n};\al_{n},\de_{n}]_P,
\end{align*}
since $(t-1)\mathcal{R}_{n-1}$ is a $(\sigma_{n},\Delta_{n})$-stable ideal of $\mathcal{R}_{n-1}$. Note that 
	\[\al_n(X_j)=\frac{\sigma_n(x_j)-x_j}{t-1}\Big|_{t=1}=\frac{t^{\la_{nj}}-1}{t-1}x_j\Big|_{t=1}=\la_{nj}X_j,\]
for all $1\leq j\leq n$.

	Hypotheses (H2) and (H3) ensure that Assertion 2 of Proposition \ref{SML} applies, so $\de_{n}$ extends to an iterative higher $(\eta_{n},\al_{n})$-skew Poisson derivation $(D_{n,k})_{k=0}^{\infty}$ on $R_{n-1}$. It follows from Proposition \ref{SML} (and the induction hypothesis) that for $2\leq j\leq n$ and $k\geq0$ we have
	\[D_{j,k}:=\frac{\Delta_j^k}{(t-1)^k(k)!_{t^{\eta_j}}}\Big|_{t=1}.\]
	
$\bullet$	The next step is to show that for $2\leq j<i\leq n$ and $k\geq0$, we have the relations $\al_iD_{j,k}=D_{j,k}\al_i+k\la_{ij}D_{j,k}$. 
		
First we show by induction (on $k$) the following identities:
\begin{eqnarray} 
\label{formuleDelta}
\sigma_i\Delta_j^k=t^{k\la_{ij}}\Delta_j^k\sigma_i,
\end{eqnarray}
for $2\leq j<i\leq n$. If $k=1$ and $1\leq l<j$, then we have
\begin{align*}
	\sigma_i(x_jx_l)=\sigma_i\big(\sigma_j(x_l)x_j+\Delta_j(x_l)\big)=t^{\la_{ij}+\la_{il}+\la_{jl}}x_ix_j+\sigma_i\Delta_j(x_l),
\end{align*}
and
\begin{align*}
	\sigma_i(x_j)\sigma_i(x_l)=t^{\la_{ij}+\la_{il}}\big(\sigma_j(x_l)+\Delta_j(x_l)\big)=t^{\la_{ij}+\la_{il}+\la_{jl}}x_ix_j+t^{\la_{ij}}\Delta_j\sigma_i(x_l).
\end{align*}
So $\sigma_i\Delta_j(x_l)=t^{\la_{ij}}\Delta_j\sigma_i(x_l)$ for all $1\leq l<j<i\leq n$, as desired. Assume the result proved at rank $k$. Then we have
\begin{align*}
	\sigma_i\Delta_j^{k+1}=(\sigma_i\Delta_j)\Delta_j^k=t^{\la_{ij}}\Delta_j\sigma_i\Delta_j^k=t^{(k+1)\la_{ij}}\Delta_j^{k+1}\sigma_i,
\end{align*}
and (\ref{formuleDelta}) is proved.

Now it follows from (\ref{formuleDelta}) that
	\[(\sigma_i-\id)\Delta_j^k=t^{k\la_{ij}}\Delta_j^k(\sigma_i-\id)+(t^{k\la_{ij}}-1)\Delta_j^k.\]
Next, dividing both sides of this equation by $(t-1)^{k+1}(k)!_{t^{\eta_j}}$ , and then projecting on $R_{j-1}$, we obtain:
	\[\al_iD_{j,k}=D_{j,k}\al_i+k\la_{ij}D_{j,k}.\]

$\bullet$	Then we show that the torus $H$ is acting rationally by Poisson automorphisms on $R$. Since $(t-1)\mathcal{R}$ is $H$-invariant, we can consider the induced action of $H$ on the quotient algebra $R$. This is a rational action by automorphisms. Moreover this action respects the Poisson bracket of $R$. Indeed for $f,g\in\mathcal{R}$, by setting $F=\ov{f}$ and $G=\ov{g}$, we have:
\begin{align*}
	h(\{F,G\})&=h\Big(\Big(\frac{[f,g]}{t-1}\Big)\Big|_{t=1}\Big)=\Big(h\Big(\frac{[f,g]}{t-1}\Big)\Big)\Big|_{t=1}\\
	&=\Big(\frac{[h(f),h(g)]}{t-1}\Big)\Big|_{t=1}=\{h(F),h(G)\}
\end{align*}
for all $h\in H$.
	
$\bullet$	Fix $h\in H$ and set $h(x_j)=\mu_jx_j$, where $\mu_j\in\K^{\times}$ for all $1\leq j\leq n$. We are now going to show that
	\[h\big(D_{i,k}(X_j)\big)=\mu_i^kD_{i,k}\big(h(X_j)\big)\]
for all $1\leq j<i\leq n$ and all $k\geq0$.

	We start by observing that, for $k\geq1$ and $1\leq j<i\leq n$, we have:
	\[x_i\Delta_i^{k-1}(x_j)=\sigma_i(\Delta_i^{k-1}(x_j))x_i+\Delta_i^k(x_j).\]
Thus
\begin{align*}
	\Delta_i^k(x_j)&=x_i\Delta_i^{k-1}(x_j)-\sigma_i(\Delta_i^{k-1}(x_j))x_i\\
	&=x_i\Delta_i^{k-1}(x_j)-t^{\eta_i(1-k)+\la_{ij}}\Delta_i^{k-1}(x_j)x_i.
\end{align*}
Then it follows from an easy induction (on $k$) that for all $h\in H$ and $k \geq 0$ we have 
\begin{eqnarray}
\label{eq:hDelta}
h(\Delta_i^k(x_j))=\mu_j\mu_i^k\Delta_i^k(x_j).
\end{eqnarray}
Indeed, when $k=1$, we have 
	\[h(\Delta_i(x_j))=h(x_ix_j-t^{\la_{ij}}x_jx_i)=\mu_i\mu_j\Delta_i(x_j).\]
Next, assuming the result proved at rank $(k-1)$ we get:
\begin{align*}
	h(\Delta_i^k(x_j))&=h(x_i\Delta_i^{k-1}(x_j)-t^{\eta_i(1-k)+\la_{ij}}\Delta_i^{k-1}(x_j)x_i)\\
	&=\mu_i x_i\mu_j\mu_i^{k-1}\Delta_i^{k-1}(x_j)-t^{\eta_i(1-k)+\la_{ij}}\mu_j\mu_i^{k-1}\Delta_i^{k-1}(x_j)\mu_ix_i\\
	&=\mu_j\mu_i^k\Delta_i^k(x_j),
\end{align*}
as desired. As $D_{j,k}:=\frac{\Delta_j^k}{(t-1)^k(k)!_{t^{\eta_j}}}\Big|_{t=1}$, we deduce from (\ref{eq:hDelta}) that 
	\[h(D_{i,k}(X_j))=\mu_i^kD_{i,k}\big(h(X_j)\big)\]
	for all $k\geq0$ and for all $1 \leq j<i\leq n$, as required.

$\bullet$	 We conclude by noting that Hypothesis \ref{hyp} is clearly satisfied with $\rho_i=-\eta_i=(\ul{\gam}_i|\ul{f}_i)$ for all $1\leq i\leq n$ since $X_i$ is an $H$-eigenvector with associated character $\ul{f}_i$ for all $1\leq i\leq n$. 

	Hence all hypothesis of Theorem \ref{all hypo} are satisfied and so for any $H$-invariant Poisson prime ideal $P$ of $R$, the field $\F R/P$ is Poisson isomorphic to a Poisson affine field.
\end{proof}

When dealing with examples, the following lemma allows us to check Hypothesis (H2) of Theorem \ref{fin} only on the generators of the algebra under consideration.

\begin{lem}
\label{generators}
	Let $\mathcal{A}$ be a finitely generated $\K[t^{\pm1}]$-algebra and form the Ore extension $\mathcal{R}=\mathcal{A}[x;\sigma,\Delta]$ with $\Delta\sigma=t^{\eta}\sigma\Delta$ for an integer $\eta\in\K^{\times}$. Let $\{a_1,\dots,a_n\}$ be a set of generators of $\mathcal{A}$. If the conditions $\Delta^i(a_k)\in (t-1)^i(i)!_{t^{\eta}}\mathcal{A}$ are satisfied for all $k\in\{1,\dots,n\}$ and $i \geq 0$, then	\[\Delta^i(\mathcal{A})\subseteq(t-1)^i(i)!_{t^{\eta}}\mathcal{A}.\]
\end{lem}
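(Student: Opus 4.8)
The plan is to reduce the assertion to monomials in the generators and to prove it by induction on monomial length, the engine being the $q$-skew Leibniz formula for $\Delta$. First I would observe that $\Delta^i$ is $\K[t^{\pm1}]$-linear and that $(t-1)^i(i)!_{t^{\eta}}\mathcal{A}$ is a $\K[t^{\pm1}]$-submodule of $\mathcal{A}$; since $\mathcal{A}$ is spanned over $\K[t^{\pm1}]$ by the ordered monomials $a_{j_1}a_{j_2}\cdots a_{j_m}$ in the generators, it suffices to show that $\Delta^i(u)\in(t-1)^i(i)!_{t^{\eta}}\mathcal{A}$ for every such monomial $u$ and every $i\geq0$.

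The key tool will be the $q$-Leibniz identity (with $q=t^{\eta}$): from $\Delta\sigma=t^{\eta}\sigma\Delta$ one obtains, by a straightforward induction as in the proof of Proposition \ref{SML}, that for all $u,v\in\mathcal{A}$ and all $n\geq0$
\[\Delta^n(uv)=\sum_{l=0}^n\binom{n}{l}_{q}\,\sigma^{n-l}\big(\Delta^l(u)\big)\,\Delta^{n-l}(v),\]
where each Gaussian binomial coefficient $\binom{n}{l}_q$ lies in $\K[t^{\pm1}]$ and $\binom{n}{l}_q(l)!_q(n-l)!_q=(n)!_q$. With this in hand I would induct on the length $m$ of $u=a_{j_1}\cdots a_{j_m}$: the cases $m\leq1$ are trivial (note $\Delta^i(1)=0$ for $i\geq1$) or are exactly the hypothesis, and for $m\geq2$ I would write $u=a_{j_1}w$ with $w=a_{j_2}\cdots a_{j_m}$ of length $m-1$ and expand
\[\Delta^i(u)=\sum_{l=0}^i\binom{i}{l}_q\,\sigma^{i-l}\big(\Delta^l(a_{j_1})\big)\,\Delta^{i-l}(w).\]
By the hypothesis on the generators, $\Delta^l(a_{j_1})\in(t-1)^l(l)!_q\mathcal{A}$, and this submodule is preserved by the $\K[t^{\pm1}]$-algebra automorphism $\sigma$; by the inductive hypothesis, $\Delta^{i-l}(w)\in(t-1)^{i-l}(i-l)!_q\mathcal{A}$. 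Multiplying, each summand lies in $\binom{i}{l}_q(t-1)^i(l)!_q(i-l)!_q\mathcal{A}=(t-1)^i(i)!_q\mathcal{A}$, hence so does $\Delta^i(u)$, which closes the induction.

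I do not expect a genuine obstacle here: once the $q$-Leibniz identity is available, the argument is routine bookkeeping. The only points needing a moment's care are that the submodule $(t-1)^i(i)!_{q}\mathcal{A}$ is stable under $\sigma$ (because $\sigma$ fixes $\K[t^{\pm1}]$ pointwise) and the combinatorial identity $\binom{i}{l}_q(l)!_q(i-l)!_q=(i)!_q$ together with the fact that $\binom{i}{l}_q\in\K[t^{\pm1}]$, which is precisely what ensures that the individual terms of the expansion land back in $\mathcal{A}$, and in fact in $(t-1)^i(i)!_q\mathcal{A}$.
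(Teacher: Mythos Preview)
Your proposal is correct and follows exactly the approach the paper intends: an induction on the length of monomials in the generators using the generalised quantum Leibniz formula $\Delta^i(ab)=\sum_{k=0}^i\binom{i}{k}_{t^{\eta}}\sigma^{i-k}\Delta^k(a)\Delta^{i-k}(b)$. The paper's proof is in fact just the one-line remark ``easy induction using the generalised quantum Leibniz formula,'' and you have supplied precisely the details (including the factorial identity $\binom{i}{l}_q(l)!_q(i-l)!_q=(i)!_q$ and the $\sigma$-stability of $(t-1)^i(i)!_q\mathcal{A}$) that make that induction go through.
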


\begin{proof}
Easy induction using the generalised quantum Leibniz formula:
	\[\Delta^i(ab)=\sum_{k=0}^i\binom{i}{k}_{t^{\eta}}\sigma^{i-k}\Delta^k(a)\Delta^{i-k}(b)\]
for $a,b\in\mathcal{A}$.
\end{proof}

In \cite[Section 5]{HH}, many iterated Ore extensions are described, and it is shown that lots of them actually satisfy the hypotheses of Theorem \ref{fin}. As a consequence, their semiclassical limits and their quotients by $H$-invariant Poisson prime ideals satisfy the quadratic Poisson Gel'fand-Kirillov problem. This includes (but is not limited to) the semiclassical limits of:
\begin{itemize}
	\item single parameter coordinate rings of odd-dimensional quantum Euclidean spaces;
	\item single parameter coordinate rings of quantum matrices;
	\item single parameter coordinate rings of even-dimensional quantum Euclidean spaces;
	\item single parameter coordinate rings of quantum symplectic spaces.
\end{itemize}
In the next section, we provide a detailed study of one of these families. More precisely, we will focus on the semiclassical limit of the coordinate rings of quantum matrices, and then on their quotients by  determinantal ideals.

\subsection{Semiclassical limit of the coordinate ring of $n\times n$ quantum matrices}

	The single parameter coordinate ring of quantum matrices $\mathcal{A}=\mathcal{O}_t\big(M_n(\K[t^{\pm1}])\big)$ is the $\K[t^{\pm1}]$-algebra given by $n^2$ generators $x_{11}, x_{12}, \dots,x_{nn}$ and relations
\begin{align*}
	x_{lm}x_{ij}=\left\{
    \begin{array}{llll}
        t^{-1}x_{ij}x_{lm} \qquad &l>i,\ m=j\\
         t^{-1}x_{ij}x_{lm} \qquad &l=i,\ m>j\\
         x_{ij}x_{lm} \qquad &l>i,\ m<j\\
         x_{ij}x_{lm}-(t-t^{-1})x_{im}x_{lj} \qquad &l>i,\ m>j.
    \end{array}\right.
\end{align*}
This algebra can also be presented as an iterated Ore extension over $\K[t^{\pm1}]$:
	\[\mathcal{O}_t\big(M_n(\K[t^{\pm1}])\big)=\K[t^{\pm1}][x_{11}][x_{12};\sigma_{12},\Delta_{12}]\cdots[x_{nn};\sigma_{nn},\Delta_{nn}],\]
where $\sigma_{lm}$ is the $\K[t^{\pm1}]$-automorphism of the appropriate subalgebra of $\mathcal{O}_t\big(M_n(\K[t^{\pm1}])\big)$ defined by 
\[\sigma_{lm}(x_{ij})= \left\{
			\begin{array}{lll}
				t^{-1}x_{ij} \qquad \qquad & \mbox{if } l>i \mbox{ and } m=j\\
				t^{-1}x_{ij}               & \mbox{if } l=i \mbox{ and } m>j\\
				x_{ij}                     & \mbox{if } l>i \mbox{ and } m\neq j,
			\end{array}
\right. 
\]
for all $(i,j)<_{\mbox{lex}}(l,m)$, and where $\Delta_{lm}$ is the $\K[t^{\pm1}]$-linear $\sigma_{lm}$-derivation such that 	
\[\Delta_{lm}(x_{ij})=\left\{
			\begin{array}{ll}
				-(t-t^{-1})x_{im}x_{lj} \ \ & \mbox{if } l>i \mbox{ and } m>j\\
				0 & \mbox{otherwise}
			\end{array}
\right.
\]
for all $(i,j)<_{\mbox{lex}}(l,m)$.

Observe that the torus $H=(\K^{\times})^{2n}$ acts rationally on $\mathcal{A}$ by automorphisms via:
	\[h(t)=t\ \mbox{ and }\ h(x_{ij})=h_ih_{n+j}x_{ij}\]
for all $1\leq i,j\leq n$. So $x_{ij}$ is an $H$-eigenvector with associated character 
	\[\ul{f}_{ij}=(0,\dots,0,1,0,\dots,0,1,0,\dots,0)\in\Z^{2n},\]
where the $1$s occur in $i$-th and $(n+j)$-th positions. For $1\leq l,m\leq n$, we define 
	\[\ul{\gam}_{lm}:=(1\dots,1,0,-1,\dots,-1,-2,-1,\dots,-1)\in\Z^{2n},\]
where the $0$ occurs in $l$-th position and the $(-2)$ in $(n+m)$-th position. We have $(\ul{\gam}_{lm}|\ul{f}_{lm})=-2$ for all $1\leq l,m\leq n$. To summarise, if $\car(\K)\neq2$, Hypothesis (H1) of Theorem \ref{fin} is satisfied. For $(i,j)<_{\mbox{lex}}(l,m)$ we have:
\begin{align*}
	(\ul{\gam}_{lm}|\ul{f}_{ij})=\left\{
					\begin{array}{ll}
					  -1\ \  &\mbox{if } l>i \mbox{ and } m=j\\
					  -1     &\mbox{if } l=i \mbox{ and } m>j\\
						0      &\mbox{if } l>i \mbox{ and } m\neq j.						
					\end{array}
		\right.
\end{align*}
Note that for all $(i,j)<_{\mbox{lex}}(l,m)$ we have $\sigma_{lm}(x_{ij})=t^{(\ul{\gam}_{lm}|\ul{f}_{ij})}x_{ij}$. Thus Hypothesis (H4) of Theorem \ref{fin} is satisfied.

	One can easily check that $\Delta_{lm}\sigma_{lm}=t^2\sigma_{lm}\Delta_{lm}$ for all $l,m$. Thus, Hypothesis (H2) of Theorem \ref{fin} is satisfied. Let $\mathcal{A}_{lm}$ be the subalgebra of $\mathcal{A}$ generated over $\K[t^{\pm1}]$ by $x_{11},x_{12}, \dots,x_{l,m-1}$. Note that  $\Delta_{lm}^k(x_{ij})=0$ for all $k\geq 2$ and
	\[\Delta_{lm}(x_{ij})= \left\{
			\begin{array}{ll}
				-(t-1)(t^{-1}+1)x_{im}x_{lj} \ \ & \mbox{if } l>i \mbox{ and } m>j\\
				0 & \mbox{otherwise.}
			\end{array}
		\right.\]
So we have $\Delta_{lm}^k(x_{ij})\in(t-1)^k(k)!_{t^2}\mathcal{A}_{lm}$ for all $(i,j)<_{\mbox{lex}}(l,m)$ and all $k\geq0$, and Hypothesis (H3) of Theorem \ref{fin} is satisfied thanks to Lemma \ref{generators}. So, if $\car\K\neq 2$, then we can apply Theorem \ref{fin} to $\mathcal{A}$. 

 Let $A=\mathcal{O}\big(M_n(\K)\big)=\mathcal{A}/(t-1)\mathcal{A}=\K[X_{11},\dots,X_{nn}]$ be the semiclassical limit of $\mathcal{A}$, where $X_{ij}=x_{ij}+(t-1)\mathcal{A}$. For  $(i,j)<_{\mbox{lex}}(l,m)$, the Poisson bracket on $A$ is given by:
	
	\[\{X_{lm},X_{ij}\}=\left\{
				\begin{array}{ll}
				-X_{ij}X_{lm}\ \ &\mbox{if } l>i \mbox{ and } m=j\\
				-X_{ij}X_{lm}    &\mbox{if } l=i \mbox{ and } m>j\\
				0                &\mbox{if } l>i \mbox{ and } m<j\\
				-2X_{im}X_{lj}    &\mbox{if } l>i \mbox{ and } m>j.
				\end{array}
	\right.
\] 
We deduce from the above discussion the following result.

\begin{thm}
\label{max}
Assume that $\car\K \neq 2$. Let $P$ be an $H$-invariant Poisson prime ideal of $A=\mathcal{O}\big( M_n(\K)\big)$. The field of fractions of $A/P$ is Poisson isomorphic to a Poisson affine field $\K_{\bo\mu}(Y_1,\dots,Y_m)$, where $m\leq n^2$ and $\bo\mu\in M_m(\K)$ is a skew-symmetric matrix.
\end{thm}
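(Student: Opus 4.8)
The plan is to apply Theorem \ref{fin} to the single parameter coordinate ring of quantum matrices $\mathcal{A}=\mathcal{O}_t\big(M_n(\K[t^{\pm1}])\big)$, whose semiclassical limit is exactly $A=\mathcal{O}\big(M_n(\K)\big)$. Since Theorem \ref{fin} concludes that $\F R/P$ is Poisson isomorphic to a Poisson affine field for any $H$-invariant Poisson prime ideal $P$, it suffices to verify that $\mathcal{A}$, with the indicated iterated Ore extension structure and the torus $H=(\K^{\times})^{2n}$, satisfies hypotheses (H1)--(H4). All of this verification has in fact been carried out in the discussion immediately preceding the statement of the theorem, so the proof is essentially a matter of assembling those observations.

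\begin{proof}
This is an immediate consequence of Theorem \ref{fin} applied to the iterated Ore extension $\mathcal{A}=\mathcal{O}_t\big(M_n(\K[t^{\pm1}])\big)=\K[t^{\pm1}][x_{11}][x_{12};\sigma_{12},\Delta_{12}]\cdots[x_{nn};\sigma_{nn},\Delta_{nn}]$. Indeed, $\mathcal{A}$ is a torsion free $\K[t^{\pm1}]$-algebra (it is free as a module over $\K[t^{\pm1}]$) and its semiclassical limit $\mathcal{A}/(t-1)\mathcal{A}$ is the commutative algebra $A=\mathcal{O}\big(M_n(\K)\big)$. We have checked above, using the action $h(x_{ij})=h_ih_{n+j}x_{ij}$ of $H=(\K^{\times})^{2n}$ and the vectors $\ul{\gam}_{lm}$, that $(\ul{\gam}_{lm}|\ul{f}_{lm})=-2$; hence $\eta_{lm}=2\in\K^{\times}$ precisely because $\car\K\neq2$, so Hypothesis (H1) holds. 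Hypothesis (H4) holds since $\sigma_{lm}(x_{ij})=t^{(\ul{\gam}_{lm}|\ul{f}_{ij})}x_{ij}$ for all $(i,j)<_{\mathrm{lex}}(l,m)$, as computed above. Hypothesis (H2) holds because $\Delta_{lm}\sigma_{lm}=t^2\sigma_{lm}\Delta_{lm}$ for all $l,m$. Finally, since $\Delta_{lm}^k(x_{ij})=0$ for $k\geq2$ and $\Delta_{lm}(x_{ij})\in(t-1)(1)!_{t^2}\mathcal{A}_{lm}$ (because $\Delta_{lm}(x_{ij})=-(t-1)(t^{-1}+1)x_{im}x_{lj}$ when $l>i$ and $m>j$, and is $0$ otherwise), Lemma \ref{generators} gives $\Delta_{lm}^k(\mathcal{R}_{lm-1})\subseteq(t-1)^k(k)!_{t^2}\mathcal{R}_{lm-1}$ for all $k\geq0$, which is Hypothesis (H3). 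Thus all hypotheses of Theorem \ref{fin} are met. Consequently, for any $H$-invariant Poisson prime ideal $P$ of $A$, the field $\F A/P$ is Poisson isomorphic to a Poisson affine field. The bound $m\leq n^2$ and the description of $\bo\mu$ as a skew-symmetric submatrix follow from Theorem \ref{all hypo}, since $A$ is a polynomial ring in $n^2$ indeterminates.
\end{proof}

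The only point requiring care --- and the reason the hypothesis $\car\K\neq2$ appears --- is that the scalar $\eta_{lm}=2$ must be invertible in $\K$; in characteristic $2$ the element $(\ul{\gam}_{lm}|\ul{f}_{lm})$ becomes $0$ in $\K$ and Hypothesis (H1) fails, so Theorem \ref{fin} cannot be invoked. There is no genuine obstacle in the argument beyond bookkeeping: all the structural computations (the iterated Ore presentation, the formulas for $\sigma_{lm}$ and $\Delta_{lm}$, the commutation relation $\Delta_{lm}\sigma_{lm}=t^2\sigma_{lm}\Delta_{lm}$, and the divisibility $\Delta_{lm}^k(x_{ij})\in(t-1)^k(k)!_{t^2}\mathcal{A}$) are standard facts about quantum matrices and have already been recorded in the text preceding the theorem.
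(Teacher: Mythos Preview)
Your proposal is correct and follows exactly the same approach as the paper: the paper does not give a separate proof of Theorem~\ref{max} but simply states ``We deduce from the above discussion the following result,'' where the preceding discussion verifies hypotheses (H1)--(H4) of Theorem~\ref{fin} for $\mathcal{A}=\mathcal{O}_t\big(M_n(\K[t^{\pm1}])\big)$ in precisely the way you describe. Your write-up is a faithful and slightly more explicit assembly of those observations.
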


Note that when $\car\K=2$, our methods do not apply to $A$. However in this case $A$ is already a Poisson affine space and the quadratic Poisson Gel'fand-Kirillov problem is trivial.

\subsection{Quotients by Determinantal ideals}
	
	Assume that $\car\K \neq 2$. Determinantal ideals are ideals of $A=\mathcal{O}\big(M_n(\K)\big)$ generated by minors of a given size. More precisely, let $I$ and $J$ be subsets of $\{1,\dots,n\}$ with $|I|=|J|$. We denote by $[I|J]$ the determinant
	\[ [I|J]:=\det\Big((X_{ij})_{(i,j)\in I\times J}\Big).\]
	Such a determinant is called a \textit{minor of size $|I|$}. For all $k\in\{0,\dots,n-1\}$, the \textit{determinantal ideal} $\mathcal{P}_k$ is the ideal generated by all $(k+1)\times(k+1)$ minors of $A$. Note that $\mathcal{P}_k$ contains all minors of size bigger than $k+1$ by Laplace Expansion.
	
	 Fix $0\leq k\leq n-1$. We claim that the Poisson field $\F(A/\mathcal{P}_k)$ is Poisson isomorphic to a Poisson affine field. For, we just need to show that $\mathcal{P}_k$ is an $H$-invariant Poisson prime ideal by Theorem \ref{max}. First, it is well known that $\mathcal{P}_k$ is a prime ideal, see for instance \cite[Theorem 6.3]{detring}. Moreover, $\mathcal{P}_k$ is clearly $H$-invariant, so to apply Theorem \ref{max} to $A/\mathcal{P}_k$, it only remains to prove that $\mathcal{P}_k$ is a Poisson ideal. It is probably well known, but we have not been able to find the statement in the literature. The following lemma (re-)establishes this result.
	
\begin{lem}
\label{Poisson}
For all $k\in\{0,\dots,n-1\}$, the ideal $\mathcal{P}_k$ is a Poisson ideal of $A$. 
\end{lem}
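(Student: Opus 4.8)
The plan is to lift the statement to the quantum level and then transport it through the semiclassical limit. Let $\widehat{\mathcal{P}}_k$ denote the two-sided ideal of $\mathcal{A}=\mathcal{O}_t\big(M_n(\K[t^{\pm1}])\big)$ generated by the $(k+1)\times(k+1)$ quantum minors. By the classical theory of quantum determinantal ideals, $\widehat{\mathcal{P}}_k$ is completely prime, so $\mathcal{A}/\widehat{\mathcal{P}}_k$ is a domain; since $\widehat{\mathcal{P}}_k$ is a graded ideal with no degree-zero component, $t-1\notin\widehat{\mathcal{P}}_k$, hence $t-1$ is a non-zero-divisor modulo $\widehat{\mathcal{P}}_k$, that is $\widehat{\mathcal{P}}_k\cap(t-1)\mathcal{A}=(t-1)\widehat{\mathcal{P}}_k$. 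Moreover, each $(k+1)\times(k+1)$ quantum minor specialises at $t=1$ to the ordinary $(k+1)\times(k+1)$ minor, so the canonical surjection $\mathcal{A}\to A$ carries $\widehat{\mathcal{P}}_k$ onto $\mathcal{P}_k$.

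Now I would take $a\in\mathcal{A}$ and $g\in\widehat{\mathcal{P}}_k$. Since $\widehat{\mathcal{P}}_k$ is two-sided, $[a,g]\in\widehat{\mathcal{P}}_k$; since $A$ is commutative, $[a,g]\in(t-1)\mathcal{A}$. Hence $[a,g]\in\widehat{\mathcal{P}}_k\cap(t-1)\mathcal{A}=(t-1)\widehat{\mathcal{P}}_k$, so $\frac{[a,g]}{t-1}\in\widehat{\mathcal{P}}_k$, and therefore $\{\ov a,\ov g\}=\frac{[a,g]}{t-1}\Big|_{t=1}$ lies in the image of $\widehat{\mathcal{P}}_k$ in $A$, which is $\mathcal{P}_k$. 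Finally, given an arbitrary $\ov a\in A$ and $\ov f\in\mathcal{P}_k$, I lift $\ov a$ to some $a\in\mathcal{A}$ and, using that $\widehat{\mathcal{P}}_k$ maps onto $\mathcal{P}_k$, choose $g\in\widehat{\mathcal{P}}_k$ with $\ov g=\ov f$; then $\{\ov a,\ov f\}=\{\ov a,\ov g\}\in\mathcal{P}_k$. Thus $\mathcal{P}_k$ is a Poisson ideal.

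The step I expect to need the most care is the $(t-1)$-saturation of $\widehat{\mathcal{P}}_k$, which I would deduce from complete primeness of the quantum determinantal ideal (alternatively, from a standard-bitableau basis showing that $\mathcal{A}/\widehat{\mathcal{P}}_k$ is $\K[t^{\pm1}]$-free). If one prefers to avoid the quantum input altogether, the lemma can instead be proved directly: by the Leibniz rule and bilinearity it suffices to show $\{X_{lm},[I|J]\}\in\mathcal{P}_k$ for each generator $X_{lm}$ and each $(k+1)\times(k+1)$ minor $[I|J]$, and by expanding $[I|J]$ and applying the explicit brackets between the $X_{ij}$ one checks, distinguishing the cases $l\in I$, $m\in J$, and $l\notin I,\ m\notin J$ (where additionally a $(k+2)$-minor term appears), that $\{X_{lm},[I|J]\}$ is a $\K$-linear combination of products of generators with minors of size $\ge k+1$ — all of which lie in $\mathcal{P}_k$, since $\mathcal{P}_k$ coincides with the ideal generated by all minors of size $\ge k+1$. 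This route is elementary but markedly more computational.
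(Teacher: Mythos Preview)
Your main argument is correct and takes a genuinely different route from the paper. The paper proceeds by quoting explicit commutation relations between quantum minors and generators from \cite[Lemma~5.1]{GoLe} and passing to the semiclassical limit, which yields closed formulas for $\{X_{rc},[I|J]\}$ in each of the four cases ($r\in I$ or not, $c\in J$ or not); in every case the right-hand side is a $\K$-linear combination of products of a generator with a minor \emph{of the same size} $|I|$, so membership in $\mathcal{P}_k$ is immediate. By contrast, you avoid explicit formulas entirely and argue structurally: the quantum determinantal ideal $\widehat{\mathcal{P}}_k$ is two-sided, hence closed under commutators, and the $(t-1)$-saturation lets you divide and specialise. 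Your approach is cleaner and would apply verbatim to any classical ideal that lifts to a $(t-1)$-saturated two-sided ideal of the quantisation; the paper's approach, on the other hand, produces explicit bracket formulas that are of independent interest.

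One point of care you already flag: complete primeness of $\widehat{\mathcal{P}}_k$ in \cite{GoLe} is proved over a field (with $q$ not a root of unity), not over $\K[t^{\pm1}]$, so invoking it directly is a slight abuse. Your alternative justification via the standard bitableau basis---showing $\mathcal{A}/\widehat{\mathcal{P}}_k$ is free over $\K[t^{\pm1}]$, hence $(t-1)$ is regular---is the robust route and is what actually underlies the Goodearl--Lenagan primeness proof. A small inaccuracy in your sketch of the direct computational alternative: in the case $l\notin I$, $m\notin J$ no minor of size $k+2$ appears; the paper's formula in that case is again a combination of size-$|I|$ minors times generators.
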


\begin{proof}
	Note that any minor of $A$ is the coset of a so-called \textit{quantum minor} of $\mathcal{A}$. See \cite[Introduction]{GoLe} for more details about quantum minors. In \cite[Lemma \textbf{5.1}]{GoLe} the authors give commutation relations between quantum minors and generators of $\mathcal{A}$ which easily lead (by semiclassical limit) to the following Poisson brackets between minors and generators of $A$. Let $r,c\in\{1,\dots,n\}$ and $I,J\subseteq\{1,\dots,n\}$ with $|I|=|J|\geq1$. For $1\leq i<j\leq n$, we define $[i,j]:=\{i,i+1,\dots,j\}$.
\begin{itemize}
	\item If $r\in I$ and $c\in J$, then $\big\{X_{rc},[I|J]\big\}=0$.
	\item If $r\in I$ and $c\notin J$, then
	\[\big\{X_{rc},[I|J]\big\}=-[I|J]X_{rc}-2\sum_{j\in J,j>c}(-1)^{-|J\cap[c,j]|}[I|J\cup\{c\}\setminus\{j\}]X_{rj}.\]
	\item If $r\notin I$ and $c\in J$, then
	\[\big\{X_{rc},[I|J]\big\}=[I|J]X_{rc}+2\sum_{i\in I,i<r}(-1)^{-|I\cap[i,r]|}[I\cup\{r\}\setminus\{i\}|J]X_{ic}.\]
	\item If $r\notin I$ and $c\notin J$, then
\begin{align*}
	\big\{X_{rc},[I|J]\big\}&=2\sum_{i\in I,i<r}(-1)^{-|I\cap[i,r]|}[I\cup\{r\}\setminus\{i\}|J]X_{ic}\\
									&-2\sum_{j\in J,j>c}(-1)^{-|J\cap[c,j]|}[I|J\cup\{c\}\setminus\{j\}]X_{rj}.
\end{align*}
\end{itemize}
Hence $\big\{X_{rc},[I|J]\big\}\in\mathcal{P}_k$ for all $[I|J]\in\mathcal{P}_k$ and all $1\leq r,c\leq n$.
\end{proof}

We are ready to conclude by the following result.

\begin{thm}
	Let $0\leq k\leq n-1$. The field of fractions $\F A/\mathcal{P}_k$ is Poisson isomorphic to a Poisson affine field $\K_{\bo\mu}(Y_1,\dots,Y_m)$, where $m\leq n^2$ and $\bo\mu\in M_m(\K)$ is a skew-symmetric matrix.
\end{thm}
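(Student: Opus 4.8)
The plan is to recognise this statement as an immediate application of Theorem \ref{max} to the quotient $A/\mathcal{P}_k$, so the only work is to verify that $\mathcal{P}_k$ is an $H$-invariant Poisson prime ideal of $A=\mathcal{O}\big(M_n(\K)\big)$. First I would recall that $\mathcal{P}_k$ is a prime ideal: this is classical and is recorded in the literature (for instance \cite[Theorem 6.3]{detring}), so I would simply cite it. Second, $\mathcal{P}_k$ is $H$-invariant: since each generator $[I|J]$ of $\mathcal{P}_k$ (a minor of size $k+1$) is an $H$-eigenvector, the action of any $h\in H$ permutes (up to scalars) a generating set of $\mathcal{P}_k$, hence $h(\mathcal{P}_k)=\mathcal{P}_k$. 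Third, and this is the substantive point that has already been settled in the excerpt, Lemma \ref{Poisson} shows that $\mathcal{P}_k$ is a Poisson ideal.

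With these three facts in hand, $\mathcal{P}_k$ is an $H$-invariant Poisson prime ideal of $A$, and Theorem \ref{max} applies verbatim (recall that $\car\K\neq2$ is in force throughout this subsection). It yields a Poisson isomorphism between $\F(A/\mathcal{P}_k)$ and a Poisson affine field $\K_{\bo\mu}(Y_1,\dots,Y_m)$ with $m\leq n^2$ and $\bo\mu\in M_m(\K)$ skew-symmetric, which is exactly the assertion to be proved.

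I do not anticipate any real obstacle here: all the genuine content has been front-loaded into Lemma \ref{Poisson} (the Poisson-bracket formulas between minors and generators, obtained by taking the semiclassical limit of the quantum-minor commutation relations of \cite[Lemma 5.1]{GoLe}) and into Theorem \ref{max} (itself a specialisation of Theorem \ref{all hypo}/Theorem \ref{fin}). The proof is therefore just the assembly of these pieces; the one mild point worth stating explicitly in the write-up is the $H$-invariance of $\mathcal{P}_k$, which follows because the defining minors are $H$-eigenvectors.
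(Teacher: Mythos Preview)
Your proposal is correct and matches the paper's own argument essentially verbatim: the paper also reduces the theorem to checking that $\mathcal{P}_k$ is an $H$-invariant Poisson prime ideal (primeness via \cite[Theorem 6.3]{detring}, $H$-invariance as ``clear'', Poisson via Lemma \ref{Poisson}) and then invokes Theorem \ref{max}. There is nothing to add.
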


\end{document}